\numberwithin{equation}{section}
\theoremstyle{plain}
\newtheorem{theorem}{Theorem}[section]
\newtheorem{proposition}[theorem]{Proposition}
\newtheorem{lemma}[theorem]{Lemma}
\newtheorem{corollary}[theorem]{Corollary}
\theoremstyle{definition}
\newtheorem{remark}[theorem]{Remark}
\newtheorem{example}[theorem]{Example}
\newcommand{\C}{\mathbb{C}}
\newcommand{\R}{\mathbb{R}}
\newcommand{\bD}{\mathbb{D}}
\newcommand{\cE}{\mathcal{E}}
\newcommand{\cK}{\mathcal{K}}
\newcommand{\cO}{\mathcal{O}}
\newcommand{\coloneq}{\mathrel{\mathop:}=}
\newcommand{\conj}[1]{\overline{#1}}
\newcommand{\comp}[1]{#1^c}
\DeclarePairedDelimiter{\abs}{\lvert}{\rvert}
\DeclarePairedDelimiter{\cc}{[}{]}
\DeclarePairedDelimiter{\co}{[}{[}
\DeclarePairedDelimiter{\oo}{]}{[}
\DeclareMathOperator{\im}{Im}
\DeclareMathOperator{\exterior}{ext}
\DeclareMathOperator{\capacity}{cap}
\DeclareMathOperator{\sgn}{sgn}
\DeclareMathOperator{\trace}{trace}
\DeclareMathOperator{\wind}{wind}
\DeclareMathOperator{\Ri}{\mathcal{R}}
\title{Walsh's conformal map onto lemniscatic domains for polynomial 
pre-images I}
\author{Klaus Schiefermayr\footnotemark[1] \and Olivier 
S\`{e}te\footnotemark[2]}
\date{August 3, 2022}
\begin{document}
\maketitle

\renewcommand{\thefootnote}{\fnsymbol{footnote}}

\footnotetext[1]{University of Applied Sciences Upper Austria, Campus Wels, 
Austria, \\ \texttt{klaus.schiefermayr@fh-wels.at}}

\footnotetext[2]{Institute of Mathematics and Computer Science, Universit\"at 
Greifswald, Walther-Rathenau-Stra\ss{}e~47, 17489 Greifswald, Germany.
\texttt{olivier.sete@uni-greifswald.de}.
ORCID: 0000-0003-3107-3053}

\renewcommand{\thefootnote}{\arabic{footnote}}

\begin{abstract}
We consider Walsh's conformal map from the exterior of a compact set $E 
\subseteq \C$ onto a lemniscatic domain.  If $E$ is simply connected, the 
lemniscatic domain is the exterior of a circle, while if $E$ has several 
components, the lemniscatic domain is the exterior of a generalized lemniscate 
and is determined by the logarithmic capacity of $E$ and by the \emph{exponents} 
and \emph{centers} of the generalized lemniscate.
For general $E$, we characterize the exponents in terms of the Green's function 
of $\comp{E}$.  Under additional symmetry conditions on $E$, we also locate the 
centers of the lemniscatic domain.
For polynomial pre-images $E = P^{-1}(\Omega)$ of a simply-connected infinite 
compact set $\Omega$, we explicitly determine the exponents in the lemniscatic 
domain and derive a set of equations to determine the centers of the 
lemniscatic domain.
Finally, we present several examples where we explicitly obtain the exponents 
and centers of the lemniscatic domain, as well as the conformal map.
\end{abstract}

\paragraph*{Keywords:}
conformal map, lemniscatic domain, multiply connected domain, polynomial 
pre-image, Green's function, logarithmic capacity

\paragraph*{AMS Subject Classification (2020):}
30C35; 
30C20. 

\section{Introduction}

The famous Riemann mapping theorem says that for any simply connected, compact 
and infinite set $E$ there exists a conformal map $\Ri_E : \comp{E} \coloneq 
\widehat{\C} \setminus E \to \comp{\overline{\bD}}$, where 
$\widehat{\C} = \C \cup \{ \infty \}$ denotes the extended complex plane,
$\bD$ the open and $\overline{\bD}$ the closed unit disk.
By imposing the normalization $\Ri_E(z) = \frac{z}{\capacity(E)} + \cO(1)$ as 
$z \to \infty$, where $\capacity(E)$ denotes the \emph{logarithmic capacity} of 
$E$, this map is unique.
In his 1956 article~\cite{Walsh1956}, J.\,L.~Walsh found the following 
canonical generalization for multiply connected domains.

\begin{theorem} \label{thm:walsh_map}
Let $E_1, \ldots, E_\ell \subseteq \C$ be disjoint simply connected, infinite 
compact sets and let
\begin{equation}
E = \bigcup_{j=1}^\ell E_j.
\end{equation}
In particular, $\comp{E} = \widehat{\C} \setminus E$ is an $\ell$-connected 
domain.
Then there exists a unique compact set of the form
\begin{equation} \label{eqn:lemniscatic_domain}
L \coloneq \{ w \in \C : \abs{U(w)} \leq \capacity(E) \}, \quad
U(w) \coloneq \prod_{j=1}^\ell (w-a_j)^{m_j},
\end{equation}
where $a_1, \ldots, a_\ell \in \C$ are distinct and $m_1, \ldots, m_\ell > 0$ 
are real numbers with $\sum_{j=1}^\ell m_j = 1$,
and a unique conformal map
\begin{equation} \label{eqn:Phi}
\Phi : \comp{E} \to \comp{L}
\end{equation}
normalized by
\begin{equation} \label{eqn:Phi_at_infinity}
\Phi(z) = z + \cO(1/z) \quad \text{at } \infty.
\end{equation}
If $E$ is bounded by Jordan curves, then $\Phi$ extends to a homeomorphism 
from $\overline{\comp{E}}$ to $\overline{\comp{L}}$.
\end{theorem}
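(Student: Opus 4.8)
The plan is to reduce the entire statement to the Green's function $g \coloneq g_{\comp{E}}(\cdot, \infty)$ of $\comp{E}$ with pole at infinity, which exists because each $E_j$ is a nondegenerate continuum and hence $\comp{E}$ is regular for the Dirichlet problem. Near infinity $g(z) = \log\abs{z} - \log\capacity(E) + o(1)$, which already forces $\capacity(L) = \capacity(E)$ for any admissible target. I would then \emph{define} the exponents intrinsically as the normalized fluxes $m_j \coloneq \frac{1}{2\pi}\oint_{\partial E_j}\partial_n g\,ds$. Since $g > 0$ in $\comp{E}$ and $g = 0$ on $\partial E$, the Hopf lemma gives $m_j > 0$, and comparing the flux of $g$ through a large circle (which tends to $2\pi$) with $\sum_j \oint_{\partial E_j}\partial_n g\,ds$ via the divergence theorem yields $\sum_{j=1}^\ell m_j = 1$. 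This settles the constraints on the $m_j$ and shows them to be determined by $\comp{E}$ alone.

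For the map I would pass to the (multivalued) complex Green's function $G = g + i\tilde g$, whose only obstruction to single-valuedness is the additive monodromy $2\pi i\,m_j$ acquired on looping around $\partial E_j$ (the monodromy around $\infty$ is $2\pi i \sum_j m_j = 2\pi i$, hence trivial after exponentiation). Thus $\Theta \coloneq \capacity(E)\,e^{G}$ has multiplicative monodromy $e^{2\pi i m_j}$ and $\Theta(z) = z + \cO(1)$ at infinity. The goal is to produce distinct centers $a_1, \ldots, a_\ell$ and a univalent $\Phi$ with $\Phi(z) = z + \cO(1/z)$ satisfying $\prod_j (\Phi(z) - a_j)^{m_j} = \Theta(z)$, equivalently $\sum_j m_j \log(\Phi(z) - a_j) = G(z) + \log\capacity(E)$: looping $z$ once around $\partial E_j$ must carry $\Phi(z)$ once around $a_j$, so the left-hand side acquires exactly the matching increment $2\pi i m_j$ and the identity becomes globally consistent. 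This is the crux and the \textbf{main obstacle}: the centers and the map are coupled nonlinearly, and one must prove that the relation genuinely defines a single-valued, univalent $\Phi$. I would resolve it by a continuity argument anchored at $\ell = 1$, where $U(w) = w - a_1$, $L$ is a disk of radius $\capacity(E)$, and $\Phi$ is the classical normalized exterior map; deforming the configuration of the $E_j$ within the $\ell$-connected class, one continues the solution by the implicit function theorem, using $m_j > 0$ and $\sum m_j = 1$ as a priori bounds to exclude collision of centers. A useful consistency check is that the weighted centroid $\sum_j m_j a_j$ is pinned down by the next Laurent coefficient of $G$ at infinity.

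Uniqueness I would extract from conformal invariance of the Green's function. If $\Phi_1, \Phi_2$ are two solutions with data $(U_i, a_j^{(i)}, m_j^{(i)})$, then $g_{\comp{L_i}}(w) = \log\abs{U_i(w)} - \log\capacity(E)$ and $g_{\comp{L_i}}(\Phi_i(z)) = g(z)$; matching fluxes through $\partial E_j$ identifies $m_j^{(i)}$ with the intrinsic $m_j$, so the exponents agree. Setting $\Psi = \Phi_2 \circ \Phi_1^{-1} : \comp{L_1} \to \comp{L_2}$, which is conformal with $\Psi(w) = w + \cO(1/w)$, invariance gives $\abs{U_2(\Psi(w))} = \abs{U_1(w)}$. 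Hence $\log U_2(\Psi(w)) - \log U_1(w)$ is a multivalued analytic function with identically vanishing real part and matching $2\pi i m_j$ periods, so it is an imaginary constant $ic$; comparing leading terms at infinity forces $e^{ic} = 1$, i.e. $U_2 \circ \Psi = U_1$. It then remains to deduce $\Psi = \mathrm{id}$: since the boundary ovals of $L_1$ are real-analytic level curves of $\abs{U_1}$ away from its finitely many critical points, $\Psi$ continues across them by Schwarz reflection, extending to a global rational map tangent to the identity at infinity, necessarily a M\"obius transformation and hence the identity. Therefore $L_1 = L_2$, $U_1 = U_2$, and the centers coincide.

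Finally, when $E$ is bounded by Jordan curves, the homeomorphic extension of $\Phi$ to $\overline{\comp{E}}$ follows from the Carath\'eodory extension theorem applied across each boundary component: locally $\Phi$ is a conformal map between Jordan domains, which extends to a homeomorphism of the closures, and these local extensions agree on overlaps and glue to the asserted global homeomorphism $\overline{\comp{E}} \to \overline{\comp{L}}$.
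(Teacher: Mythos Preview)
The paper does not prove Theorem~\ref{thm:walsh_map}; it is quoted as a result of Walsh (1956), with alternative existence proofs attributed to Grunsky, Jenkins, and Landau. So there is no ``paper's own proof'' to compare against. What the paper \emph{does} prove later (Theorem~\ref{thm:mj_by_integration}) is precisely your identification of the $m_j$ as the periods $\frac{1}{2\pi i}\oint_{\gamma_j} 2\partial_z g_E\,dz$, so that part of your sketch is on solid ground and matches the paper's perspective.

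Your existence argument, however, has a genuine gap. You propose a continuity method ``anchored at $\ell = 1$'' and then speak of ``deforming the configuration of the $E_j$ within the $\ell$-connected class''. These are incompatible: one cannot pass continuously from a simply connected exterior ($\ell=1$) to an $\ell$-connected one while remaining in a fixed connectivity class, so the anchor does not launch the deformation. If instead you meant to start from a canonical $\ell$-component configuration (say $\ell$ well-separated disks), that base case itself requires a proof, and the continuity method then still needs (i) a precise moduli space of admissible $E$'s that is connected, (ii) openness of the solution set via a genuine implicit function theorem (for which one must identify the correct Banach-space setting and verify invertibility of the linearization), and (iii) closedness via a priori bounds strong enough to preclude not only collision of centers but also degeneration of $\Phi$. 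None of this is supplied, and the invocation of ``$m_j>0$ and $\sum m_j=1$ as a priori bounds'' does not by itself prevent centers from colliding along a path. The published proofs (Walsh via circular slit domains and harmonic measures, Grunsky via extremal problems) avoid this machinery entirely.

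Your uniqueness argument also breaks at the last step. Up to and including $U_2\circ\Psi = U_1$ the reasoning is sound. But the claim that Schwarz reflection across $\partial L_1$ extends $\Psi$ to a global rational map is not justified: reflection across the real-analytic lemniscate gives a local anti-conformal extension into the interior of $L_1$, yet the interior contains the points $a_j^{(1)}$, around which $\log\abs{U_1}$ has singularities with \emph{non-integer} weights $m_j$. There is no reason the reflected branches glue to a single-valued meromorphic function on $\widehat{\C}$, so the conclusion ``necessarily a M\"obius transformation'' is unsupported. A cleaner route is to use the identity $U_2\circ\Psi = U_1$ together with $\Psi(w) = w + \cO(1/w)$ to compare the single-valued logarithmic derivatives $\sum_j m_j/(\,\cdot - a_j^{(i)})$ directly and argue from there, or to invoke the uniqueness of the harmonic measures determining the $a_j$; but as written the final step is a gap.
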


\begin{remark}
\begin{enumerate}
\item 
By assumption, each $E_j$ satisfies $\capacity(E_j) > 0$ hence 
$\capacity(E) > 0$.

\item The points $a_1, \ldots, a_\ell$ (sometimes called `centers' of $L$) and 
also $m_1, \ldots, m_\ell$ in Theorem~\ref{thm:walsh_map} are uniquely 
determined.
The function $U$ is analytic in $\C \setminus \{ a_1, \ldots, a_\ell \}$ and
in general not single-valued, but its absolute value is single-valued.
Note that the compact set $L$, defined in~\eqref{eqn:lemniscatic_domain}, 
consists of $\ell$ disjoint compact components $L_1, \ldots, L_\ell$,
with $a_j \in L_j$ for $j = 1, \ldots, \ell$.
The components $L_1, \ldots, L_\ell$ are labeled such that a Jordan curve 
surrounding $E_j$ is mapped by $\Phi$ onto a Jordan curve surrounding $L_j$.

\item If $E$ is simply connected then the exterior Riemann map $\Ri_E : 
\comp{E} \to \comp{\overline{\bD}}$ with 
$\Ri_E(z) = d_1 z + d_0 + \cO(1/z)$ at $\infty$ and $d_1 = \Ri_E'(\infty) > 0$
and the Walsh map $\Phi$ are related by $\Ri_E(z) = d_1 \Phi(z) + d_0$, 
which follows from~\cite[Thm.~4]{Walsh1956}.
The corresponding lemniscatic domain is the disk $L = \{ w \in \C : \abs{w - 
a_1} \leq \capacity(E) \}$, where $a_1 = - d_0/d_1$ and $\capacity(E) = 1/d_1$.
This shows that Walsh's map onto lemniscatic domains is a canonical 
generalization of the Riemann map from simply to multiply connected domains.

\item The existence in Theorem~\ref{thm:walsh_map} was first shown 
by Walsh; see~\cite[Thm.~3]{Walsh1956} and the discussion below.
Other existence proofs were given by Grunsky~\cite{Grunsky1957a}, 
\cite{Grunsky1957b}, and~\cite[Thm.~3.8.3]{Grunsky1978}, 
and also by Jenkins~\cite{Jenkins1958} and Landau~\cite{Landau1961}.
However, these articles do not contain any analytic or numerical examples.
The first analytic examples were constructed by S\`ete and Liesen 
in~\cite{SeteLiesen2016}, and, subsequently,
a numerical method for computing the Walsh map was derived 
in~\cite{NasserLiesenSete2016} for sets bounded by smooth Jordan curves.

\item The domain $\comp{L}$ is usually called a \emph{lemniscatic domain}.  
This term seems to originate in Grunsky~\cite[p.~106]{Grunsky1978}.
\end{enumerate}
\end{remark}

In this paper, we bring some light into the computation of the parameters $m_j$ 
and $a_j$ appearing in Theorem~\ref{thm:walsh_map}.

In Section~\ref{sect:general}, as a first main result, we derive a general 
formula (Theorem~\ref{thm:mj_by_integration}) for the exponents $m_j$ in terms 
of the Green's function of $\comp{E}$, denoted by $g_E$.
Of special interest is of course the case when $E$ is real or when $E$ or some 
component $E_j$ are symmetric with respect to the real line, i.e., $E^* = E$ or 
$E_j^* = E_j$, where
\begin{equation} \label{eqn:complex_conjugate_set}
K^* \coloneq \{ z \in \widehat{\C} : \conj{z} \in K \}
\end{equation}
denotes the complex conjugate of a set $K \subseteq \widehat{\C}$.
We prove that $E^* = E$ and $E_j^* = E_j$ implies that $a_j \in \R$ 
(Theorem~\ref{thm:aj_real}).  In the 
case that all components are symmetric, we give an interlacing property of the 
components $E_j$ and the critical points of $g_E$ 
(Theorem~\ref{thm:crit_pts_of_gE}).

In Section~\ref{sect:poly_preimages}, we consider the case when $E$ is a 
polynomial pre-image of a simply connected compact infinite set $\Omega$, that 
is, $E = P_n^{-1}(\Omega)$.  In this case, we prove in 
Theorem~\ref{thm:exponents_poly_preimage} that the $m_j$ are always 
rational of the form $m_j = n_j/n$, where $n$ is the degree of the polynomial 
$P_n$ and $n_j$ is the number of zeros of $P_n(z) - \omega$ in $E_j$, where 
$\omega \in \Omega$.
Moreover, the unknowns $a_1, \ldots, a_\ell$ are characterized by a system of 
equations which in particular can be solved explicitly in the case $\ell = 2$.
With the help of these findings, we obtain an analytic expression for the map 
$\Phi$ if $P_n^{-1}(\Omega)$ is connected 
(Corrolary~\ref{cor:connected_pre-image}).

Finally, Section~\ref{sect:examples} contains several illustrative 
examples when $E = P_n^{-1}(\Omega)$ and when $\Omega = \overline{\bD}$, 
$\Omega = \cc{-1, 1}$ or when $\Omega$ is a Chebyshev ellipse.
In particular, we determine the exponents and centers of the 
corresponding lemniscatic domain and visualize the conformal map $\Phi$.

\section{Results for general compact sets}
\label{sect:general}

Let the notation be as in Theorem~\ref{thm:walsh_map}.
The Green's function (with pole at $\infty$) of $\comp{L}$ is
\begin{equation} \label{eqn:g_cL}
g_L(w) = \log \abs{U(w)} - \log(\capacity(E))
= \sum_{j=1}^\ell m_j \log \abs{w-a_j} - \log(\capacity(E))
\end{equation}
since $g_L$ is harmonic in $\C \setminus \{ a_1, \ldots, a_\ell \}$, is 
zero on $\partial (\comp{L})$, and $g_L(w) - \log \abs{w}$ is harmonic at 
$\infty$ 
with $\lim_{w \to \infty} (g_L(w) - \log \abs{w}) = -\log(\capacity(E))$.
Then the Green's function of $\comp{E}$ is
\begin{equation} \label{eqn:g_cK}
g_E(z) = g_L(\Phi(z)), \quad z \in \comp{E},
\end{equation}
since $\Phi : \comp{E} \to \comp{L}$ is 
conformal with $\Phi(z) = z + \cO(1/z)$ at 
$\infty$.
In particular, $\capacity(E) = \capacity(L)$.
Denote for $R > 1$ the level curves of $g_E$ and $g_L$ by
\begin{equation*}
\Gamma_R = \{ z \in \comp{E} : g(z) = \log(R) \},
\quad
\Lambda_R = \{ w \in \comp{L} : g_L(w) = \log(R) \}.
\end{equation*}
Then $\Phi(\Gamma_R) = \Lambda_R$ and $\Phi$ maps the exterior of 
$\Gamma_R$ onto the exterior of $\Lambda_R$.
Let $R_* > 1$ be the largest number, such that $g_E$ has no critical point 
interior to $\Gamma_{R_*}$ (if $\ell = 1$, then $R_* = \infty$; see 
Theorem~\ref{thm:Green_function_connectivity} below).  Then $\Phi$ is the 
conformal map of $\exterior(\Gamma_R)$ onto the lemniscatic domain 
$\exterior(\Lambda_R)$ for all $1 < R < R_*$; see also~\cite[p.~31]{Walsh1958}.

Here and in the following, we extensively use the \emph{Wirtinger derivatives}
\begin{equation*}
\partial_z = \frac{1}{2} (\partial_x - i \partial_y) \quad \text{and} \quad
\partial_{\conj{z}} = \frac{1}{2} (\partial_x + i \partial_y),
\end{equation*}
where $z = x + iy$ with $x, y \in \R$.
We relate the exponents and centers of the lemniscatic domain to the Wirtinger 
derivatives $\partial_z g_E$ and $\partial_w g_L$ of the Green's functions.
Note that $\partial_z g$ is analytic if $g$ is a harmonic function, since
then $\partial_{\conj{z}} (\partial_z g) = \frac{1}{4} \Delta g = 0$.

\begin{lemma} \label{lem:int_transform}
The Green's functions $g_L$ and $g_E$ from~\eqref{eqn:g_cL} 
and~\eqref{eqn:g_cK} satisfy
\begin{equation} \label{eqn:chain_rule}
\partial_z g_E(z) = \partial_w g_L(\Phi(z)) \cdot \Phi'(z).
\end{equation}
Moreover, if $\gamma : \cc{a, b} \to \comp{E}$ is a smooth path, then
\begin{equation} \label{eqn:int_transform}
\int_\gamma \partial_z g_E(z) \, dz = \int_{\Phi \circ \gamma} \partial_w 
g_L(w) \, dw.
\end{equation}
\end{lemma}

\begin{proof}
Since $\Phi$ is analytic, we have $\partial_z \Phi = \Phi'$ and 
$\partial_{\conj{z}} \Phi = 0$.
Moreover, $\partial_z \conj{\Phi} = \conj{\partial_{\conj{z}} \Phi} = 0$.
With the chain rule for the Wirtinger derivatives and~\eqref{eqn:g_cK}, we find
\begin{equation} 
\frac{\partial g_E}{\partial z}(z)
= \frac{\partial g_L}{\partial w}(\Phi(z)) \cdot \frac{\partial 
\Phi}{\partial z}(z)
+ \frac{\partial g_L}{\partial \conj{w}}(\Phi(z)) \cdot \frac{\partial 
\conj{\Phi}}{\partial z}(z)
= \partial_w g_L(\Phi(z)) \cdot \Phi'(z),
\end{equation}
which is~\eqref{eqn:chain_rule}.
Integrating this expression over $\gamma$ yields
\begin{equation}
\int_\gamma \partial_w g_L(\Phi(z)) \Phi'(z) \, dz
= \int_a^b \partial_w g_L(\Phi(\gamma(t))) \Phi'(\gamma(t)) \gamma'(t) \, 
dt
= \int_{\Phi \circ \gamma} \partial_w g_L(w) \, dw.
\end{equation}
In combination with~\eqref{eqn:chain_rule}, this 
yields~\eqref{eqn:int_transform}.
\end{proof}

\begin{remark}
Formally, equation~\eqref{eqn:chain_rule} yields the relation between 
differentials
\begin{equation}
\partial_z g_E(z) \, dz
= \partial_w g_L(\Phi(z)) \cdot \Phi'(z) \, dz
= \partial_w g_L(w) \, dw,
\end{equation}
which yields~\eqref{eqn:int_transform} upon integrating.
\end{remark}

We are now ready to express the exponents $m_j$ through the Wirtinger 
derivatives of the Green's function.
For $j = 1, \ldots, \ell$, let $\gamma_j$ be a closed curve in $\C \setminus 
E$ with $\wind(\gamma_j; z) = \delta_{jk}$ for $z \in E_k$ and $k = 1, \ldots, 
\ell$, where $\wind(\gamma; z_0)$ denotes the winding number of the 
curve $\gamma$ about $z_0$, and $\delta_{jk}$ is the usual Kronecker delta.
More informally, the curve $\gamma_j$ contains $E_j$ but no 
$E_k$, $k \neq j$, in its interior.

\begin{theorem} \label{thm:mj_by_integration}
In the notation of Theorem~\ref{thm:walsh_map},
let $g_E$ and $g_L$ be the Green's functions of $\comp{E}$ 
and $\comp{L}$, respectively.
For each $j \in \{ 1, \ldots, \ell \}$, let $\gamma_j$ be a
closed curve in $\C \setminus E$ with $\wind(\gamma_j; z) = \delta_{jk}$ for $z 
\in E_k$ and $k = 1, \ldots, \ell$, and let $\lambda_j = \Phi \circ \gamma_j$.
Then,
\begin{equation} \label{eqn:mj_by_integration}
m_j
= \frac{1}{2 \pi i} \int_{\lambda_j} 2 \partial_w g_L(w) \, dw
= \frac{1}{2 \pi i} \int_{\gamma_j} 2 \partial_z g_E(z) \, dz.
\end{equation}
Moreover, if the function $f$ is analytic interior to $\lambda_j$ and 
continuous on $\trace(\lambda_j)$, then
\begin{equation} \label{eqn:aj_by_integration}
m_j f(a_j)
= \frac{1}{2 \pi i} \int_{\lambda_j} f(w) 2 \partial_w g_L(w) \, dw
= \frac{1}{2 \pi i} \int_{\gamma_j} f(\Phi(z)) 2 \partial_z g_E(z) \, dz.
\end{equation}
\end{theorem}

\begin{proof}
Since $2 \partial_w \log \abs{w} = \partial_w \log(w \conj{w}) = 1/w$, we
obtain from~\eqref{eqn:g_cL} that
\begin{equation} \label{eqn:deriv_gL}
2 \partial_w g_L(w) = \sum_{j=1}^\ell \frac{m_j}{w - a_j},
\end{equation}
which is a rational function.
By construction, $\lambda_j$ is a closed curve in $\C \setminus L$ with 
$\wind(\lambda_j; a_k) = \delta_{jk}$.
Integrating over $\lambda_j$ yields the first equality 
in~\eqref{eqn:mj_by_integration}.
The second equality follows by Lemma~\ref{lem:int_transform}.
Using~\eqref{eqn:deriv_gL} and the residue theorem, we obtain
\begin{equation}
\frac{1}{2 \pi i} \int_{\lambda_j} f(w) 2 \partial_w g_L(w) \, dw
= \frac{1}{2 \pi i} \int_{\lambda_j} \sum_{s=1}^\ell \frac{m_s f(w)}{w - 
a_s} \, dw
= m_j f(a_j).
\end{equation}
This shows the first equality in~\eqref{eqn:aj_by_integration}.
Multiplying~\eqref{eqn:chain_rule} by $f(\Phi(z))$ and integrating yields the 
second equality in~\eqref{eqn:aj_by_integration}.
\end{proof}

\begin{remark}
\begin{enumerate}
\item By~\eqref{eqn:mj_by_integration} in Theorem~\ref{thm:mj_by_integration}, 
the exponent $m_j$ of the lemniscatic domain is the residue of $2 \partial_w 
g_L$ at $a_j$.
Moreover, $m_j$ is (up to the factor $\frac{1}{2 \pi i}$) the \emph{module of 
periodicity} (or \emph{period}) of the differential $2 \partial_z g_E(z) \, 
dz$; see~\cite[p.~147]{Ahlfors1979}.
The latter can be rewritten as
\begin{equation*}
\int_{\gamma_j} 2 \partial_z g_E(z) \, dz
= \int_{\gamma_j} \left( - \frac{\partial g_E}{\partial y} dx + 
\frac{\partial g_E}{\partial x} dy \right)
= \int_{\gamma_j} \frac{\partial g_E}{\partial n}(z) \, \abs{dz},
\end{equation*}
where the middle integral is over the \emph{conjugate differential} of $d g_E$, 
and where $\frac{\partial g_E}{\partial n}$ is the derivative with respect to 
the normal pointing to the right of $\gamma_j$; 
see~\cite[pp.~162--164]{Ahlfors1979} for a detailed discussion.

\item 
Since $\partial_z g_E$ is analytic in $\comp{E}$ and 
$\partial_w g_L$ is analytic in $\widehat{\C} \setminus \{ a_1, \ldots, a_\ell 
\} \supseteq \comp{L}$, the integrals 
in~\eqref{eqn:mj_by_integration} have the same value for all positively 
oriented 
closed curves that contain only $E_j$ or $a_j$ in their interior.
\end{enumerate}
\end{remark}

The following well-known result due to J.\,L.\,Walsh~\cite{Walsh1969} 
establishes a relation between the critical points of the Green's function and 
the connectivity of $\comp{E}$.

\begin{theorem}[{\cite[pp.~67--68]{Walsh1969}}] 
\label{thm:Green_function_connectivity}
Let $E \subseteq \C$ be compact such that $\cK = \comp{E}$ is 
connected and such that $\cK$ possesses a Green's function $g_E$ with pole at 
infinity.
If $\cK$ is of finite connectivity $\ell$, then $g_E$ has precisely $\ell-1$ 
critical points in $\C \setminus E$, counted according to their multiplicity.
If $\cK$ is of infinite connectivity, $g_E$ has a countably infinite number 
of critical points.
Moreover, all critical points of $g_E$ lie in the convex hull of $E$.
\end{theorem}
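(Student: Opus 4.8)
The plan is to reduce the statement to counting the zeros of a single analytic function and then apply the argument principle. First I would observe that, since $g_E$ is real-valued and harmonic, a point $z_0 \in \comp{E}$ is a critical point of $g_E$ exactly when $\partial_z g_E(z_0) = 0$; thus the critical points of $g_E$, counted with multiplicity, are precisely the zeros of the analytic function $h \coloneq 2 \partial_z g_E$ in $\comp{E}$. Writing $g_E(z) = \int \log \abs{z-t} \, d\mu(t) - \log(\capacity(E))$, where $\mu$ is the equilibrium measure of $E$, and using $2 \partial_z \log \abs{z-t} = 1/(z-t)$, I would identify $h$ with the Cauchy transform
\[
h(z) = \int \frac{d\mu(t)}{z-t}.
\]
Because $\mu$ is a probability measure supported on $E$, this gives $h(z) = 1/z + \cO(1/z^2)$ as $z \to \infty$; in particular $h$ is analytic on $\comp{E}$, has a simple zero at $\infty$, and has no finite zeros outside a bounded set.

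For the finite-connectivity count I would work entirely with level curves of $g_E$ inside $\comp{E}$, so that no regularity of $\partial E$ is needed. Fix regular values $0 < v_1 < v_2$ with $v_1$ so small that $\{ g_E = v_1 \}$ consists of exactly $\ell$ smooth Jordan curves, one around each component of $E$, and $v_2$ so large that $\{ g_E = v_2 \}$ is a single smooth Jordan curve and all finite critical points lie in the region $D = \{ v_1 < g_E < v_2 \}$. The key geometric fact is that on any smooth Jordan level curve $\gamma$ traversed counterclockwise, $\nabla g_E = \conj{h}$ is a nonvanishing normal field and therefore winds by $+2\pi$, so $\Delta_\gamma \arg h = -2\pi$ uniformly. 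Applying the argument principle to $h$ on $D$, whose positively oriented boundary is the outer curve (counterclockwise) together with the $\ell$ inner curves (clockwise), yields
\[
2\pi N = \Delta_{\partial D} \arg h = (-2\pi) + \ell \cdot (+2\pi) = 2\pi(\ell-1),
\]
so $h$ has exactly $N = \ell - 1$ zeros in $D$, i.e. $g_E$ has exactly $\ell-1$ critical points counted with multiplicity.

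For infinite connectivity I would run the same computation on an exhaustion: enclosing any $n$ components of $E$ by a large outer level curve and $n$ inner level curves produces at least $n-1$ critical points in the corresponding region, forcing infinitely many; these are countable because the zeros of the nonconstant analytic function $h$ have no accumulation point in $\comp{E}$. For the convex-hull assertion I would use the Cauchy-transform representation directly: if $z_0$ is not in the convex hull of $E$, then all vectors $z_0 - t$ with $t \in E$ lie in an open half-plane, so there is $\theta$ with $\re( e^{i\theta}/(z_0 - t) ) > 0$ for every $t \in E$; integrating against $\mu$ gives $\re( e^{i\theta} h(z_0) ) > 0$, whence $h(z_0) \neq 0$ and $z_0$ is not a critical point.

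The main obstacle I anticipate is the topological bookkeeping of the level curves: justifying that for a small regular value $v_1$ the set $\{ g_E = v_1 \}$ splits into exactly $\ell$ Jordan curves, one per component of $E$, that $\{ g_E = v_2 \}$ is a single curve for large $v_2$, and that such regular values exist (via Sard's theorem applied to $g_E$ away from its finitely many critical points, combined with the structure of the level sets of $g_E$ near each continuum $E_j$). Once this and the uniform winding computation are established, the argument-principle count and the two remaining assertions are routine.
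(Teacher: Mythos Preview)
The paper does not supply its own proof of this theorem; it is quoted verbatim from Walsh~\cite[pp.~67--68]{Walsh1969} and used as a black box, so there is no in-paper argument to compare against. Your sketch is essentially the classical proof (and is in fact very close to Walsh's original argument): identify the critical points with the zeros of the analytic function $h = 2\partial_z g_E$, use the winding of $\nabla g_E = \conj{h}$ along level curves to feed the argument principle, and handle the convex-hull claim via the Cauchy transform of the equilibrium measure. The outline is sound, the winding computation is correct, and the half-plane argument for the convex hull is the right one.

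The one place to be careful---which you already flag---is the level-curve bookkeeping. As written, choosing $v_1$ small enough that $\{ g_E = v_1 \}$ has exactly $\ell$ components presupposes that critical points do not accumulate at $\partial E$, which is part of what you are proving. A clean way to break the circularity is to run the argument principle for an \emph{arbitrary} regular value $v_1 > 0$: if $\{ g_E = v_1 \}$ has $k(v_1)$ components, the count gives exactly $k(v_1) - 1$ zeros in $\{ v_1 < g_E < v_2 \}$. Since $k(v_1)$ is nondecreasing as $v_1 \downarrow 0$ and bounded above by $\ell$ (each level component encloses at least one component of $E$), and since every pair of components of $E$ is eventually separated by a level curve (use that $g_E \to 0$ at regular boundary points, which form a dense subset of $\partial E$ once a Green's function exists), you get $k(v_1) = \ell$ for all small regular $v_1$ and hence exactly $\ell - 1$ zeros in total. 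With that refinement the argument is complete.
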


As is typical for conformal maps with $\Phi(z) = z + \cO(1/z)$ at $\infty$, 
symmetry of $E$ (e.g., rotational symmetry or symmetry with respect to the real 
line) leads to the same symmetry of $L$, and to ``symmetry'' in the map 
$\Phi$.

\begin{lemma}[{\cite[Lem.~2.2]{SeteLiesen2016}}] \label{lem:symmetry_E_Phi}
Let the notation be as in Theorem~\ref{thm:walsh_map}.  Then the following 
symmetry relations hold.
\begin{enumerate}
\item If $E = E^*$, then $L = L^*$ and 
$\Phi(z) = \conj{\Phi(\conj{z})}$.
\item If $E = e^{i \theta} E \coloneq \{ e^{i \theta} z : z \in E \}$, then 
$L = e^{i \theta} L$ and $\Phi(z) = e^{-i \theta} \Phi(e^{i \theta} z)$.
\item In particular:
If $E = -E = \{ -z : z \in E \}$, then $L = - L$ and $\Phi(z) = -\Phi(-z)$.
\end{enumerate}
\end{lemma}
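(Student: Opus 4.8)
The plan is to deduce all three statements from the uniqueness asserted in Theorem~\ref{thm:walsh_map}. In each case I would form a new map out of $\Phi$ and the symmetry at hand, verify that it is again a conformal map of $\comp{E}$ onto a lemniscatic domain of the form~\eqref{eqn:lemniscatic_domain} with the normalization~\eqref{eqn:Phi_at_infinity}, and then invoke uniqueness to identify it with $\Phi$ and its target with $L$.

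For (i), assuming $E = E^*$, I set $\Psi(z) \coloneq \conj{\Phi(\conj{z})}$. Since $\conj{z} \in \comp{E}$ precisely when $z \in (\comp{E})^* = \comp{E}$, the map $\Psi$ is defined on all of $\comp{E}$; it is holomorphic because the two conjugations in $z \mapsto \conj{\Phi(\conj{z})}$ cancel, and it is bijective since $\Phi$ and complex conjugation are. Its image is $\Psi(\comp{E}) = (\Phi(\comp{E}))^* = (\comp{L})^* = \comp{(L^*)}$, and $L^*$ is again of the admissible form: writing $w \in L^*$ as $\abs{U(\conj{w})} \leq \capacity(E)$ and using $\abs{\conj{w} - a_j} = \abs{w - \conj{a_j}}$ shows $L^* = \{ w : \prod_{j=1}^\ell \abs{w - \conj{a_j}}^{m_j} \leq \capacity(E) \}$, a lemniscatic set with the same positive, unit-sum exponents $m_j$ and centers $\conj{a_j}$. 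Finally, $\Phi(z) = z + \cO(1/z)$ at $\infty$ gives $\Psi(z) = \conj{\conj{z} + \cO(1/\conj{z})} = z + \cO(1/z)$, so $\Psi$ carries the normalization~\eqref{eqn:Phi_at_infinity}. By the uniqueness of the lemniscatic domain and of the normalized map in Theorem~\ref{thm:walsh_map}, it follows that $L^* = L$ and $\Psi = \Phi$, which is exactly $L = L^*$ and $\Phi(z) = \conj{\Phi(\conj{z})}$.

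For (ii), assuming $E = e^{i\theta} E$, I would run the identical argument with $\Psi(z) \coloneq e^{-i\theta} \Phi(e^{i\theta} z)$: here $e^{i\theta} z \in \comp{E}$ iff $z \in e^{-i\theta}\comp{E} = \comp{E}$, the map $\Psi$ is conformal as a composition of $\Phi$ with rotations, its image is $e^{-i\theta}\comp{L}$, and $e^{-i\theta} L = \{ w : \prod_{j=1}^\ell \abs{w - e^{-i\theta} a_j}^{m_j} \leq \capacity(E) \}$ is lemniscatic with exponents $m_j$ and centers $e^{-i\theta} a_j$ (using $\abs{e^{i\theta} v - a_j} = \abs{v - e^{-i\theta} a_j}$), while $e^{-i\theta}(e^{i\theta} z + \cO(1/z)) = z + \cO(1/z)$ preserves the normalization. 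Uniqueness then yields $e^{-i\theta} L = L$ and $\Psi = \Phi$, i.e.\ $L = e^{i\theta} L$ and $\Phi(z) = e^{-i\theta}\Phi(e^{i\theta} z)$. Statement (iii) is the special case $\theta = \pi$, where $e^{i\theta} = -1$.

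The only point requiring genuine care, rather than a real obstacle, is checking in each case that the transformed target ($L^*$, respectively $e^{-i\theta} L$) is once more a lemniscatic domain of exactly the shape~\eqref{eqn:lemniscatic_domain}, with distinct centers and positive exponents summing to $1$. Since both complex conjugation and rotation act on the centers by $a_j \mapsto \conj{a_j}$ (respectively $a_j \mapsto e^{-i\theta} a_j$) while leaving the exponents $m_j$ untouched, these properties are manifestly preserved, so the hypotheses of the uniqueness statement in Theorem~\ref{thm:walsh_map} are met and the argument closes.
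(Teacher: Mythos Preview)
Your argument is correct: forming the candidate $\Psi$ from $\Phi$ and the given symmetry, checking that its target is again a set of the form~\eqref{eqn:lemniscatic_domain}, verifying the normalization~\eqref{eqn:Phi_at_infinity}, and then invoking the uniqueness in Theorem~\ref{thm:walsh_map} is exactly the right route, and you have carried out the details carefully. Note that the present paper does not actually supply a proof of this lemma; it merely quotes it from~\cite[Lem.~2.2]{SeteLiesen2016}, so there is no ``paper's own proof'' to compare against here --- but the uniqueness argument you give is the natural one and is presumably what the cited source does as well.
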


In the last two results of this section, we consider the case where $E$ and one 
or all of its components $E_j$ are symmetric with respect to the real line.
This allows to locate the points $a_1, \ldots, a_\ell$ and the critical points 
of the Green's function $g_E$.

\begin{theorem} \label{thm:aj_real}
In the notation of Theorem~\ref{thm:walsh_map}, suppose that $E^* = E$.
Let $j \in \{ 1, \ldots, \ell \}$.  If $E_j^* = E_j$ then $a_j \in \R$.
\end{theorem}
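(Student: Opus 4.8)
The plan is to feed the integral representation~\eqref{eqn:aj_by_integration} of Theorem~\ref{thm:mj_by_integration} with the entire test function $f(w) = w$, giving
\[
m_j a_j = \frac{1}{2\pi i} \int_{\gamma_j} \Phi(z)\, 2\partial_z g_E(z) \, dz,
\]
and then to show that the right-hand side is invariant under complex conjugation. Since $m_j > 0$ is real, $\overline{m_j a_j} = m_j \overline{a_j}$, so establishing that this integral equals its own conjugate forces $\overline{a_j} = a_j$, i.e.\ $a_j \in \R$.

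First I would record the two reflection identities that make the integrand symmetric. From Lemma~\ref{lem:symmetry_E_Phi}(i), $E^* = E$ yields $\Phi(z) = \conj{\Phi(\conj{z})}$, hence $\conj{\Phi(z)} = \Phi(\conj{z})$. For the Green's function, $z \mapsto g_E(\conj{z})$ is harmonic on the symmetric domain $\comp{E}$, vanishes on $\partial E$, and carries the same logarithmic pole at $\infty$, so by uniqueness of the Green's function $g_E(\conj{z}) = g_E(z)$. A short computation with $g_E(x,y) = g_E(x,-y)$ then shows that the analytic function $F \coloneq 2\partial_z g_E$ obeys the Schwarz-type reflection $F(\conj{z}) = \conj{F(z)}$. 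Consequently the integrand $G(z) \coloneq \Phi(z) F(z)$ satisfies $\conj{G(z)} = G(\conj{z})$.

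Next I would conjugate the integral and change variables via the reflected contour $\eta(t) = \conj{\gamma_j(t)}$: the identity $\conj{G(z)} = G(\conj{z})$ turns $\conj{\int_{\gamma_j} G \, dz}$ into $\int_{\eta} G(w) \, dw$. The decisive point is to identify the homology class of $\eta$ in $\C \setminus E$. Winding numbers satisfy $\wind(\eta; \zeta) = -\wind(\gamma_j; \conj{\zeta})$, and the hypothesis $E_j^* = E_j$ (together with $E^* = E$) guarantees that conjugation fixes the $j$-th component while permuting the others among themselves; hence $\wind(\eta; \zeta) = -\delta_{jk}$ for $\zeta \in E_k$. Thus $\eta$ is homologous to $-\gamma_j$ in $\C \setminus E$, where $G$ is analytic, so $\int_{\eta} G = -\int_{\gamma_j} G$. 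Combining, $\conj{2\pi i\, m_j a_j} = -2\pi i\, m_j \conj{a_j}$ equals $-2\pi i\, m_j a_j$, whence $\conj{a_j} = a_j$.

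The main obstacle is precisely this homological step, since it is the only place where the stronger assumption $E_j^* = E_j$ is used rather than merely $E^* = E$. Were $E_j$ not symmetric, conjugation would send its component to $E_{\sigma(j)}$ with $\sigma(j) \neq j$, the reflected contour $\eta$ would encircle $E_{\sigma(j)}$, and the argument would only yield $\conj{a_j} = a_{\sigma(j)}$, correctly reflecting that in general conjugation merely permutes the centers. I would therefore handle the winding-number bookkeeping cleanly, first noting that conjugation induces a permutation $\sigma$ of the components with $\sigma(j) = j$, and then invoking the curve-independence of the integral (the Remark following Theorem~\ref{thm:mj_by_integration}) so that no special symmetric choice of $\gamma_j$ is required.
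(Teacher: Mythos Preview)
Your proof is correct and follows essentially the same approach as the paper: both use the integral representation~\eqref{eqn:aj_by_integration} with $f(w)=w$, invoke the reflection identities $\Phi(\conj{z})=\conj{\Phi(z)}$ and $\partial_z g_E(\conj{z})=\conj{\partial_z g_E(z)}$, and then show the integral is real. The only tactical difference is that the paper selects a curve $\gamma_j$ symmetric about $\R$ from the outset (using $E_j^*=E_j$ to guarantee one exists) and applies Lemma~\ref{lem:symmetric_integral_is_real} directly, whereas you keep $\gamma_j$ arbitrary and argue that the reflected contour $\eta=\conj{\gamma_j}$ is homologous to $-\gamma_j$; these are two sides of the same coin.
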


\begin{proof}
Since $E^* = E$, we have $\Phi(\conj{z}) = \conj{\Phi(z)}$ 
by Lemma~\ref{lem:symmetry_E_Phi} and $\partial_z g_E(\conj{z}) = 
\conj{\partial_z g_E(z)}$ by Lemma~\ref{lem:symmetry_of_Green_function}.
Next, if $E_j^* = E_j$ for some $j \in \{ 1, \ldots, \ell \}$ then there 
exists a smooth Jordan curve $\gamma_j$ in $\C \setminus E$ symmetric with 
respect to the real line which surrounds $E_j$ in the positive sense, but no 
other component $E_k$, $k \neq j$, i.e., $\wind(\gamma_j; z) = \delta_{jk}$ for 
$z \in E_k$ and $k = 1, \ldots, \ell$.  By~\eqref{eqn:aj_by_integration},
\begin{equation*}
m_j a_j = \frac{1}{2 \pi i} \int_{\gamma_j} \Phi(z) 2 \partial_z g_E(z) \, dz,
\end{equation*}
where $\Phi(\conj{z}) 2 \partial_z g_E(\conj{z}) = \conj{\Phi(z) 2 
\partial_z 
g_E(z)}$ on $\gamma_j$.  By Lemma~\ref{lem:symmetric_integral_is_real},
we obtain $m_j a_j \in \R$, hence $a_j \in \R$ since $m_j > 0$.
\end{proof}

In Theorem~\ref{thm:aj_real}, if a component $E_j$ is not symmetric with 
respect to the real line, then the corresponding point $a_j$ is in general not 
real, as the example of the star in~\cite[Cor.~3.3]{SeteLiesen2016} shows.


%

If $E_j^* = E_j$ for all components of $E$ then we order the components
``from left to right'':  By Lemma~\ref{lem:compact_cap_R_is_connected}, each 
$E_j \cap \R$ is a point or an interval, and we label $E_1, \ldots, E_\ell$
such that $x \in E_j \cap \R$ and $y \in E_{j+1} \cap \R$ implies $x < y$ for 
all $j = 1, \ldots, \ell-1$.

\begin{theorem} \label{thm:crit_pts_of_gE}
Let $E = E_1 \cup \ldots \cup E_\ell$ be as in Theorem~\ref{thm:walsh_map} and 
suppose that $E_j^* = E_j$ for all $j = 1, \ldots, \ell$.
Then the following hold.
\begin{enumerate}
\item The $\ell-1$ critical points of the Green's function $g_E$ are real.
Moreover, each $E_j$ intersects $\R$ in a point or an interval, and the 
critical points of $g_E$ interlace the sets $E_j \cap \R$, $j = 1, \ldots, 
\ell$.

\item If $E_1, \ldots, E_\ell$ are ordered ``from left to right'' then
$a_1 < a_2 < \ldots < a_\ell$.
\end{enumerate}
\end{theorem}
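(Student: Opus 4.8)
The plan is to treat the two assertions in turn, exploiting the reflection symmetry $g_E(\conj z) = g_E(z)$ throughout. This symmetry follows from $E^* = E$: Lemma~\ref{lem:symmetry_E_Phi} gives $\Phi(\conj z) = \conj{\Phi(z)}$ and $L = L^*$, whence $g_E(\conj z) = g_L(\Phi(\conj z)) = g_L(\conj{\Phi(z)}) = g_L(\Phi(z)) = g_E(z)$, using that $g_L$ is symmetric for the symmetric set $L$. Differentiating $g_E(x,-y) = g_E(x,y)$ in $y$ shows $\partial_y g_E \equiv 0$ on $\R \cap \comp{E}$.

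For part~(i), by Lemma~\ref{lem:compact_cap_R_is_connected} each $E_j \cap \R$ is a point or a closed interval, so after the left-to-right labeling the real axis splits into the two unbounded rays and the $\ell-1$ bounded gaps $G_j$ lying strictly between $E_j$ and $E_{j+1}$. On the closure of each gap $g_E$ vanishes at the two endpoints (which lie on $\partial E$, where the Green's function of a simply connected compactum is zero) and is strictly positive in the interior, so it attains an interior maximum at some $t_j \in G_j$, where $\partial_x g_E(t_j) = 0$. Combined with $\partial_y g_E(t_j) = 0$, this makes $t_j$ a critical point of $g_E$. Thus we obtain at least $\ell-1$ real critical points, one per gap; since Theorem~\ref{thm:Green_function_connectivity} guarantees exactly $\ell-1$ critical points counted with multiplicity, each gap contains exactly one simple critical point and there are no others. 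The points $t_1 < \cdots < t_{\ell-1}$ are then real and interlace the sets $E_j \cap \R$, as claimed.

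For part~(ii), Theorem~\ref{thm:aj_real} already gives $a_j \in \R$ for every $j$. The key step is that $\Phi$ preserves the left-to-right order on the real axis. From $\Phi(\conj z) = \conj{\Phi(z)}$ the map $\Phi$ sends $\R \cap \comp{E}$ into $\R \cap \comp{L}$, and since $\Phi(z) = z + \cO(1/z)$ at $\infty$, $\Phi$ carries the (connected) upper part of $\comp{E}$ to the upper part of $\comp{L}$; hence $\Phi$ restricts to an orientation-preserving, strictly increasing homeomorphism between these real slices. Consequently $\Phi$ maps each gap $G_j$ onto the real gap of $\comp{L}$ lying between $L_j$ and $L_{j+1}$, and the unbounded rays to the unbounded rays. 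In particular the critical point $t_j \in G_j$ is sent to a critical point $s_j = \Phi(t_j)$ of $g_L$ situated between $a_j$ and $a_{j+1}$, and reading off the induced order of the components $L_1, \ldots, L_\ell$ yields $a_1 < a_2 < \cdots < a_\ell$.

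I expect the main obstacle to be the order-preservation claim in part~(ii), together with the boundary behaviour invoked in both parts. If $E$ is bounded by Jordan curves one may appeal directly to the homeomorphic extension of $\Phi$ to $\overline{\comp{E}}$; in the general case I would instead run the gap-and-maximum argument on the level curves $\Gamma_R$, which are analytic Jordan curves for $1 < R < R_*$ and on which $\Phi$ is a genuine homeomorphism onto $\Lambda_R$, and then let $R \downarrow 1$, using that all critical points of $g_E$ lie interior to every $\Gamma_R$ and that $\Phi$ fixes the asymptotic order at $\infty$. Once this reduction to level curves is in place, the counting and interlacing follow immediately from Theorem~\ref{thm:Green_function_connectivity}.
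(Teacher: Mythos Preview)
Your proof is correct and follows essentially the same approach as the paper. For part~(i) the paper does exactly the gap-and-maximum argument you outline (using Lemma~\ref{lem:symmetry_of_Green_function} for the symmetry of $g_E$ rather than going through $\Phi$ and $g_L$, but this is cosmetic); for part~(ii) the paper also reduces to order-preservation of $\Phi$ on $\R \setminus E$, establishing it by tracking a Jordan curve $\gamma_j$ around $E_j$ that meets the adjacent real gaps, rather than via your upper-half-plane argument---a minor variation on the same idea.
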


\begin{proof}
(i)
For each $j = 1, \ldots, \ell$, the set $E_j \cap \R$ is a point or an 
interval by Lemma~\ref{lem:compact_cap_R_is_connected}.
For $j = 1, \ldots, \ell - 1$, denote the `gap' on the real line between 
$E_j$ and $E_{j+1}$ by
\begin{equation*}
I_j \coloneq \oo{ \max (E_j \cap \R), \min (E_{j+1} \cap \R)}, \quad j = 1, 
\ldots, \ell-1.
\end{equation*}
The Green's function $g_E$ is positive on $I_j$ and can be continuously 
extended to $\conj{I}_j$ with boundary values $0$.
Then $g_E$ has a maximum on $\conj{I}_j$ at a point $x_j \in I_j$ at which
$\partial_x g_E(x_j) = 0$.
By~\eqref{eqn:partial_deriv_of_green_function}, we have 
$\partial_y g_E(x_j) = - \partial_y g_E(x_j)$, i.e., $\partial_y g_E(x_j) = 0$.
This shows that $x_j$ is a critical point of $g_E$ for $j = 1, \ldots, 
\ell-1$.
These are the $\ell-1$ critical points of $g_E$ which are real and interlace 
the sets $E_j \cap \R$.

(ii)
Since $E = E^*$, we have $\Phi(z) = \conj{\Phi(\conj{z})}$ by 
Lemma~\ref{lem:symmetry_E_Phi}.  In particular, $\Phi$ maps $\R \setminus E$ 
onto $\R \setminus L$.  Since $\Phi(z) = z + \cO(1/z)$ at infinity, $\Phi$ maps 
$I_0 \coloneq \oo{- \infty, \min(E_1 \cap \R)}$ onto $J_0 \coloneq \oo{-\infty, 
\min(L \cap \R)}$.
Let $\gamma_1$ be a Jordan curve in $\C \setminus E$ which surrounds $E_1$ in 
the positive sense, but no other component $E_k$, $k \neq 1$.
Then $\gamma_1$ intersects $I_0$ and $I_1$ (see~(i)), hence the curve 
$\Phi(\gamma_1)$ intersects the images $J_0 = \Phi(I_0)$ and $J_1 \coloneq 
\Phi(I_1)$.  This shows that $L_1$ is the leftmost component of $L$ and $a_1$ 
is the minimum of $a_1, \ldots, a_\ell$. 
Proceeding in a similar way gives that the components $L_1, \ldots, L_\ell$ 
are ordered from left to right, and therefore $a_1 < a_2 < \ldots < a_\ell$.
\end{proof}

\section{Results for polynomial pre-images}
\label{sect:poly_preimages}

Let $\Omega \subseteq \C$ be a compact infinite set such 
that $\comp{\Omega}$ is a simply connected domain in 
$\widehat{\C}$ and let $\Ri_\Omega$ be the exterior Riemann map of 
$\Omega$, i.e., the conformal map
\begin{equation} \label{eqn:riemannmap}
\Ri_\Omega : \comp{\Omega} \to \comp{\overline{\bD}}
\quad \text{with }
\Ri_\Omega(z) = d_1 z + d_0 + \sum_{k=1}^\infty \frac{d_{-k}}{z^k}
\quad \text{for } \abs{z} > R,
\end{equation}
where
\begin{equation} \label{eqn:d1}
d_1 = \Ri_\Omega'(\infty) = \frac{1}{\capacity(\Omega)} > 0,
\end{equation}
$R \coloneq \max_{z \in \Omega} \abs{z}$,
and $\bD = \{ z \in \C : \abs{z} < 1 \}$ is the open unit disk.
By~\cite[Thm.~4.4.4]{Ransford1995}, the Green's function of $\comp{\Omega}$ is
\begin{equation}
g_\Omega(z) = \log \abs{\Ri_\Omega(z)}.
\end{equation}
Let $P_n$ be a polynomial of degree $n \geq 1$, more precisely,
\begin{equation} \label{eqn:Pn}
P_n(z) = \sum_{j=0}^n p_j z^j \quad \text{with } p_n \in \C \setminus \{ 0 \},
\end{equation}
and consider the pre-image of $\Omega$ under $P_n$, that is
\begin{equation} \label{eqn:E_poly_preimage}
E = P_n^{-1}(\Omega) = \{ z \in \C : P_n(z) \in \Omega \}.
\end{equation}
The set $E$ is compact and, by Theorem~\ref{thm:cK_is_connected}, the 
complement $\comp{E}$ is connected.
Therefore, the Green's function of $\comp{E}$ is
\begin{equation} \label{eqn:green_function_poly_preimage}
g_E(z) = \frac{1}{n} g_\Omega(P_n(z))
= \frac{1}{n} \log \abs{\Ri_\Omega(P_n(z))},
\end{equation}
see~\cite[p.~134]{Ransford1995}.
Since $2 \partial_z \log \abs{f} = f'/f$ for an analytic function $f$, we have
\begin{equation} \label{eqn:dzbar_green_function_poly_preimage}
2 \partial_z g_E(z) = \frac{1}{n} \frac{\Ri_\Omega'(P_n(z)) 
P_n'(z)}{\Ri_\Omega(P_n(z))}.
\end{equation}
The logarithmic capacity of $E$ is
\begin{equation} \label{eqn:capacity_poly_preimage}
\capacity(E) = \capacity(P_n^{-1}(\Omega))
= \left( \frac{\capacity(\Omega)}{\abs{p_n}} \right)^{1/n}
= \frac{1}{\sqrt[n]{d_1 \abs{p_n}}},
\end{equation}
see~\cite[Thm.~5.2.5]{Ransford1995}.

By Theorem~\ref{thm:Green_function_connectivity}, the number of components of 
$E$ can be characterized as follows.
For the case $\Omega = \cc{-1, 1}$, see also~\cite[Thm.~4 and 
Thm.~5]{Schiefermayr2012}.

\begin{theorem} \label{thm:E_decomposition}
The set $E$ in~\eqref{eqn:E_poly_preimage} consists of $\ell$ disjoint simply 
connected compact components $E_1, \ldots, E_\ell$, i.e.,
\begin{equation} \label{eqn:E_components}
E = P_n^{-1}(\Omega) = \bigcup_{j=1}^\ell E_j,
\end{equation}
if and only if $P_n$ has exactly $\ell-1$ critical points $z_1, \ldots, 
z_{\ell-1}$ (counting multiplicities) for which $P_n(z_k) \notin \Omega$ for $k 
= 1, \ldots, \ell-1$.
Moreover, the number of zeros of $P_n(z) - \omega$ in $E_j$ is the same for all 
$\omega \in \Omega$, and this number is denoted by $n_j$.
\end{theorem}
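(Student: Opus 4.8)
The plan is to prove the two assertions separately: the count of components follows from locating the critical points of $g_E$ via the explicit formula~\eqref{eqn:dzbar_green_function_poly_preimage} and then invoking Theorem~\ref{thm:Green_function_connectivity}, while the constancy of $n_j$ follows from the argument principle combined with the connectedness of $\Omega$.

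For the first assertion I would determine the critical points of $g_E$ in $\comp{E}$. From~\eqref{eqn:dzbar_green_function_poly_preimage},
\[
2 \partial_z g_E(z) = \frac{1}{n} \frac{\Ri_\Omega'(P_n(z)) P_n'(z)}{\Ri_\Omega(P_n(z))},
\]
and for $z \in \comp{E}$ the image $P_n(z)$ lies in $\comp{\Omega}$, where $\Ri_\Omega$ is conformal; hence $\Ri_\Omega'(P_n(z)) \neq 0$ and $\abs{\Ri_\Omega(P_n(z))} > 1$, so neither the factor $\Ri_\Omega'(P_n(z))$ nor the denominator can vanish on $\comp{E}$. Consequently the zeros of the analytic function $\partial_z g_E$ in $\comp{E}$ are exactly the zeros of $P_n'$ there, i.e. the critical points $z_k$ of $P_n$ with $P_n(z_k) \notin \Omega$, and their multiplicities coincide. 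Since $g_E$ is harmonic, these are precisely the critical points of $g_E$ counted with multiplicity. Now $\comp{E}$ is connected of connectivity $\ell$ (its complement $E$ has $\ell$ components), so Theorem~\ref{thm:Green_function_connectivity} yields exactly $\ell-1$ critical points of $g_E$ in $\comp{E}$. Equating the two counts gives $\ell - 1 = \#\{ z_k : P_n'(z_k) = 0,\ P_n(z_k) \notin \Omega \}$, which is exactly the claimed equivalence.

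For the second assertion, fix $j$ and a positively oriented Jordan curve $\gamma_j$ in $\comp{E}$ surrounding $E_j$ but no other component. By the argument principle, for each $\omega \in \Omega$ the number of zeros of $P_n(\cdot) - \omega$ in $E_j$ equals
\[
n_j(\omega) = \frac{1}{2 \pi i} \int_{\gamma_j} \frac{P_n'(z)}{P_n(z) - \omega} \, dz,
\]
which is well defined because $\gamma_j \subseteq \comp{E}$ forces $P_n(z) \in \comp{\Omega}$, so $P_n(z) \neq \omega$ on $\gamma_j$ for every $\omega \in \Omega$. The integrand is jointly continuous on $\trace(\gamma_j) \times \Omega$, so $\omega \mapsto n_j(\omega)$ is continuous; being integer-valued on the connected set $\Omega$ (connected because $\comp{\Omega}$ is simply connected), it is constant. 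That constant is the desired $n_j$.

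The routine parts are the differentiation formula, already supplied in~\eqref{eqn:dzbar_green_function_poly_preimage}, and the argument-principle bookkeeping. The step that genuinely needs care is the first one: verifying that $\partial_z g_E$ has no spurious zeros beyond the critical points of $P_n$, i.e. that $\Ri_\Omega'$ never vanishes and $\Ri_\Omega \circ P_n$ never vanishes on $\comp{E}$, and that the multiplicity of each $z_k$ as a zero of $\partial_z g_E$ equals its multiplicity as a critical point of $P_n$. This is exactly what makes the critical-point count of Theorem~\ref{thm:Green_function_connectivity} line up with the count of critical points of $P_n$ lying outside $P_n^{-1}(\Omega)$.
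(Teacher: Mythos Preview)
Your proof is correct and follows essentially the same route as the paper: you identify the critical points of $g_E$ with those of $P_n$ in $\comp{E}$ via~\eqref{eqn:dzbar_green_function_poly_preimage} and invoke Theorem~\ref{thm:Green_function_connectivity}, then use the argument principle and connectedness of $\Omega$ for the constancy of $n_j$. The paper phrases the second part as constancy of the winding number $\wind(P_n\circ\gamma_j;\omega)$ for $\omega\in\Omega$ rather than continuity of the integral, but this is the same argument.
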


\begin{proof}
By Theorem~\ref{thm:Green_function_connectivity}, $E$ has $\ell$ components if 
and only if $g_E$ has $\ell - 1$ critical points (in $\C \setminus E$).
Since $g_E$ is real-valued, $z_0 \in \C \setminus E$ is a critical point of 
$g_E$ if and only if $\partial_z g_E(z_0) = 0$.
By~\eqref{eqn:dzbar_green_function_poly_preimage}, the latter is 
equivalent to $P_n'(z_0) = 0$.

For $j = 1, \ldots, \ell$, let $\gamma_j$ be a Jordan curve in $\C \setminus E$ 
with $\wind(\gamma_j; z) = \delta_{jk}$ for $z \in E_k$ and $k = 1, \ldots, 
\ell$.
Let $z_0 \in E_j$ and $\omega_0 \coloneq P_n(z_0) \in \Omega$, then,
by the argument principle,
\begin{equation}
n_j \coloneq \wind(P_n \circ \gamma_j; \omega_0)
= \abs{ \{ z \in E_j : P_n(z) = \omega_0 \}} \geq 1.
\end{equation}
Since $P_n \circ \gamma_j$ is a closed curve in $\C \setminus \Omega$, we 
have $\wind(P_n \circ \gamma_j; \omega) = \wind(P_n \circ \gamma_j; \omega_0)$ 
for all $\omega \in \Omega$, i.e., every point in $\Omega$ has exactly $n_j$ 
pre-images under $P_n$ in $E_j$.
\end{proof}

In the rest of this section, we assume that $E$ has $\ell$ components $E_1, 
\ldots, E_\ell$, i.e., that $P_n$ has exactly $\ell-1$ critical points with 
critical values in $\C \setminus \Omega$.

\begin{theorem} \label{thm:exponents_poly_preimage}
Let $E = P_n^{-1}(\Omega)$ and the numbers $n_1, \ldots, n_\ell$ be defined as 
in Theorem~\ref{thm:E_decomposition}.
Then the exponents $m_j$ in the lemniscatic domain in 
Theorem~\ref{thm:walsh_map} are given by
\begin{equation}
m_j = \frac{n_j}{n}, \quad j = 1, \ldots, \ell.
\end{equation}
\end{theorem}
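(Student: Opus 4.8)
The plan is to evaluate the contour integral for $m_j$ provided by Theorem~\ref{thm:mj_by_integration} by inserting the explicit expression~\eqref{eqn:dzbar_green_function_poly_preimage} for $2\partial_z g_E$ and then pushing the integral forward under $P_n$. Fix $j$ and let $\gamma_j$ be a closed curve in $\C \setminus E$ with $\wind(\gamma_j; z) = \delta_{jk}$ for $z \in E_k$, as in Theorem~\ref{thm:mj_by_integration}; I would take $\gamma_j$ to be a smooth Jordan curve surrounding only $E_j$. By the second equality in~\eqref{eqn:mj_by_integration} and~\eqref{eqn:dzbar_green_function_poly_preimage}, the integrand is
\[
2\partial_z g_E(z) = \frac{1}{n}\,\frac{\Ri_\Omega'(P_n(z))}{\Ri_\Omega(P_n(z))}\,P_n'(z),
\]
which is precisely of the form $h(P_n(z))\,P_n'(z)$ with $h \coloneq \tfrac{1}{n}\,\Ri_\Omega'/\Ri_\Omega$, i.e.\ the pullback under $P_n$ of the $1$-form $h(w)\,dw$.

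Writing $\sigma \coloneq P_n \circ \gamma_j$, the substitution formula for line integrals (which requires no injectivity of $P_n$) then gives
\[
m_j = \frac{1}{n}\cdot\frac{1}{2\pi i}\int_{\sigma}\frac{\Ri_\Omega'(w)}{\Ri_\Omega(w)}\,dw.
\]
Here $\sigma$ is a closed curve in $\comp{\Omega} = \widehat{\C}\setminus\Omega$, since $\gamma_j \subseteq \C\setminus E$ and $E = P_n^{-1}(\Omega)$; moreover, by the computation in the proof of Theorem~\ref{thm:E_decomposition}, $\wind(\sigma;\omega_0) = n_j$ for any $\omega_0 \in \Omega$. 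It remains to show that the integral on the right equals $n_j$, which immediately yields $m_j = n_j/n$.

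For that last step I would argue by homology. Because $\Ri_\Omega$ is analytic and zero-free on $\comp{\Omega}$, its logarithmic derivative $\Ri_\Omega'/\Ri_\Omega$ is holomorphic on $\C\setminus\Omega$, so $\frac{\Ri_\Omega'}{\Ri_\Omega}\,dw$ is a closed $1$-form there. The first homology of $\C\setminus\Omega$ is generated by a single loop winding once around the connected set $\Omega$; since $\sigma$ has winding number $n_j$ about $\Omega$, it is homologous to $n_j$ copies of a large positively oriented circle $C_\rho = \{\,|w| = \rho\,\}$ with $\Omega \subseteq \{\,|w|<\rho\,\}$. Hence
\[
\frac{1}{2\pi i}\int_{\sigma}\frac{\Ri_\Omega'}{\Ri_\Omega}\,dw = n_j\cdot\frac{1}{2\pi i}\int_{C_\rho}\frac{\Ri_\Omega'}{\Ri_\Omega}\,dw.
\]
Finally, the normalization~\eqref{eqn:riemannmap}--\eqref{eqn:d1} gives $\Ri_\Omega(w) = d_1 w + d_0 + \cO(1/w)$, whence $\Ri_\Omega'/\Ri_\Omega = 1/w + \cO(1/w^2)$ near $\infty$, so the integral over $C_\rho$ is $2\pi i$ (equivalently $\wind(\Ri_\Omega \circ C_\rho; 0) = 1$, as $\Ri_\Omega$ is orientation preserving with $d_1 > 0$). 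This gives $m_j = n_j/n$.

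The one step that needs genuine care, and the main obstacle, is the evaluation $\frac{1}{2\pi i}\int_\sigma \Ri_\Omega'/\Ri_\Omega\,dw = n_j$: one must translate the geometric winding number $\wind(\sigma;\omega_0) = n_j$ of $\sigma$ about the ``hole'' $\Omega$ into the analytic period of the closed form $\frac{\Ri_\Omega'}{\Ri_\Omega}\,dw$. This rests on $\Ri_\Omega$ being zero-free on $\comp{\Omega}$ (so the form has no interior periods) together with its behaviour at $\infty$, which pins the single period to $2\pi i$. By comparison, the change of variables and the substitution of the Green's function derivative are routine.
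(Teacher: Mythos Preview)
Your proof is correct and follows essentially the same route as the paper: both start from Theorem~\ref{thm:mj_by_integration}, insert~\eqref{eqn:dzbar_green_function_poly_preimage}, push the integral forward under $P_n$, and then use that $\wind(P_n\circ\gamma_j;\omega_0)=n_j$ to reduce to a single period of $\Ri_\Omega'/\Ri_\Omega$. The only cosmetic difference is in evaluating that period: the paper invokes Cauchy's integral formula for an unbounded domain, while you read it off directly from the Laurent expansion $\Ri_\Omega'/\Ri_\Omega = 1/w + \cO(1/w^2)$ at infinity---both yield the value~$1$.
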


\begin{proof}
For $j = 1, \ldots, \ell$, let $\gamma_j$ be a positively oriented Jordan 
curve in $\C \setminus E$ with $\wind(\gamma_j; z) = \delta_{jk}$ for $z \in 
E_k$ and $k = 1, \ldots, \ell$.
Using~\eqref{eqn:mj_by_integration} 
and~\eqref{eqn:dzbar_green_function_poly_preimage}, we obtain
\begin{equation}
m_j = \frac{1}{2 \pi i} \int_{\gamma_j} 2 \partial_z g_E(z) \, dz
= \frac{1}{n} \frac{1}{2 \pi i} \int_{\gamma_j} 
\frac{\Ri_\Omega'(P_n(z)) P_n'(z)}{\Ri_\Omega(P_n(z))} \, dz.
\end{equation}
Substituting $u = P_n(z)$ yields
\begin{equation} \label{eqn:mj_poly_preim_substituted}
m_j = \frac{1}{n} \frac{1}{2 \pi i} \int_{P_n \circ \gamma_j} 
\frac{\Ri_\Omega'(u)}{\Ri_\Omega(u)} \, du.
\end{equation}
Since $\wind(P_n \circ \gamma_j; u_0) = n_j$ for $u_0 \in \Omega$, the integral 
in~\eqref{eqn:mj_poly_preim_substituted} can be replaced by $n_j$ times an 
integral over a positively oriented Jordan curve $\Gamma$ in $\C \setminus 
\Omega$, i.e.,
\begin{equation}
m_j = \frac{n_j}{n} \frac{1}{2 \pi i} \int_{\Gamma} 
\frac{\Ri_\Omega'(u)}{\Ri_\Omega(u)} \, du.
\end{equation}
The integral is
\begin{equation}
\frac{1}{2 \pi i} \int_{\Gamma} \frac{\Ri_\Omega'(u)}{\Ri_\Omega(u)} \, du
= \frac{1}{2 \pi i} \int_{\Gamma} \frac{\frac{(u - u_0) 
\Ri_\Omega'(u)}{\Ri_\Omega(u)}}{u - u_0} \, du
= \lim_{u \to \infty} \frac{(u - u_0) \Ri_\Omega'(u)}{\Ri_\Omega(u)}
= 1
\end{equation}
by Cauchy's integral formula for an infinite domain;
see, e.g.,~\cite[Problem~14.14]{Markushevich1965}.
\end{proof}


Next, we derive a relation between the Walsh map $\Phi$ and the Riemann map 
$\Ri_\Omega$.
Let $E = P_n^{-1}(\Omega)$ be as in~\eqref{eqn:E_poly_preimage}.
Liesen and the second author proved in~\cite[Eqn.~(3.2)]{SeteLiesen2016} that 
the lemniscatic map $\Phi$ in Theorem~\ref{thm:walsh_map} and 
the exterior Riemann map $\Ri_\Omega$ are related by
\begin{equation} \label{eqn:relation_between_maps_with_U}
\abs{U(\Phi(z))} = \capacity(E) \abs{\Ri_\Omega(P_n(z))}^{1/n}, \quad z \in 
\comp{E},
\end{equation}
with $U$ from~\eqref{eqn:lemniscatic_domain}.  This follows by considering the
identity~\eqref{eqn:g_cK} between the corresponding Green's functions.
In Theorem~\ref{thm:relation_between_maps}, we establish a stronger result.

By Theorem~\ref{thm:exponents_poly_preimage}, the exponents of $U$ satisfy
$m_j = n_j/n$.
Together with~\eqref{eqn:capacity_poly_preimage}, we see that
\begin{equation} \label{eqn:Q}
Q(w) \coloneq \frac{e^{i \arg(p_n)}}{\capacity(E)^n} U(w)^n
= d_1 p_n \prod_{j=1}^\ell (w-a_j)^{n_j}
\end{equation}
is a polynomial of degree $n$.  Note that $L = \{ w \in \C : \abs{Q(w)} \leq 1 
\}$, and $Q : \comp{L} \to \comp{\overline{\bD}}$ is an $n$-to-$1$ map.
Then, equation~\eqref{eqn:relation_between_maps_with_U} is equivalent to
\begin{equation} \label{eqn:modulus_of_maps_is_equal}
\abs{Q(\Phi(z))} = \abs{\Ri_\Omega(P_n(z))}, \quad z \in \comp{E}.
\end{equation}
Next, we show that equality is also valid without the absolute value.
Moreover, we derive a relation between the points $a_j$ and the coefficients 
$p_{n-1}$, $p_n$ of $P_n$ for $n \geq 2$.  The case $n = 1$ is discussed in 
Remark~\ref{rem:poly_preimage_for_deg_one}.

\begin{theorem} \label{thm:relation_between_maps} \label{thm:aj_at_infty}
Let $E = P_n^{-1}(\Omega)$ be as in~\eqref{eqn:E_poly_preimage}.  We then have
\begin{equation} \label{eqn:relation_between_maps}
Q(\Phi(z)) = \Ri_\Omega(P_n(z)), \quad z \in \comp{E},
\end{equation}
that is,
\begin{equation} \label{eqn:formal_expression_Phi}
\Phi = Q^{-1} \circ \Ri_\Omega \circ P_n,
\end{equation}
with that branch of $Q^{-1}$ such that $\Phi(z) = z + \cO(1/z)$ at $\infty$.
Moreover, for $n \geq 2$,
\begin{equation} \label{eqn:aj_at_infty}
\sum_{j=1}^\ell n_j a_j = - \frac{p_{n-1}}{p_n}.
\end{equation}
\end{theorem}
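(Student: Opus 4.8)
The plan is to upgrade the equality of moduli \eqref{eqn:modulus_of_maps_is_equal} to a genuine equality of analytic functions, and then to read off \eqref{eqn:aj_at_infty} by comparing Laurent expansions at $\infty$. First I would observe that both $z \mapsto Q(\Phi(z))$ and $z \mapsto \Ri_\Omega(P_n(z))$ are analytic on $\comp{E}$: since $E = P_n^{-1}(\Omega)$, the polynomial $P_n$ maps $\comp{E}$ into $\comp{\Omega}$, on which $\Ri_\Omega$ is analytic, while $\Phi : \comp{E} \to \comp{L}$ is analytic and $Q$ is a polynomial. Both functions are moreover zero-free on $\comp{E}$: indeed $\abs{\Ri_\Omega} > 1$ on $\comp{\Omega}$, so $\Ri_\Omega(P_n(z)) \neq 0$, and then $\abs{Q(\Phi(z))} = \abs{\Ri_\Omega(P_n(z))} > 1$ by \eqref{eqn:modulus_of_maps_is_equal} forces $Q(\Phi(z)) \neq 0$ as well. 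Hence
\begin{equation*}
h(z) \coloneq \frac{Q(\Phi(z))}{\Ri_\Omega(P_n(z))}
\end{equation*}
is analytic and zero-free on $\C \setminus E$ with $\abs{h} \equiv 1$ there.

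Next I would extend $h$ across $\infty$ and pin down the constant. At $\infty$ both numerator and denominator have a pole of order $n$ with the same leading coefficient: using $\Phi(z) = z + \cO(1/z)$ together with \eqref{eqn:Q} gives $Q(\Phi(z)) = d_1 p_n z^n + \cO(z^{n-1})$, while \eqref{eqn:riemannmap} and \eqref{eqn:Pn} give $\Ri_\Omega(P_n(z)) = d_1 P_n(z) + d_0 + \cO(1/z) = d_1 p_n z^n + \cO(z^{n-1})$. Therefore $h$ has a removable singularity at $\infty$ with $h(\infty) = 1$, so $h$ is analytic on the connected domain $\comp{E}$ with $\abs{h} \equiv 1$. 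By the open mapping theorem a nonconstant analytic function cannot have constant modulus, so $h \equiv h(\infty) = 1$; this is precisely \eqref{eqn:relation_between_maps}. The formula \eqref{eqn:formal_expression_Phi} then follows immediately upon composing with the branch of $Q^{-1}$ singled out by the normalization $\Phi(z) = z + \cO(1/z)$.

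For \eqref{eqn:aj_at_infty} I would compare the coefficient of $z^{n-1}$ on the two sides of \eqref{eqn:relation_between_maps}. On the right, $\Ri_\Omega(P_n(z)) = d_1 P_n(z) + d_0 + \cO(1/z)$ contributes $d_1 p_{n-1}$. On the left, expanding the polynomial in \eqref{eqn:Q} by Vieta gives $Q(w) = d_1 p_n \bigl( w^n - (\sum_{j=1}^\ell n_j a_j) w^{n-1} + \cO(w^{n-2}) \bigr)$; since $\Phi(z) = z + \cO(1/z)$ carries no constant term, one has $\Phi(z)^n = z^n + \cO(z^{n-2})$ and $\Phi(z)^{n-1} = z^{n-1} + \cO(z^{n-3})$, so the coefficient of $z^{n-1}$ in $Q(\Phi(z))$ equals $-d_1 p_n \sum_{j=1}^\ell n_j a_j$. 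Equating the two expressions and dividing by $d_1 p_n \neq 0$ yields \eqref{eqn:aj_at_infty}. The hypothesis $n \geq 2$ guarantees that $z^{n-1}$ is a genuinely subleading power, distinct from the top-order pole.

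The main obstacle is the passage from $\abs{h} \equiv 1$ to $h \equiv 1$: it hinges on verifying that neither side vanishes on $\comp{E}$, so that $h$ is truly analytic (not merely meromorphic), and that the two $n$-fold poles at $\infty$ cancel with matching leading coefficient $d_1 p_n$, which fixes the unimodular constant to be exactly $1$. The remaining coefficient comparison is routine; the only point requiring care is that the normalization of $\Phi$ annihilates the $z^{n-1}$ term of $\Phi^n$, so that $\sum_{j=1}^\ell n_j a_j$ enters solely through the $w^{n-1}$ coefficient of $Q$.
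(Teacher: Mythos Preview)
Your proof is correct and follows essentially the same route as the paper: form the ratio $h = (Q\circ\Phi)/(\Ri_\Omega\circ P_n)$, use \eqref{eqn:modulus_of_maps_is_equal} to see it has constant modulus one, conclude it is a unimodular constant, and then pin that constant down to $1$ and extract \eqref{eqn:aj_at_infty} by matching the $z^n$ and $z^{n-1}$ coefficients at infinity. The only cosmetic differences are that you invoke the open mapping theorem where the paper cites the maximum modulus principle, and you phrase the determination of the constant as computing $h(\infty)=1$ after removing the singularity, whereas the paper writes $Q\circ\Phi = c\,\Ri_\Omega\circ P_n$ and reads off $c=1$ from the leading Laurent coefficient; these are the same argument.
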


\begin{proof}
Consider the Laurent series at infinity of $\Ri_\Omega \circ P_n$ and $Q \circ 
\Phi$.  By~\eqref{eqn:riemannmap} and~\eqref{eqn:Pn},
\begin{equation} \label{eqn:Riemann_of_Pn}
\Ri_\Omega(P_n(z))
= d_1 P_n(z) + d_0 + \sum_{k=1}^\infty \frac{d_{-k}}{P_n(z)^k}
= d_1 p_n z^n + d_1 p_{n-1} z^{n-1} + \cO(z^{n-2}).
\end{equation}
Since $\Phi(z) = z + \cO(1/z)$ at infinity, we have
\begin{equation*}
(\Phi(z) - a_j)^{n_j}
= (z-a_j)^{n_j} + \cO(z^{n_j-2})
= z^{n_j} - n_j a_j z^{n_j-1} + \cO(z^{n_j-2})
\end{equation*}
and, by~\eqref{eqn:Q},
\begin{equation} \label{eqn:QPhi}
Q(\Phi(z))
= d_1 p_n \prod_{j=1}^\ell (\Phi(z) - a_j)^{n_j}
= d_1 p_n z^n - d_1 p_n \sum_{j=1}^\ell n_j a_j z^{n-1} + \cO(z^{n-2}).
\end{equation}
The function $(Q \circ \Phi)/(\Ri_\Omega \circ P_n)$ is analytic in $\C 
\setminus E$ with constant modulus one, 
see~\eqref{eqn:modulus_of_maps_is_equal}, therefore constant (maximum modulus 
principle) and
\begin{equation}
Q(\Phi(z)) = c \Ri_\Omega(P_n(z)), \quad z \in \comp{E},
\end{equation}
where $c \in \C$ with $\abs{c} = 1$.
By comparing the coefficients of $z^n$ of the Laurent series at $\infty$, we 
see that $c = 1$, which shows~\eqref{eqn:relation_between_maps}.
Comparing the coefficients of $z^{n-1}$ then yields~\eqref{eqn:aj_at_infty}.
\end{proof}

\begin{figure}
{\centering
\begin{tikzpicture}
\matrix (m) [matrix of math nodes,row sep=3em,column sep=4em,minimum width=2em]
{ z \in \comp{E} & \comp{\Omega} \\
 w \in \comp{L} & \comp{\overline{\bD}} 
\\};
\path[-stealth]
    (m-1-1) edge node [left] {$\Phi$} (m-2-1)
            edge node [above] {$P_n$} (m-1-2)
    (m-2-1) edge node [below] {$Q$} (m-2-2)
    (m-1-2) edge node [right] {$\Ri_\Omega$} (m-2-2);
\end{tikzpicture}

}
\caption{Commutative diagram of the maps in 
Theorem~\ref{thm:relation_between_maps}.}
\label{fig:relation_between_maps}
\end{figure}

Figure~\ref{fig:relation_between_maps} illustrates 
Theorem~\ref{thm:relation_between_maps}.

\begin{remark} \label{rem:poly_preimage_for_deg_one}
In the case $n = 1$, i.e., $P_1(z) = p_1 z + p_0$ is a linear transformation,
the conformal map and lemniscatic domain are given explicitly as follows.
In this case, $E = P_1^{-1}(\Omega)$ consists of a single component, i.e., 
$\ell = 1$ and $m_1 = 1$, and $Q(w) = d_1 p_1 (w-a_1)$.
Comparing the constant terms at infinity of
$\Ri_\Omega(P_n(z)) = d_1 p_1 z + (d_1 p_0 + d_0) + \cO(1/z)$ 
with $Q(\Phi(z))$ from~\eqref{eqn:QPhi} yields
\begin{equation}
a_1 = - \frac{d_1 p_0 + d_0}{d_1 p_1}.
\end{equation}
By Theorem~\ref{thm:relation_between_maps}, the conformal map $\Phi : 
\comp{E} \to \comp{L}$ is
\begin{equation}
\Phi(z) = (Q^{-1} \circ \Ri_\Omega \circ P_1)(z)
= \frac{1}{d_1 p_1} \Ri_\Omega(p_1 z + p_0) + a_1,
\end{equation}
and
\begin{equation}
L = \left\{ w \in \C : \abs{w - a_1} \leq \frac{1}{d_1 \abs{p_1}} \right\}.
\end{equation}
\end{remark}

Formula~\eqref{eqn:relation_between_maps} does not lead to \emph{separate} 
expressions for $Q$ and $\Phi$, even if $\Ri_\Omega$ and $P_n$ are known.
However, if the polynomial $Q(w) = d_1 p_n \prod_{j=1}^\ell (w-a_j)^{n_j}$ is 
known, equation~\eqref{eqn:formal_expression_Phi}
yields an expression for $\Phi$.
Since the numbers $n_j$ are already known (Theorem~\ref{thm:E_decomposition}),
our next aim is to determine $a_1, \ldots, a_\ell$.


\begin{lemma} \label{lem:critpts}
Let $E = P_n^{-1}(\Omega)$ be as in~\eqref{eqn:E_poly_preimage} and with $\ell$ 
components.
\begin{enumerate}

\item \label{it:critpts_2}
A point $z_* \in \C \setminus E$ is a critical point of $P_n$ if and 
only if $w_* = \Phi(z_*)$ is a critical point of $Q$ in $\C \setminus L$.
Moreover, in that case
\begin{equation} \label{eqn:relation_critvals}
Q(w_*) = (\Ri_\Omega \circ P_n)(z_*).
\end{equation}

\item \label{it:critpts_3}
The polynomial $Q$ has $\ell - 1$ critical points in $\C \setminus L$ and these 
are the zeros of
\begin{equation} \label{eqn:critpts_Q}
\sum_{k=1}^\ell n_k \prod_{j=1, j \neq k}^\ell (w-a_j).
\end{equation}
\end{enumerate}
\end{lemma}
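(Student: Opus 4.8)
The plan is to differentiate the functional equation of Theorem~\ref{thm:relation_between_maps} and read off both statements from the non-vanishing of the relevant derivatives. For part~(i), since $Q(\Phi(z)) = \Ri_\Omega(P_n(z))$ holds throughout $\comp{E}$ and all four maps are analytic there, the chain rule gives $Q'(\Phi(z))\,\Phi'(z) = \Ri_\Omega'(P_n(z))\,P_n'(z)$. Fix a finite $z_* \in \C \setminus E$. Conformality of $\Phi$ on $\comp{E}$ yields $\Phi'(z_*) \neq 0$, and since $z_* \notin P_n^{-1}(\Omega) = E$ forces $P_n(z_*) \in \comp{\Omega}$, conformality of $\Ri_\Omega$ yields $\Ri_\Omega'(P_n(z_*)) \neq 0$. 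Hence $Q'(\Phi(z_*)) = 0$ if and only if $P_n'(z_*) = 0$. Because $\Phi$ maps $\comp{E}$ bijectively onto $\comp{L}$ with $\Phi(\infty) = \infty$, the image $w_* = \Phi(z_*)$ lies in $\C \setminus L$, which is exactly the claimed equivalence; and~\eqref{eqn:relation_critvals} is just~\eqref{eqn:relation_between_maps} evaluated at $z_*$.

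For part~(ii), I would use logarithmic differentiation of $Q(w) = d_1 p_n \prod_{j=1}^\ell (w-a_j)^{n_j}$, which gives $Q'(w)/Q(w) = \sum_{j=1}^\ell n_j/(w-a_j)$ and therefore
\begin{equation*}
Q'(w) = d_1 p_n \prod_{j=1}^\ell (w-a_j)^{n_j-1} \cdot \sum_{k=1}^\ell n_k \prod_{j=1, j\neq k}^\ell (w-a_j),
\end{equation*}
where the second factor is the polynomial $S(w)$ from~\eqref{eqn:critpts_Q}. Since each $n_j \geq 1$ by Theorem~\ref{thm:E_decomposition}, the product $\prod_{j}(w-a_j)^{n_j-1}$ accounts for the zeros of $Q'$ at the centers $a_j$, all of which lie in $L$ because $a_j \in L_j \subseteq L$ (Theorem~\ref{thm:walsh_map}). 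Consequently the zeros of $Q'$ in $\C \setminus L$ coincide, with multiplicities, with the zeros of $S$ in $\C \setminus L$.

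It then remains to show that \emph{all} zeros of $S$ lie outside $L$. The leading coefficient of $S$ is $\sum_{k=1}^\ell n_k = n \neq 0$, so $\deg S = \ell-1$ and $S$ has exactly $\ell-1$ zeros counted with multiplicity. On the other hand, part~(i) puts the critical points of $Q$ in $\C \setminus L$ in bijection with the critical points of $P_n$ in $\C \setminus E$, of which there are exactly $\ell-1$ by Theorem~\ref{thm:E_decomposition}. Combining these counts, $Q$ has precisely $\ell-1$ critical points in $\C \setminus L$, each a zero of $S$; since $S$ has only $\ell-1$ zeros in total, every zero of $S$ must lie in $\C \setminus L$ and these are exactly the sought critical points.

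The main obstacle I anticipate is the multiplicity bookkeeping that makes the two counts line up: I must check that the correspondence $z_* \mapsto \Phi(z_*)$ in part~(i) preserves the order of vanishing (which follows because $\Phi'$ and $\Ri_\Omega'(P_n(\cdot))$ are nonvanishing and $\Phi$ is locally biholomorphic), and that the factor $\prod_j(w-a_j)^{n_j-1}$ genuinely contributes no zero in $\C \setminus L$, so that the degree count of $S$ forces all $\ell-1$ of its zeros outside $L$. A useful guard against hidden cancellation is the direct evaluation $S(a_i) = n_i \prod_{j\neq i}(a_i-a_j) \neq 0$, confirming that no zero of $S$ collides with a center $a_j$.
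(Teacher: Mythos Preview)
Your proof is correct and follows essentially the same route as the paper: differentiate the identity $Q\circ\Phi=\Ri_\Omega\circ P_n$, use conformality of $\Phi$ and $\Ri_\Omega$ to transfer critical points, then factor $Q'$ and count degrees. Your write-up is in fact slightly more careful than the paper's in making explicit that $a_j\in L$, that $S(a_j)\neq 0$, and that multiplicities are preserved under the bijection, all of which the paper leaves implicit.
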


\begin{proof}
\ref{it:critpts_2}
By Theorem~\ref{thm:E_decomposition}, $P_n$ has $\ell-1$ critical points in $\C 
\setminus E$.
The functions $P_n$ and $\Ri_\Omega \circ P_n$ have the same critical points in 
$\comp{E}$ since $\Ri_\Omega$ is conformal in $\comp{\Omega}$ and
$(\Ri_\Omega \circ P_n)'(z) = \Ri_\Omega'(P_n(z)) P_n'(z)$.
By Theorem~\ref{thm:relation_between_maps}, we have $Q \circ \Phi = 
\Ri_\Omega \circ P_n$ in $\comp{E}$.
Since $(Q \circ \Phi)'(z) = Q'(\Phi(z)) \Phi'(z)$ and $\Phi$ is conformal,
we conclude that $z_*$ is a critical point of $Q \circ \Phi$ if and only if 
$w_* = \Phi(z_*)$ is a critical point of $Q$ which 
gives~\eqref{eqn:relation_critvals}.

\ref{it:critpts_3}
By~\ref{it:critpts_2}, $Q$ has exactly $\ell-1$ critical points in $\comp{L}$.  
By~\eqref{eqn:Q},
\begin{equation}
Q'(w) = d_1 p_n \prod_{j=1}^\ell (w-a_j)^{n_j-1} \cdot \bigg( \sum_{k=1}^\ell 
n_k \prod_{j=1, j \neq k}^\ell (w-a_j) \bigg),
\end{equation}
hence $a_1, \ldots, a_\ell$ are critical points of $Q$ with multiplicity
$\sum_{j=1}^\ell (n_j-1) = n-\ell$.
The remaining $\ell - 1$ critical points of $Q$ are the zeros of 
the polynomial in~\eqref{eqn:critpts_Q}.
\end{proof}

In principle, the right hand side in~\eqref{eqn:relation_critvals} can be 
computed when $P_n$ and $\Ri_\Omega$ are given.  If also $Q(w_*)$ 
can be computed, \eqref{eqn:relation_critvals} yields $\ell-1$ equations for 
$a_1, \ldots, a_\ell$.

With the results that we have established, we obtain the conformal map onto a 
lemniscatic domain of polynomial pre-images under $P_n(z) = \alpha (z-\beta)^n 
+ \gamma$, and of pre-images with one component ($\ell = 1$) and arbitrary 
polynomial.

\begin{proposition} \label{prop:example_zn}
Let $\Omega \subseteq \C$ be a simply connected infinite compact set.
Let $P_n(z) = \alpha (z-\beta)^n + \gamma$ with $\alpha, \beta, \gamma \in \C$, 
$\alpha \neq 0$, and $n \geq 2$.
\begin{enumerate}
\item \label{it:example_zn_1}
If $\gamma \notin \Omega$ then $E = P_n^{-1}(\Omega)$ has $n$ components,
$m_j = 1/n$ for $j = 1, \ldots, n$, the points $a_1, \ldots, a_n$ are given by
\begin{equation*}
a_{1, \ldots, n}
= \beta + \sqrt[n]{-\frac{\Ri_\Omega(\gamma)}{d_1 \alpha}}
\end{equation*}
with the $n$ distinct values of the $n$-th root and $d_1 = \Ri_\Omega'(\infty) 
> 0$,
\begin{equation} \label{eqn:example_zn_L}
L
= \Big\{ w \in \C : \prod_{j=1}^n \abs{w-a_j}^{1/n}
= \abs*{(w-\beta)^n + \frac{\Ri_\Omega(\gamma)}{d_1 \alpha}}^{1/n}
\leq (d_1 \abs{\alpha})^{-1/n} \Big\},
\end{equation}
and the Walsh map is
\begin{equation} \label{eqn:example_zn_Phi}
\Phi : \comp{E} \to \comp{L}, \quad
\Phi(z) = \beta + \sqrt[n]{\frac{\Ri_\Omega(P_n(z)) - 
\Ri_\Omega(\gamma)}{d_1 \alpha}},
\end{equation}
with that branch of the $n$-th root such that $\Phi(z) = z + \cO(1/z)$ at 
infinity.

\item \label{it:example_zn_2}
If $\gamma \in \Omega$ then $E = P_n^{-1}(\Omega)$ has one component,
$L$ is the disk
\begin{equation} \label{eqn:example_zn_2_L}
L = \{ w \in \C : \abs{w-\beta} \leq \capacity(E) = (d_1 \abs{\alpha})^{-1/n} 
\},
\end{equation}
and the conformal map of $\comp{E}$ onto a lemniscatic domain is
\begin{equation} \label{eqn:example_zn_2_Phi}
\Phi : \comp{E} \to \comp{L}, \quad
\Phi(z) = \beta + \sqrt[n]{\frac{\Ri_\Omega(P_n(z))}{d_1 \alpha}},
\end{equation}
with that branch of the $n$-th root such that $\Phi(z) = z + \cO(1/z)$ at 
infinity.
\end{enumerate}
\end{proposition}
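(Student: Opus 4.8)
The plan is to combine the identity $Q\circ\Phi=\Ri_\Omega\circ P_n$ of Theorem~\ref{thm:relation_between_maps} with the critical-point dictionary of Lemma~\ref{lem:critpts}, splitting according to whether the unique critical value $\gamma$ of $P_n$ lies in $\Omega$. I would begin by recording $P_n'(z)=n\alpha(z-\beta)^{n-1}$, so that $\beta$ is the only critical point of $P_n$, of multiplicity $n-1$, with critical value $P_n(\beta)=\gamma$. In case~\ref{it:example_zn_1}, the hypothesis $\gamma\notin\Omega$ means this critical value lies outside $\Omega$, so Theorem~\ref{thm:E_decomposition} gives $\ell-1=n-1$, i.e.\ $\ell=n$ components. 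Since $\sum_{j=1}^\ell n_j=n$ with $n$ summands each at least $1$, every $n_j=1$, whence $m_j=n_j/n=1/n$ by Theorem~\ref{thm:exponents_poly_preimage}. In particular $Q(w)=d_1\alpha\prod_{j=1}^n(w-a_j)$ has degree $n$ with simple roots, so its derivative $Q'$ of degree $n-1$ has all its zeros among the genuine critical points in $\comp{L}$.

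The crucial step is to determine $Q$. Because $\gamma\notin\Omega$ we have $\beta\notin E$, and $\beta$ is a critical point of $P_n$ of multiplicity $n-1$. Since $\Ri_\Omega$ and $\Phi$ are conformal, this multiplicity is transported intact: applying Lemma~\ref{lem:critpts}\ref{it:critpts_2} (and tracking orders of vanishing through $(\Ri_\Omega\circ P_n)'=\Ri_\Omega'(P_n)P_n'$ and $(Q\circ\Phi)'=Q'(\Phi)\Phi'$), the point $w_*\coloneq\Phi(\beta)$ is a critical point of $Q$ of multiplicity $n-1$, with $Q(w_*)=\Ri_\Omega(\gamma)$ by~\eqref{eqn:relation_critvals}. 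As $Q'$ has degree $n-1$ and $w_*$ already accounts for all of its zeros, we obtain $Q'(w)=n\,d_1\alpha(w-w_*)^{n-1}$, and integrating while matching $Q(w_*)=\Ri_\Omega(\gamma)$ yields $Q(w)=d_1\alpha(w-w_*)^n+\Ri_\Omega(\gamma)$.

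To locate $w_*$ I would use the trace relation~\eqref{eqn:aj_at_infty}: with $p_n=\alpha$ and $p_{n-1}=-n\alpha\beta$ one has $\sum_{j=1}^n a_j=-p_{n-1}/p_n=n\beta$. Since the roots $a_j$ of $Q$ are $w_*$ plus the $n$-th roots of $-\Ri_\Omega(\gamma)/(d_1\alpha)$, their sum is $n\,w_*$, forcing $w_*=\beta$. This gives the stated formula for $a_1,\dots,a_n$ and $Q(w)=d_1\alpha(w-\beta)^n+\Ri_\Omega(\gamma)$. The description~\eqref{eqn:example_zn_L} of $L$ follows from $L=\set{w\in\C:\abs{Q(w)}\le 1}$ rewritten through $U^n$, using $\capacity(E)=(d_1\abs{\alpha})^{-1/n}$ from~\eqref{eqn:capacity_poly_preimage}, while~\eqref{eqn:example_zn_Phi} follows by inverting $Q$ in $\Phi=Q^{-1}\circ\Ri_\Omega\circ P_n$ from~\eqref{eqn:formal_expression_Phi} and selecting the branch of the $n$-th root that enforces $\Phi(z)=z+\cO(1/z)$ at infinity (a short Laurent-series check at $\infty$ confirms this branch exists and is single-valued on $\comp{E}$, since $\Ri_\Omega(\gamma)\neq 0$ makes the $n$ values of the root distinct). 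Case~\ref{it:example_zn_2} is the degenerate analogue: $\gamma\in\Omega$ leaves $P_n$ with no critical value outside $\Omega$, so $\ell=1$, $n_1=n$, $m_1=1$, $Q(w)=d_1\alpha(w-a_1)^n$, and~\eqref{eqn:aj_at_infty} gives $a_1=\beta$; the disk~\eqref{eqn:example_zn_2_L} and the map~\eqref{eqn:example_zn_2_Phi} then drop out exactly as above with $\Ri_\Omega(\gamma)$ replaced by $0$.

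I expect the main obstacle to be the middle step, namely justifying that \emph{all} $n-1$ critical points of $Q$ collapse to the single point $w_*$. This is what makes $Q$ a pure $n$-th power up to an additive constant, and it hinges entirely on the multiplicity of $\beta$ being preserved under the conformal maps $\Ri_\Omega$ and $\Phi$ in Lemma~\ref{lem:critpts}\ref{it:critpts_2}; once this is in hand, the remaining identification of $w_*=\beta$ and the explicit forms of $L$ and $\Phi$ are routine bookkeeping with the normalization at $\infty$.
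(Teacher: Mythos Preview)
Your proposal is correct and follows essentially the same route as the paper: use Theorem~\ref{thm:E_decomposition} to count components, invoke Lemma~\ref{lem:critpts}\,\ref{it:critpts_2} to conclude that $w_*=\Phi(\beta)$ absorbs all $n-1$ critical points of $Q$ so that $Q(w)=d_1\alpha(w-w_*)^n+\Ri_\Omega(\gamma)$, and then use the trace relation~\eqref{eqn:aj_at_infty} to identify $w_*=\beta$. Your explicit remark that the multiplicity $n-1$ is transported intact because $\Ri_\Omega$ and $\Phi$ are conformal (via the chain rule on $(Q\circ\Phi)'=(\Ri_\Omega\circ P_n)'$) is in fact a small improvement over the paper, which simply cites Lemma~\ref{lem:critpts}\,\ref{it:critpts_2} for this step even though that lemma, as stated, does not mention multiplicities.
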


\begin{proof}
\ref{it:example_zn_1}
Since $P_n(\beta) = \gamma$,
the assumption $\gamma \notin \Omega$ is equivalent to $\beta \notin E$.
The only critical point of $P_n$ is $z_* = \beta$ with multiplicity $n-1$, 
hence $E$ has $\ell = n$ components by Theorem~\ref{thm:E_decomposition}.
The point $\beta_1 \coloneq \Phi(\beta) \in \C$ is then a critical point of 
$Q$ of multiplicity $n-1$ by Lemma~\ref{lem:critpts} \ref{it:critpts_2}.
Therefore, $Q'$ is a constant multiple of $(w - \beta_1)^{n-1}$ and
\begin{equation*}
Q(w) = \alpha_1 (w - \beta_1)^n + \gamma_1, \quad \alpha_1, \gamma_1 \in \C.
\end{equation*}
Next, let us determine $\alpha_1, \beta_1, \gamma_1$ in terms of 
$\alpha, \beta, \gamma$.  We have
\begin{equation*}
\gamma_1 = Q(\beta_1) = Q(\Phi(\beta)) = \Ri_\Omega(P_n(\beta)) = 
\Ri_\Omega(\gamma).
\end{equation*}
By~\eqref{eqn:Q}, the leading coefficient of $Q$ is $\alpha_1 = d_1 \alpha \neq 
0$.
Since $\ell = n$, we have
\begin{equation} \label{eqn:example_zn_Q}
Q(w) = \alpha_1 (w-\beta_1)^n + \gamma_1 = \alpha_1 \prod_{j=1}^n (w-a_j)
\end{equation}
with distinct $a_1, \ldots, a_n \in \C$.
In particular, $n_j = 1$ for $j = 1, \ldots, n$.
Equating the coefficients of $w^{n-1}$ in~\eqref{eqn:example_zn_Q} and using 
Theorem~\ref{thm:aj_at_infty}, we obtain    
\begin{equation*}
n \beta_1 = \sum_{j=1}^n a_j = \sum_{j=1}^n n_j a_j = - \frac{p_{n-1}}{p_n} = - 
\frac{\alpha (- n \beta)}{\alpha} = n \beta,
\end{equation*}
i.e., $\beta_1 = \beta$.
By~\eqref{eqn:example_zn_Q},
\begin{equation*}
a_{1, \ldots, n} = \beta_1 + \sqrt[n]{-\frac{\gamma_1}{\alpha_1}}
= \beta + \sqrt[n]{-\frac{\Ri_\Omega(\gamma)}{d_1 \alpha}}
\end{equation*}
with the $n$ distinct values of the $n$-th root.
By~\eqref{eqn:Q}, we have $L = \{ w \in \C : \abs{Q(w)} \leq 1 \}$,
which is equivalent to~\eqref{eqn:example_zn_L}.
Then~\eqref{eqn:example_zn_Phi} follows from~\eqref{eqn:formal_expression_Phi}.


\ref{it:example_zn_2}
The assumption $\gamma \in \Omega$ is equivalent to $\beta \in E$, 
thus $P_n$ has no critical point in $\C \setminus E$ and $E$ is connected, 
i.e., $\ell = 1$.  Then $m_1 = 1$ and $n_1 = n$.
By Theorem~\ref{thm:relation_between_maps}, $n a_1 = - \frac{p_{n-1}}{p_n} = n 
\beta$, hence $a_1 = \beta$.  Together 
with~\eqref{eqn:capacity_poly_preimage}, we obtain the 
expression~\eqref{eqn:example_zn_2_L} for $L$.
In contrast to case (i), we have $Q(w) = d_1 \alpha (w-\beta)^n$, which 
yields~\eqref{eqn:example_zn_2_Phi} by~\eqref{eqn:formal_expression_Phi}.
\end{proof}

In~\cite[Thm.~3.1]{SeteLiesen2016}, the lemniscatic domain and conformal map 
$\Phi$ have been explicitly 
constructed under the additional assumptions that $\Omega$ is symmetric with 
respect to $\R$ (i.e., $\Omega^* = \Omega$), $\gamma \in \R$ is left of 
$\Omega$, $\alpha > 0$ and $\beta = 0$.  A shift $\beta \neq 0$ can be 
incorporated with~\cite[Lem.~2.3]{SeteLiesen2016}.
In Proposition~\ref{prop:example_zn} we can relax the assumptions on $\Omega$ 
and the coefficients $\alpha, \beta, \gamma$.

The proof of Proposition~\ref{prop:example_zn} \ref{it:example_zn_2} 
generalizes to arbitrary polynomials $P_n$ of degree $n \geq 2$, which yields 
the following result for a connected polynomial pre-image.

\begin{corollary} \label{cor:connected_pre-image}
Let $\Omega \subseteq \C$ be a simply connected infinite compact set.
Let $P_n$ be a polynomial of degree $n \geq 2$ as in~\eqref{eqn:Pn} such that 
$E = P_n^{-1}(\Omega)$ is connected, i.e., $\ell = 1$.  Then $L = \{ w \in \C : 
\abs{w-a_1} \leq (d_1 \abs{p_n})^{-1/n} \}$ with $m_1 = 1$ and $a_1 = - 
\frac{p_{n-1}}{n p_n}$, and
\begin{equation*}
\Phi : \comp{E} \to \comp{L}, \quad
\Phi(z) = a_1 + \sqrt[n]{\frac{\Ri_\Omega(P_n(z))}{d_1 p_n}},
\end{equation*}
with that branch of the $n$-th root such that $\Phi(z) = z + \cO(1/z)$ at 
infinity.
\end{corollary}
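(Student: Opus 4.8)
The plan is to specialize the general machinery of Theorems~\ref{thm:exponents_poly_preimage} and~\ref{thm:relation_between_maps} to the connected case $\ell = 1$, following exactly the argument used for Proposition~\ref{prop:example_zn}~\ref{it:example_zn_2} but now for an arbitrary polynomial $P_n$ of degree $n \geq 2$ in place of $\alpha(z-\beta)^n + \gamma$.

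First I would record the immediate consequences of $\ell = 1$. Since $E = E_1$ is the only component, every $\omega \in \Omega$ has all $n$ of its pre-images under $P_n$ lying in $E_1$, so $n_1 = n$ in the notation of Theorem~\ref{thm:E_decomposition}. Theorem~\ref{thm:exponents_poly_preimage} then gives $m_1 = n_1/n = 1$. Next, equation~\eqref{eqn:aj_at_infty} of Theorem~\ref{thm:relation_between_maps} reads $n_1 a_1 = -p_{n-1}/p_n$ when $\ell = 1$, whence $a_1 = -p_{n-1}/(n p_n)$, as claimed.

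For the lemniscatic set, I would use that with $\ell = 1$ and $m_1 = 1$ the defining function in~\eqref{eqn:lemniscatic_domain} is simply $U(w) = w - a_1$, so $L = \{ w \in \C : \abs{w-a_1} \leq \capacity(E) \}$ is a disk; combining this with the capacity formula~\eqref{eqn:capacity_poly_preimage}, namely $\capacity(E) = (d_1 \abs{p_n})^{-1/n}$, yields the stated radius. For the map, I would substitute $\ell = 1$ and $n_1 = n$ into~\eqref{eqn:Q} to obtain $Q(w) = d_1 p_n (w-a_1)^n$, and then invoke the identity~\eqref{eqn:relation_between_maps}, $Q(\Phi(z)) = \Ri_\Omega(P_n(z))$. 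This gives $(\Phi(z)-a_1)^n = \Ri_\Omega(P_n(z))/(d_1 p_n)$, and taking the $n$-th root produces the displayed formula for $\Phi$.

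Since every ingredient is a direct specialization of results already established, I do not expect a genuine obstacle. The only point requiring a word of care is the selection of a single-valued branch of the $n$-th root on $\comp{E}$; this is legitimate because $\comp{E}$ is simply connected (as $E$ is a single simply connected component) and $\Ri_\Omega \circ P_n$ is non-vanishing there (being $\comp{\overline{\bD}}$-valued), and in any case~\eqref{eqn:formal_expression_Phi} of Theorem~\ref{thm:relation_between_maps} already guarantees that the branch compatible with the normalization $\Phi(z) = z + \cO(1/z)$ at $\infty$ exists and is the one to take.
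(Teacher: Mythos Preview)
Your proposal is correct and follows essentially the same approach as the paper's own proof, which is a terse three-line argument invoking $\ell=1 \Rightarrow m_1=1,\ n_1=n$, Theorem~\ref{thm:relation_between_maps} for $a_1$, and then reading off $L$, $Q$, and $\Phi$. You simply spell out these steps in more detail, and your remark on the existence of the single-valued branch is a welcome clarification not made explicit in the paper.
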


\begin{proof}
The assumption $\ell = 1$ implies $m_1 = 1$ and $n_1 = n$. By 
Theorem~\ref{thm:relation_between_maps}, we have $a_1 = - \frac{p_{n-1}}{n 
p_n}$, which yields the expressions for $L$, $Q(w) = d_1 p_n (w-a_1)^n$
and $\Phi$.
\end{proof}

Let us consider the case $\ell = 2$ in more detail.  In this case, $P_n$ has 
exactly one critical point outside $E$.

\begin{theorem} \label{thm:aj_for_two_components}
Let $E = P_n^{-1}(\Omega)$ in~\eqref{eqn:E_components} consist of two 
components, and let $z_*$ be the critical point of $P_n$ in $\C \setminus E$.
Then $a_1, a_2$ satisfy
\begin{align}
\left( a_2 + \frac{p_{n-1}}{n p_n} \right)^n &=
\frac{(-1)^{n_2} n_1^{n_2}}{d_1 p_n n_2^{n_2}}
(\Ri_\Omega \circ P_n)(z_*), \label{eqn:a2_for_2components} \\
a_1 &= -\frac{1}{n_1} \left( \frac{p_{n-1}}{p_n} + n_2 a_2 \right). 
\label{eqn:a1_for_2components}
\end{align}
\end{theorem}

\begin{proof}
By Theorem~\ref{thm:aj_at_infty}, the centers $a_1, a_2$ of $L$ satisfy 
\begin{equation} \label{eqn:lineq_aj_ell2}
n_1 a_1 + n_2 a_2 = - \frac{p_{n-1}}{p_n},
\end{equation}
or, equivalently,
\begin{equation} \label{eqn:poly_preimage_ell2_a1a2}
a_1 = -\frac{1}{n_1} \left( \frac{p_{n-1}}{p_n} + n_2 a_2 \right),
\quad \text{and} \quad
a_2 - a_1 = \frac{n}{n_1} \left( \frac{p_{n-1}}{n p_n} + a_2 \right).
\end{equation}
By Lemma~\ref{lem:critpts}~\ref{it:critpts_3}, the only critical 
point $w_*$ of $Q$ in $\C \setminus L$ is the zero of
$n_1 (w-a_2) + n_2 (w-a_1)$, i.e.,
\begin{equation*}
w_* = \frac{n_2 a_1 + n_1 a_2}{n}.
\end{equation*}
The corresponding critical value is
\begin{align*}
Q(w_*)
&= d_1 p_n (w_* - a_1)^{n_1} (w_* - a_2)^{n_2}
= d_1 p_n \left( \frac{n_1}{n} (a_2-a_1) \right)^{n_1} \left( 
\frac{n_2}{n} (a_1-a_2) \right)^{n_2} \\
&= d_1 p_n (-1)^{n_2} \frac{n_1^{n_1} n_2^{n_2}}{n^n} (a_2 - a_1)^n
= d_1 p_n (-1)^{n_2} \frac{n_2^{n_2}}{n_1^{n_2}} 
\left( a_2 + \frac{p_{n-1}}{n p_n} \right)^n,
\end{align*}
where we used~\eqref{eqn:poly_preimage_ell2_a1a2} in the last step.
Since $(\Ri_\Omega \circ P_n)(z_*) = Q(w_*)$ by 
Lemma~\ref{lem:critpts}\,\ref{it:critpts_2}, 
formula~\eqref{eqn:a2_for_2components} is established.
\end{proof}


In order to specify the branch of the $n$-th root 
in~\eqref{eqn:a2_for_2components}, some additional information is needed.
We show this for a set $\Omega$ which is symmetric with respect to the real 
axis and contains the origin, which covers the important examples $\Omega = 
\overline{\bD}$ and $\Omega = \cc{-1, 1}$.

\begin{lemma} \label{lem:sign_Pn_at_crit_pts}
Suppose that $\Omega^* = \Omega$ and $0 \in \Omega$.
Let $P_n$ be a polynomial of degree $n$ as in~\eqref{eqn:Pn}
with real coefficients
such that $P_n^{-1}(\Omega) = \cup_{j=1}^\ell E_j$ with $E_j^* = E_j$ 
for $j = 1, \ldots, \ell$.
Denote the critical points of $P_n$ in $\C \setminus E$ by $z_1, \ldots, 
z_{\ell-1}$.
\begin{enumerate}
\item Then $z_1, z_2, \ldots, z_{\ell-1} \in \R$ and $z_j$ is between $E_j \cap 
\R$ and $E_{j+1} \cap \R$ for each $j = 1, \ldots, \ell-1$, where we label 
$E_1, \ldots, E_\ell$ from left to right along the real line.

\item For each $j \in \{ 1, \ldots, \ell-1 \}$ and each $z \in \oo{\max(E_j 
\cap \R), \min(E_{j+1} \cap \R)}$, we have 
\begin{equation}
\sgn(\Ri_\Omega(P_n(z))) = \sgn(P_n(z)) = (-1)^{n_{j+1} + \ldots + n_\ell} 
\sgn(p_n),
\end{equation}
which holds in particular for $z = z_j$.
\end{enumerate}
If $\Omega$ is additionally symmetric with respect to the imaginary axis, 
the assertions also hold if $P_n$ has purely imaginary coefficients.
\end{lemma}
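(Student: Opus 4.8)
The plan is to reduce both assertions to facts already established for real sets and real polynomials, and then to treat the purely imaginary case by a rotation.

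For part (i), I would first recall from the proof of Theorem~\ref{thm:E_decomposition} that a point $z_0 \in \comp{E}$ is a critical point of $g_E$ if and only if $P_n'(z_0) = 0$; hence the $\ell-1$ critical points $z_1, \ldots, z_{\ell-1}$ of $P_n$ in $\comp{E}$ are exactly the critical points of $g_E$. Since $E_j^* = E_j$ for all $j$ and the components are ordered from left to right, Theorem~\ref{thm:crit_pts_of_gE}~(i) applies and shows that these critical points are real and interlace the sets $E_j \cap \R$; relabelling so that $z_j$ lies in the gap $I_j \coloneq \oo{\max(E_j \cap \R), \min(E_{j+1} \cap \R)}$ yields (i). In particular $z_j \in I_j$, which covers the ``in particular'' clause of (ii).

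For the first equality in (ii), I would argue as follows. Since $\comp{\Omega}$ is simply connected (so $\Omega$ is connected) and $\Omega^* = \Omega$, Lemma~\ref{lem:compact_cap_R_is_connected} shows $\Omega \cap \R$ is a point or a closed interval $\cc{c,d}$, and $0 \in \Omega$ forces $c \le 0 \le d$. Because $\Ri_\Omega$ is the normalized exterior Riemann map of a set with $\Omega^* = \Omega$, uniqueness gives $\Ri_\Omega(\conj{z}) = \conj{\Ri_\Omega(z)}$, so $\Ri_\Omega$ is real on $\R \setminus \Omega$. On each of the rays $\oo{d, \infty}$ and $\oo{-\infty, c}$ the function $\Ri_\Omega$ is real, continuous, and avoids $\cc{-1,1}$ (as $\abs{\Ri_\Omega} > 1$ on $\comp{\Omega}$); comparing with $\Ri_\Omega(z) = d_1 z + \cO(1)$ at $\infty$ with $d_1 > 0$ shows $\Ri_\Omega > 1$ on $\oo{d,\infty}$ and $\Ri_\Omega < -1$ on $\oo{-\infty,c}$. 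Hence $\sgn(\Ri_\Omega(u)) = \sgn(u)$ for every real $u \notin \Omega$. For $z \in I_j$ the value $P_n(z)$ is real (real coefficients) and lies outside $\Omega$ (since $z \notin E$), which gives $\sgn(\Ri_\Omega(P_n(z))) = \sgn(P_n(z))$. For the second equality I would track $\sgn(P_n(x))$ as $x$ decreases from $+\infty$ into $I_j$. Since $0 \in \Omega$, all $n$ zeros of $P_n$ lie in $E$, by Theorem~\ref{thm:E_decomposition} exactly $n_k$ of them (with multiplicity) lie in $E_k$, and none lie in the gaps. As $E_k^* = E_k$ and $P_n$ is real, the non-real zeros in $E_k$ occur in conjugate pairs, so $n_k$ has the same parity as the number of real zeros of $P_n$ in $E_k \cap \R$ counted with multiplicity; crossing $E_k \cap \R$ therefore multiplies $\sgn(P_n)$ by $(-1)^{n_k}$. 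Since $\sgn(P_n(x)) = \sgn(p_n)$ for $x \to +\infty$ and moving down into $I_j$ crosses exactly $E_\ell, E_{\ell-1}, \ldots, E_{j+1}$, we obtain $\sgn(P_n(z)) = (-1)^{n_{j+1} + \cdots + n_\ell} \sgn(p_n)$ for $z \in I_j$, and in particular at $z = z_j$.

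Finally, for purely imaginary coefficients I would set $Q_n \coloneq -i P_n$ (a polynomial with real coefficients and leading coefficient $q_n = -i p_n \in \R$) and $\widehat{\Omega} \coloneq -i\Omega$. The two symmetries of $\Omega$ together force $\Omega = -\Omega$, from which one checks $\widehat{\Omega}^* = \widehat{\Omega}$, $0 \in \widehat{\Omega}$, and $\comp{\widehat{\Omega}}$ simply connected, while $E = Q_n^{-1}(\widehat{\Omega})$ with the same components and same critical points. Applying (i) and (ii) to $Q_n$ and $\widehat{\Omega}$ and translating back via the rotation identity $\Ri_{\widehat{\Omega}}(w) = -i\,\Ri_\Omega(iw)$ (immediate from uniqueness of the normalized Riemann map, giving $\Ri_{\widehat{\Omega}}(Q_n(z)) = -i\,\Ri_\Omega(P_n(z))$) together with $\sgn(cw) = \tfrac{c}{\abs{c}}\sgn(w)$, the common factors $-i$ cancel and one recovers the stated identities, now with $\sgn(\cdot)$ read as $w \mapsto w/\abs{w}$. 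I expect the sign/parity bookkeeping in part (ii) --- correctly passing from the full multiplicities $n_k$ to the parity of real-zero crossings through the conjugate-pair structure of the symmetric components --- to be the main point requiring care, with the rotation reduction being routine once the identity $\Ri_{\widehat{\Omega}}(w) = -i\,\Ri_\Omega(iw)$ is in place.
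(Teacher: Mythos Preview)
Your proposal is correct and follows essentially the same approach as the paper for parts (i) and (ii): identifying the critical points of $P_n$ with those of $g_E$ and invoking Theorem~\ref{thm:crit_pts_of_gE}, then showing $\sgn(\Ri_\Omega(u)) = \sgn(u)$ on $\R \setminus \Omega$ via the symmetry $\Ri_\Omega(\conj{z}) = \conj{\Ri_\Omega(z)}$, and tracking $\sgn(P_n)$ across the gaps by counting conjugate-paired zeros in each $E_k$.

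The only point of divergence is the purely imaginary case. The paper handles it directly in parallel with the real case: using the additional symmetry of $\Omega$ to show that $\Ri_\Omega$ maps $(i\R) \setminus \Omega$ into $i\R$ with $\sgn(\Ri_\Omega(z)) = \sgn(z)$ there, and then noting that $P_n(z) \in i\R$ for real $z$ so the same sign-crossing count applies verbatim (the zeros of $P_n = iQ_n$ with $Q_n$ real still come in conjugate pairs). Your rotation $Q_n = -iP_n$, $\widehat{\Omega} = -i\Omega$ is an equally valid and arguably cleaner reduction; it packages the same symmetry observation into the identity $\Ri_{\widehat{\Omega}}(w) = -i\,\Ri_\Omega(iw)$ and avoids re-running the sign argument on the imaginary axis. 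Either way the content is the same.
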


\begin{proof}
(i) Note that $P_n$ and $g_E$ have the same critical points in $\comp{E}$, 
compare the proof of Theorem~\ref{thm:E_decomposition}.  Then, since $E_j^* = 
E_j$, (i) is a special case of Theorem~\ref{thm:crit_pts_of_gE}.

(ii) Since $\Omega^* = \Omega$, the Riemann map satisfies $\Ri_\Omega(z) = 
\conj{\Ri_\Omega(\conj{z})}$ for $z \in \Omega$.
In particular, if $z \in \R \setminus \Omega$, also $\Ri_\Omega(z) \in \R$.  
Together with $\Ri_\Omega'(z) > 0$, we have that
$\Ri_\Omega(\oo{\max(\Omega \cap \R), \infty}) = \oo{1, \infty}$ and 
$\Ri_\Omega(\oo{- \infty, \min(\Omega \cap \R)}) = \oo{-\infty, 1}$.
Since $0 \in \Omega$, we see that $\sgn(\Ri_\Omega(z)) = \sgn(z)$ for $z \in \R 
\setminus \Omega$.

Similarly, if $\Omega$ is additionally symmetric with respect to the imaginary 
axis, $\Ri_\Omega$ maps the imaginary axis onto itself and
$\sgn(\Ri_\Omega(z)) = \sgn(z)$ for $z \in (i \R) \setminus \Omega$.

We can treat the cases that the coefficients $P_n$ are real or purely imaginary 
(provided that $\Omega$ is also symmetric with respect to the imaginary axis) 
together.
If $z \in \R \setminus E$, we have $P_n(z) \in \R \setminus \Omega$ (or $P_n(z) 
\in (i \R) \setminus \Omega$) and hence $\sgn(\Ri_\Omega(P_n(z))) = 
\sgn(P_n(z))$.
It remains to compute $\sgn(P_n(z))$.  Since $0 \in \Omega$, we have 
$\sgn(P_n(z)) = \sgn(p_n)$ for $z > \max(E_\ell \cap \R)$.
Moreover, $P_n$ has $n_\ell$ zeros in $E_\ell$ which are either real or appear 
in complex conjugate pairs.  Therefore $\sgn(P_n(z)) = (-1)^{n_\ell} \sgn(p_n)$ 
for $z$ in the rightmost gap, 
i.e., $z \in \oo{\max(E_{\ell-1} \cap \R), \min(E_\ell \cap \R)}$.
Similarly, we get the assertion for the next gap and so on.
\end{proof}

\begin{corollary} \label{cor:aj_for_two_components}
Suppose that $\Omega^* = \Omega$ and $0 \in \Omega$.
Let $P_n$ be a polynomial of degree $n$ as in~\eqref{eqn:Pn}
with real coefficients such that 
$P_n^{-1}(\Omega) = E_1 \cup E_2$ with $E_1^* = E_1$ and $E_2^* = E_2$.
Let $n_1, n_2$ be the number of zeros of $P_n$ in $E_1$, $E_2$, respectively, 
and let $z_*$ be the critical point of $P_n$ in $\C \setminus E$.
Then the points $a_1, a_2$ are real with $a_1 < a_2$ and are given by
\begin{align}
a_1 &= - \frac{p_{n-1}}{n p_n} - \bigg( \left( \frac{n_2}{n_1} \right)^{n_1} 
\frac{(-1)^{n_2}}{d_1 p_n} \Ri_\Omega(P_n(z_*)) \bigg)^{\frac{1}{n}},
\label{eqn:a1_for_two_intervals} \\
%
a_2 &= - \frac{p_{n-1}}{n p_n} +
\bigg( \left( \frac{n_1}{n_2} \right)^{n_2} \frac{(-1)^{n_2}}{d_1 p_n} 
\Ri_\Omega(P_n(z_*)) \bigg)^{\frac{1}{n}}, 
\label{eqn:a2_for_two_intervals}
\end{align}
with the positive real $n$-th root.

If $\Omega$ is additionally symmetric with respect to the imaginary axis, then 
$P_n$ can also have purely imaginary coefficients.
\end{corollary}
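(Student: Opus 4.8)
The plan is to combine the explicit formulas of Theorem~\ref{thm:aj_for_two_components} with the reality and ordering information for $a_1, a_2$ and the sign information of Lemma~\ref{lem:sign_Pn_at_crit_pts}, so that the only genuine work is selecting the correct (real, positive) branch of the $n$-th root in~\eqref{eqn:a2_for_2components}. First I would record the qualitative facts. Since $P_n$ has real coefficients and $\Omega^* = \Omega$, the set $E = E_1 \cup E_2$ satisfies $E^* = E$; together with the hypotheses $E_1^* = E_1$ and $E_2^* = E_2$, Theorem~\ref{thm:aj_real} gives $a_1, a_2 \in \R$, and Theorem~\ref{thm:crit_pts_of_gE} (with the components ordered from left to right) gives $a_1 < a_2$.

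Write $c \coloneq -\frac{p_{n-1}}{n p_n} \in \R$. The relation $n_1 a_1 + n_2 a_2 = -p_{n-1}/p_n$ of Theorem~\ref{thm:aj_at_infty} becomes $n_1 (a_1 - c) + n_2 (a_2 - c) = 0$. Because $a_1 \neq a_2$ and $n_1, n_2 > 0$, the real numbers $a_1 - c$ and $a_2 - c$ are nonzero and of opposite sign, and with $a_1 < a_2$ this forces $a_1 - c < 0 < a_2 - c$. This is what pins down the branch of the root: the quantity $a_2 + \frac{p_{n-1}}{n p_n} = a_2 - c$ appearing in~\eqref{eqn:a2_for_2components} is a positive real number.

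It remains to check that the right-hand side of~\eqref{eqn:a2_for_2components} is also positive, so that the two sides agree as positive real $n$-th powers. By Lemma~\ref{lem:sign_Pn_at_crit_pts} applied with $\ell = 2$ and $j = 1$ (so that $n_{j+1} + \cdots + n_\ell = n_2$), one has $\sgn(\Ri_\Omega(P_n(z_*))) = (-1)^{n_2} \sgn(p_n)$. Hence $\frac{(-1)^{n_2}}{d_1 p_n} \Ri_\Omega(P_n(z_*)) > 0$, since $d_1 > 0$, the factor $1/p_n$ contributes $\sgn(p_n)$, and the two factors $(-1)^{n_2}$ cancel in sign. Multiplying by $(n_1/n_2)^{n_2} > 0$ shows the right-hand side of~\eqref{eqn:a2_for_2components} is positive, and taking the positive real $n$-th root of both sides yields exactly~\eqref{eqn:a2_for_two_intervals}.

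Finally I would obtain $a_1$ from the linear relation $a_1 - c = -\frac{n_2}{n_1}(a_2 - c)$, which is~\eqref{eqn:a1_for_2components}. Substituting the expression just found for $a_2 - c$ and absorbing the prefactor into the radical via $\left(\frac{n_2}{n_1}\right)^n \left(\frac{n_1}{n_2}\right)^{n_2} = \left(\frac{n_2}{n_1}\right)^{n - n_2} = \left(\frac{n_2}{n_1}\right)^{n_1}$, using $n = n_1 + n_2$, produces~\eqref{eqn:a1_for_two_intervals}; the leading minus sign is consistent with $a_1 - c < 0$. The main (indeed only nontrivial) obstacle is this branch determination, which reduces entirely to the sign bookkeeping of the two preceding paragraphs. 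For $\Omega$ additionally symmetric about the imaginary axis and $P_n$ with purely imaginary coefficients, the same argument applies verbatim, now using the final assertion of Lemma~\ref{lem:sign_Pn_at_crit_pts} to obtain the sign of $\Ri_\Omega(P_n(z_*))$.
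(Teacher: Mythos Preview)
Your proof is correct and follows essentially the same route as the paper's: invoke Theorems~\ref{thm:aj_real} and~\ref{thm:crit_pts_of_gE} for $a_1, a_2 \in \R$ with $a_1 < a_2$, use Lemma~\ref{lem:sign_Pn_at_crit_pts} to see that the right-hand side of~\eqref{eqn:a2_for_2components} is positive, deduce from $a_1 < a_2$ (equivalently $a_2 - c > 0$) that the positive real $n$-th root is the correct branch, and then recover $a_1$ from the linear relation~\eqref{eqn:a1_for_2components}. Your write-up is in fact slightly more explicit than the paper's in carrying out the algebra for~\eqref{eqn:a1_for_two_intervals}.
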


\begin{proof}
By Theorem~\ref{thm:aj_real} and Theorem~\ref{thm:crit_pts_of_gE}, the 
points $a_1, a_2$ are real and $a_1 < a_2$.
By Theorem~\ref{thm:aj_for_two_components}, we 
have~\eqref{eqn:a2_for_2components},
which gives~\eqref{eqn:a2_for_two_intervals}.
Since $\frac{(-1)^{n_2}}{p_n} \Ri_\Omega(P_n(z_*)) > 0$ by 
Lemma~\ref{lem:sign_Pn_at_crit_pts}\,(ii) and $d_1 > 0$, the right hand side in 
formula~\eqref{eqn:a2_for_2components} 
is positive.
By~\eqref{eqn:a1_for_2components}, $a_1 < a_2$ is equivalent to $a_2 > 
- \frac{p_{n-1}}{n p_n}$, which shows that we have to take the positive real 
$n$-th root in~\eqref{eqn:a2_for_two_intervals}.
Inserting~\eqref{eqn:a2_for_two_intervals} into~\eqref{eqn:a1_for_2components} 
yields~\eqref{eqn:a1_for_two_intervals}.
\end{proof}

\section{Examples}
\label{sect:examples}

In this section, we consider six examples of polynomial pre-images 
$E = P_n^{-1}(\Omega)$ for the cases $\Omega = \cc{-1, 1}$, $\Omega = 
\overline{\bD}$ and $\Omega = \cE_R \coloneq \{ \frac{1}{2} (r e^{it} + 
r^{-1} e^{-it}) : t \in \co{0, 2 \pi}, 1 \leq r \leq R \}$ 
(Chebyshev ellipse), $R > 1$.  We have the exterior Riemann maps
\begin{equation*}
\Ri_{\cc{-1, 1}}(z) = z + \sqrt{z^2 - 1},
\quad \text{and} \quad
\Ri_{\cE_R}(z) = \frac{1}{R} (z + \sqrt{z^2 - 1}) = \frac{1}{R} \Ri_{\cc{-1, 
1}}(z),
\end{equation*}
where the branch of the square root is chosen such that 
$\abs{\Ri_{\cc{-1, 1}}(z)} > 1$.
In particular, the coefficients of $z$ at infinity are
$\Ri_{\cc{-1, 1}}'(\infty) = 2$ and $\Ri_{\cE_R}'(\infty) = 2/R$;
see~\eqref{eqn:riemannmap} and~\eqref{eqn:d1}.
We begin with three examples for Proposition~\ref{prop:example_zn}.

\begin{example}
\label{ex:zn_star}
Let $\Omega = \cc{-1, 1}$ and $P_n(z) = z^n$.  Since the critical value 
of $P_n$ is $0 \in \Omega$, the set $L$ and Walsh map $\Phi$ of the 
connected 
star $E = P_n^{-1}(\cc{-1, 1}) = \cup_{k=1}^n e^{k 2 \pi i/n} \cc{-1, 1}$ are 
given by Proposition~\ref{prop:example_zn} (ii) as
$L = \{ w \in \C : \abs{w} \leq 2^{-1/n} \}$ and
\begin{equation*}
\Phi : \comp{E} \to \comp{L}, \quad
\Phi(z)
= \sqrt[n]{\frac{z^n + \sqrt{z^{2n} - 1}}{2}}
= z \sqrt[n]{\frac{z^n + \sqrt{z^{2n} - 1}}{2 z^n}}.
\end{equation*}
We take the branch of the square root with $\abs{z^n + \sqrt{z^{2n} - 1}} > 1$.
In the second representation of $\Phi$ we take the principal branch of the 
$n$-th root; see~\cite[Thm.~3.1]{SeteLiesen2016}.
In particular, the logarithmic capacity of $E$ is $2^{-1/n}$.
Figure~\ref{fig:example_zn_star} illustrates the case $n = 5$.
The left panel shows a phase plot of $\Phi$.
In a phase plot, the domain is colored according to the phase $f(z)/\abs{f(z)}$ 
of the function $f$; see~\cite{Wegert2012} for an introduction to phase plots.
The middle and right panels show $E$ and $\partial L$ (in black) as well as a 
grid and its image under $\Phi$.

\begin{figure}
{\centering
\includegraphics[width=0.32\linewidth]{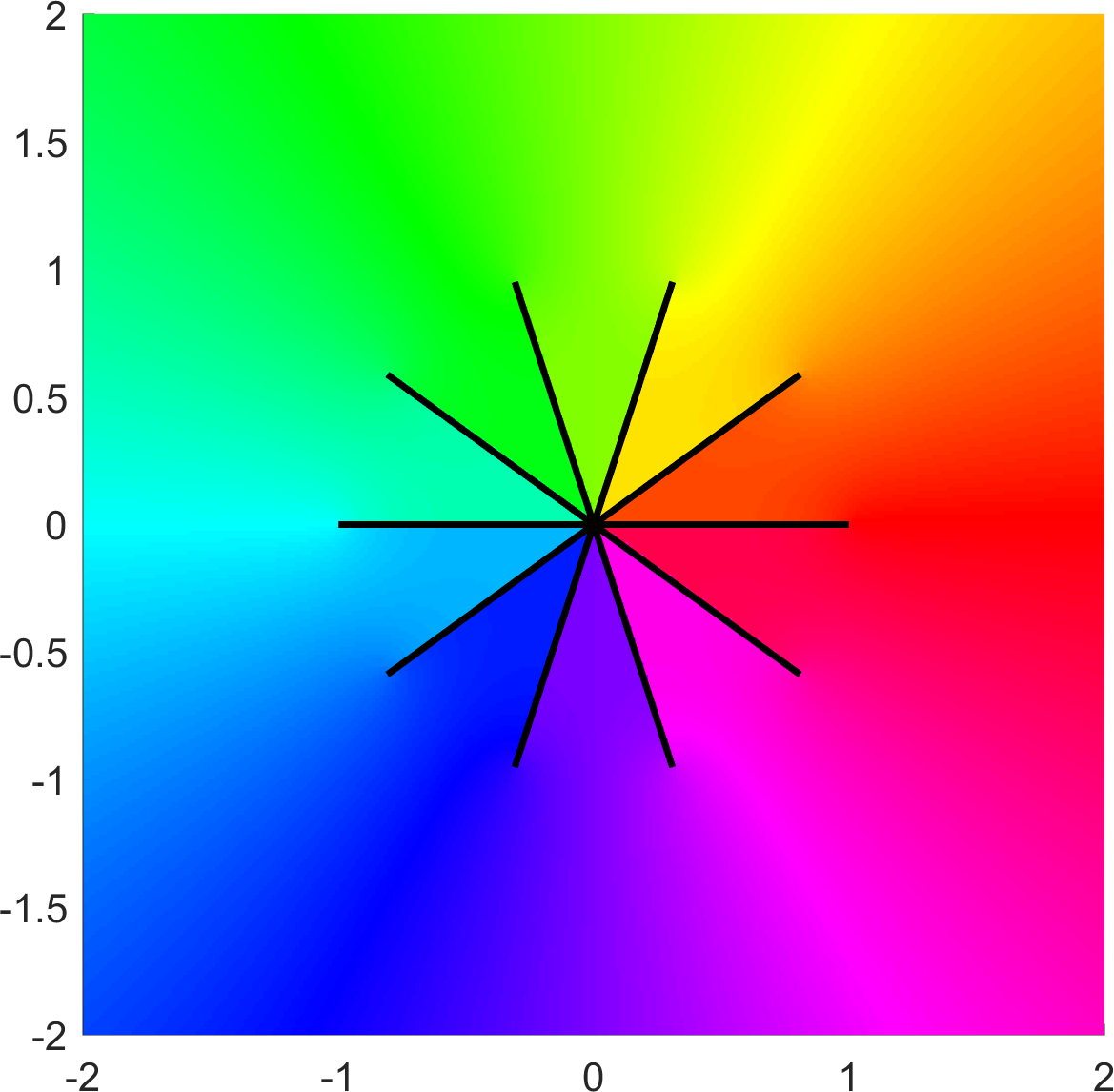}
\includegraphics[width=0.32\linewidth]{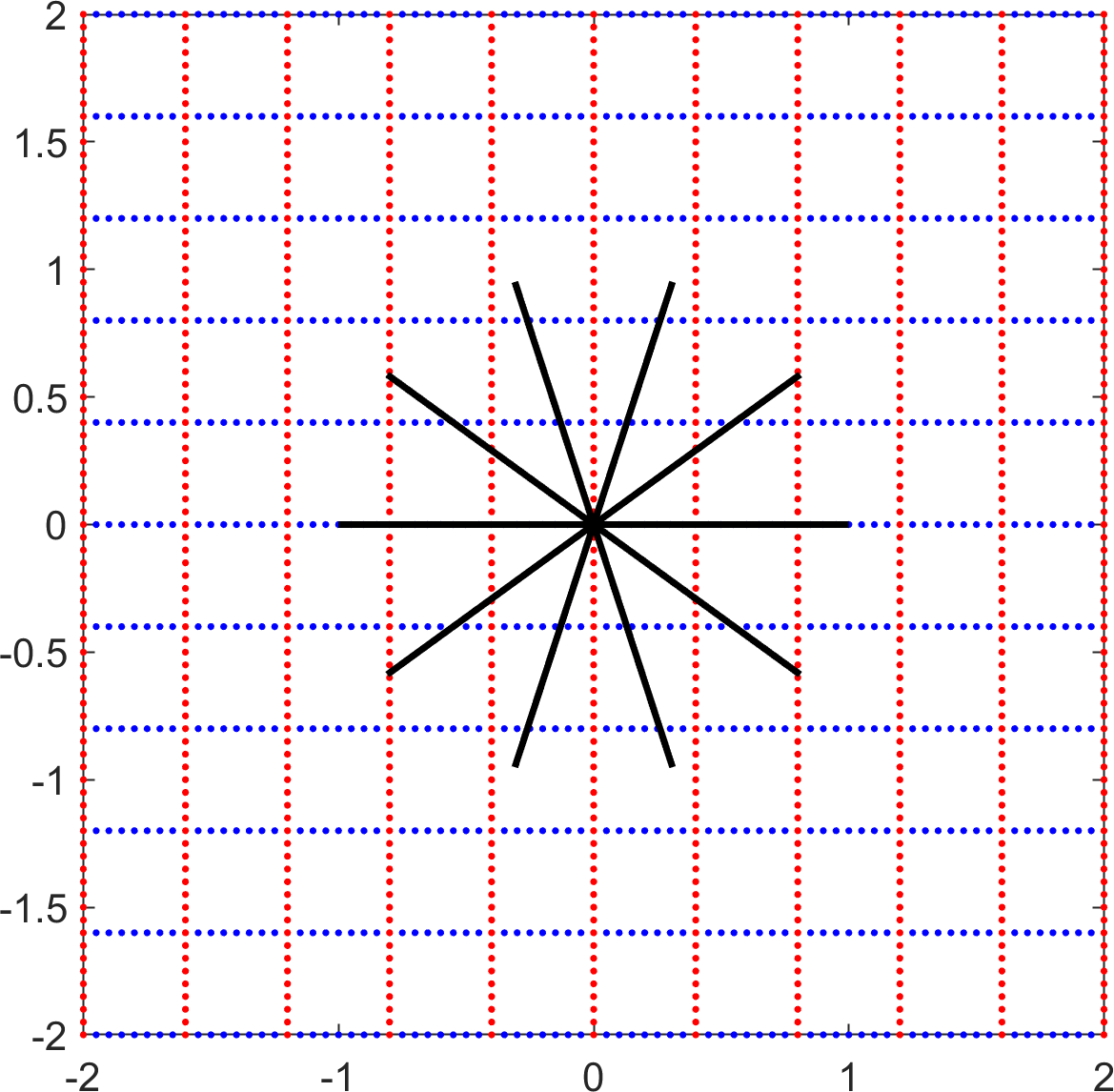}
\includegraphics[width=0.32\linewidth]{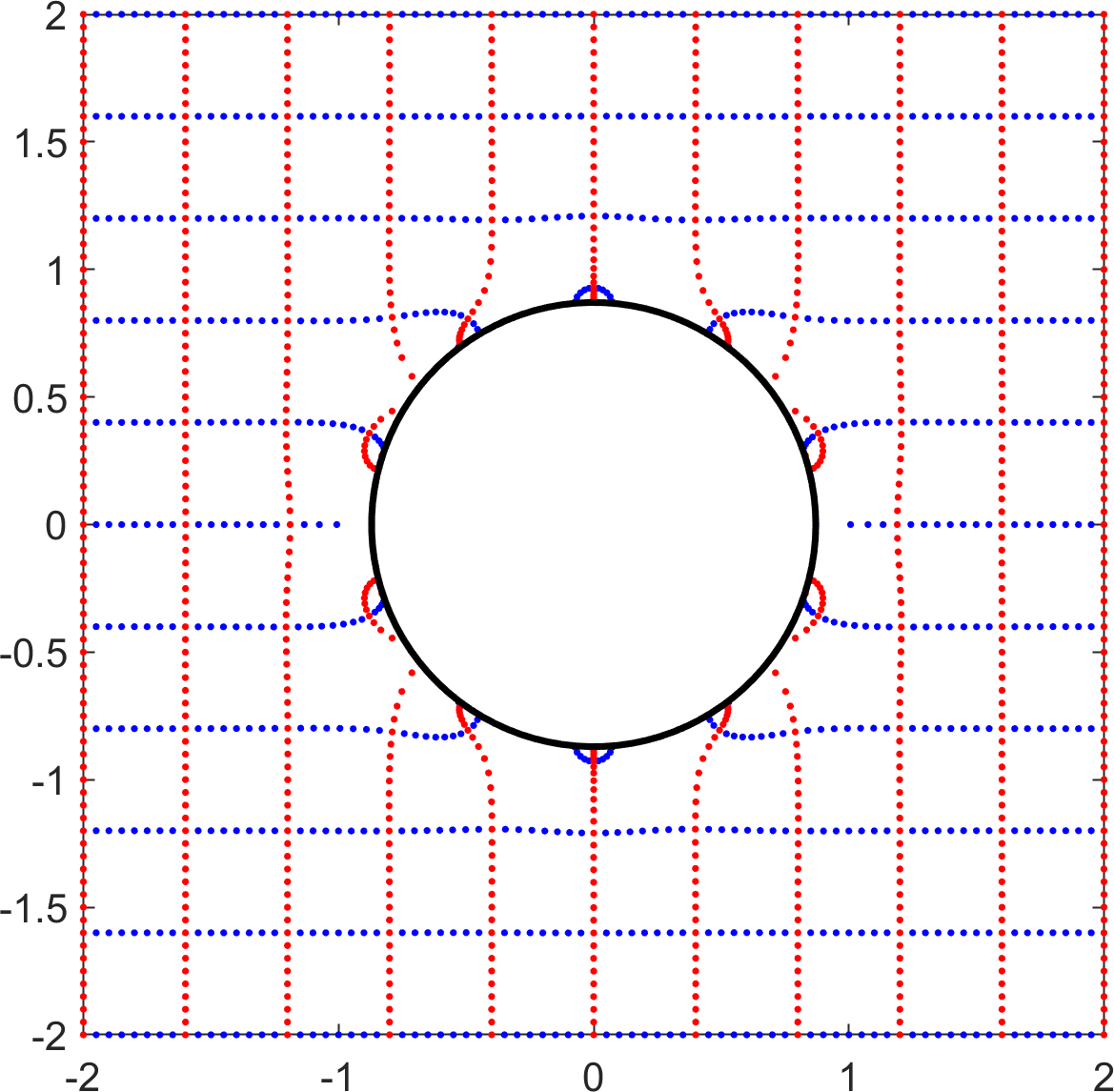}

}
\caption{Pre-image $E = P^{-1}(\cc{-1, 1})$ with $P(z) = z^5$ in 
Example~\ref{ex:zn_star}.
Left: Phase plot of $\Phi$, middle: $E$ (black) and grid, right: $\partial L$ 
(black) and image of the grid under $\Phi$.}
\label{fig:example_zn_star}
\end{figure}
\end{example}

\begin{example} \label{ex:zn_ellipse}
Let $\Omega = \cE_{1.25}$ be the Chebyshev ellipse bounded by $\{ \frac{1}{2} 
(\frac{5}{4} e^{it} + \frac{4}{5} e^{-it}) : t \in \co{0, 2 \pi} \}$
and let $E = P^{-1}(\cE_{1.25})$ with $P(z) = (z-1)^5 + \gamma$ for two 
different values of $\gamma$.
For $\gamma = 0.3i \notin \Omega$, the set $E$ consists of $n=5$ components, 
while for $\gamma = 0.75 \in \Omega$, the set $E$ has only one component; see
Proposition~\ref{prop:example_zn}.
Figure~\ref{fig:example_zn_ellipse} shows phase plots of $\Phi$ (left), the 
sets $\partial E$ and $\partial L$ in black and a grid and its image.
The phase plots show $\Phi$ and an analytic continuation to the interior of 
$E$.  The discontinuities in the phase (in the interior of $E$) are branch cuts 
of this analytic continuation.

\begin{figure}[t]
{\centering
\includegraphics[width=0.32\linewidth]{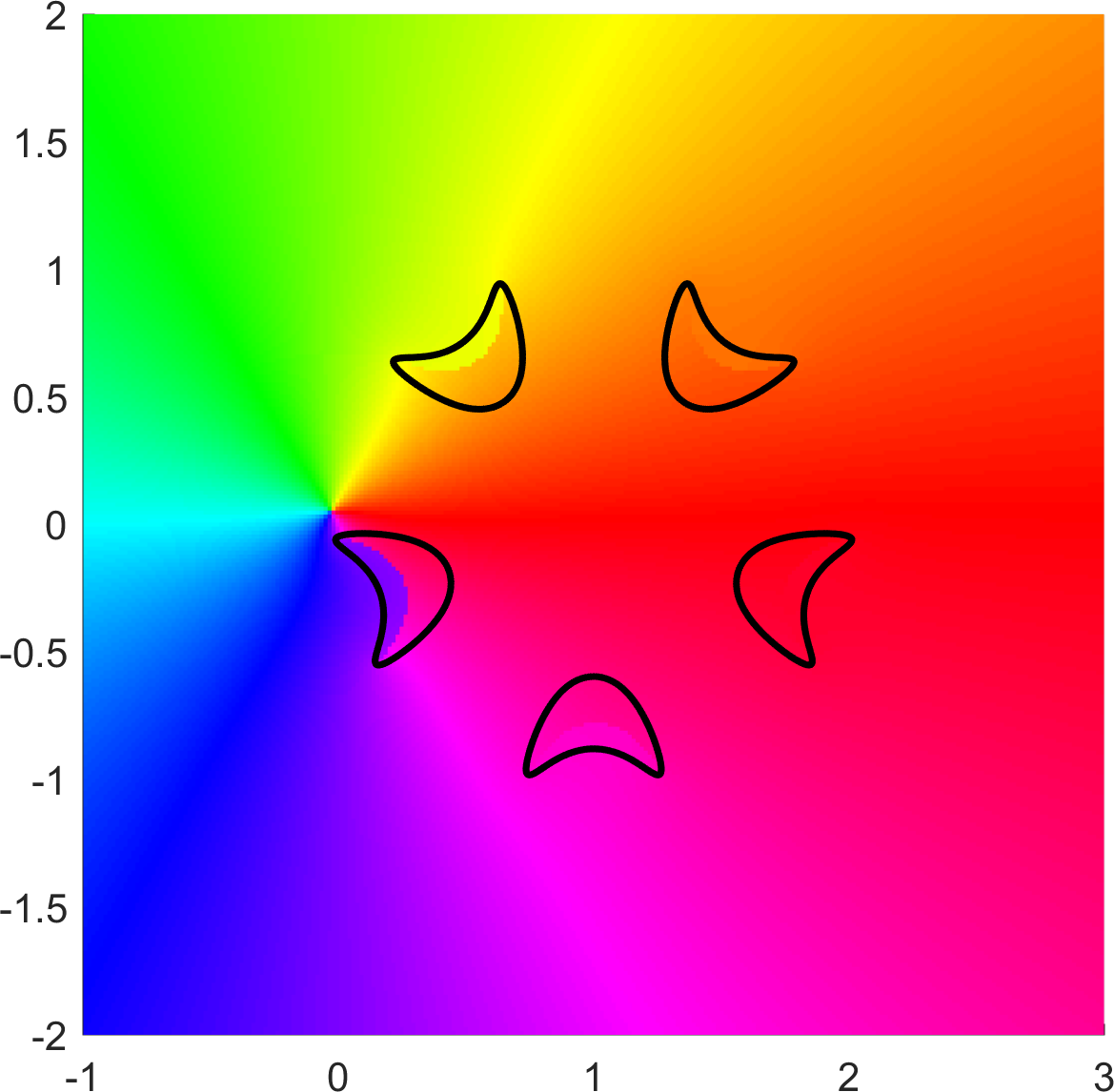}
\includegraphics[width=0.32\linewidth]{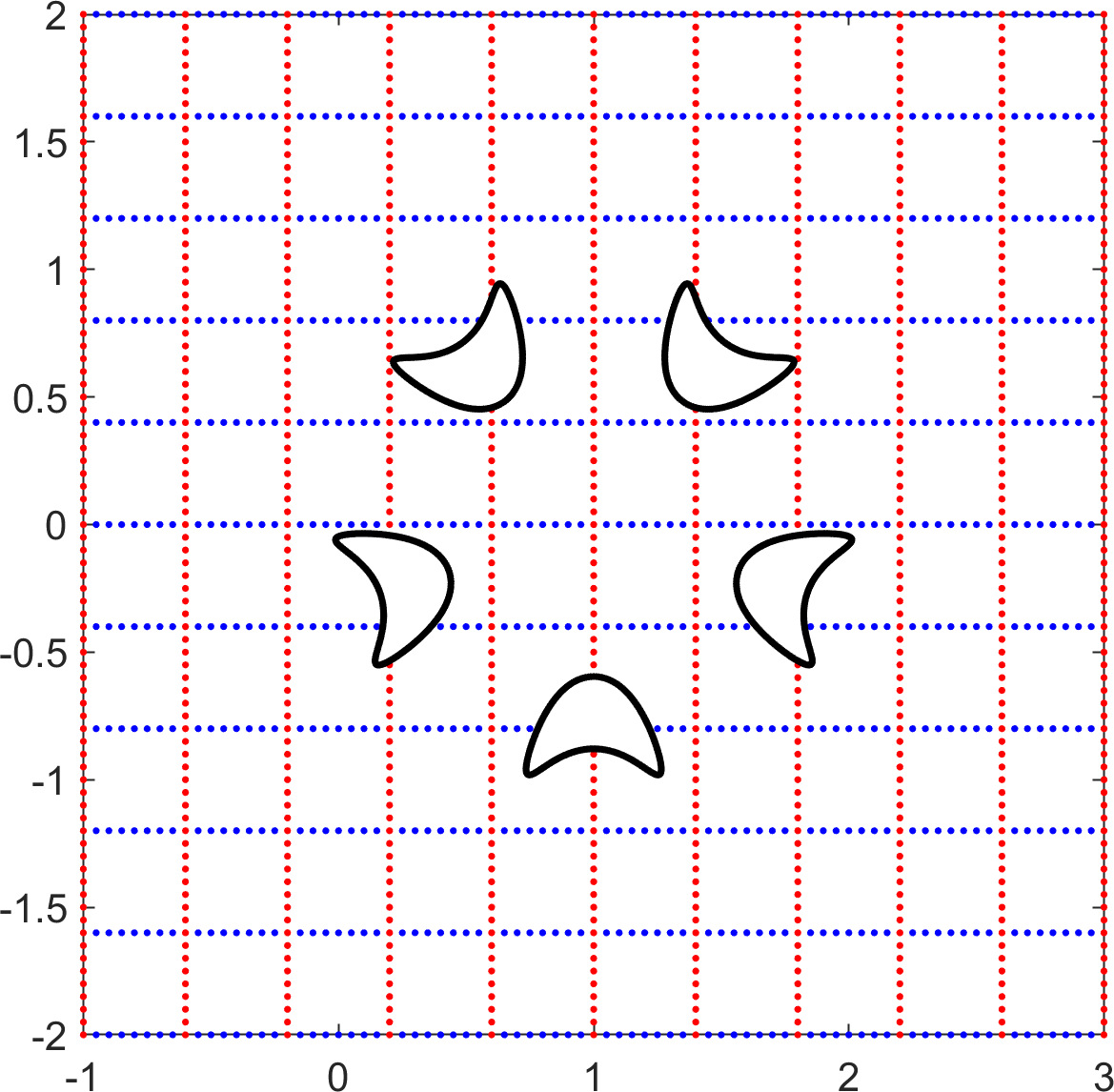}
\includegraphics[width=0.32\linewidth]{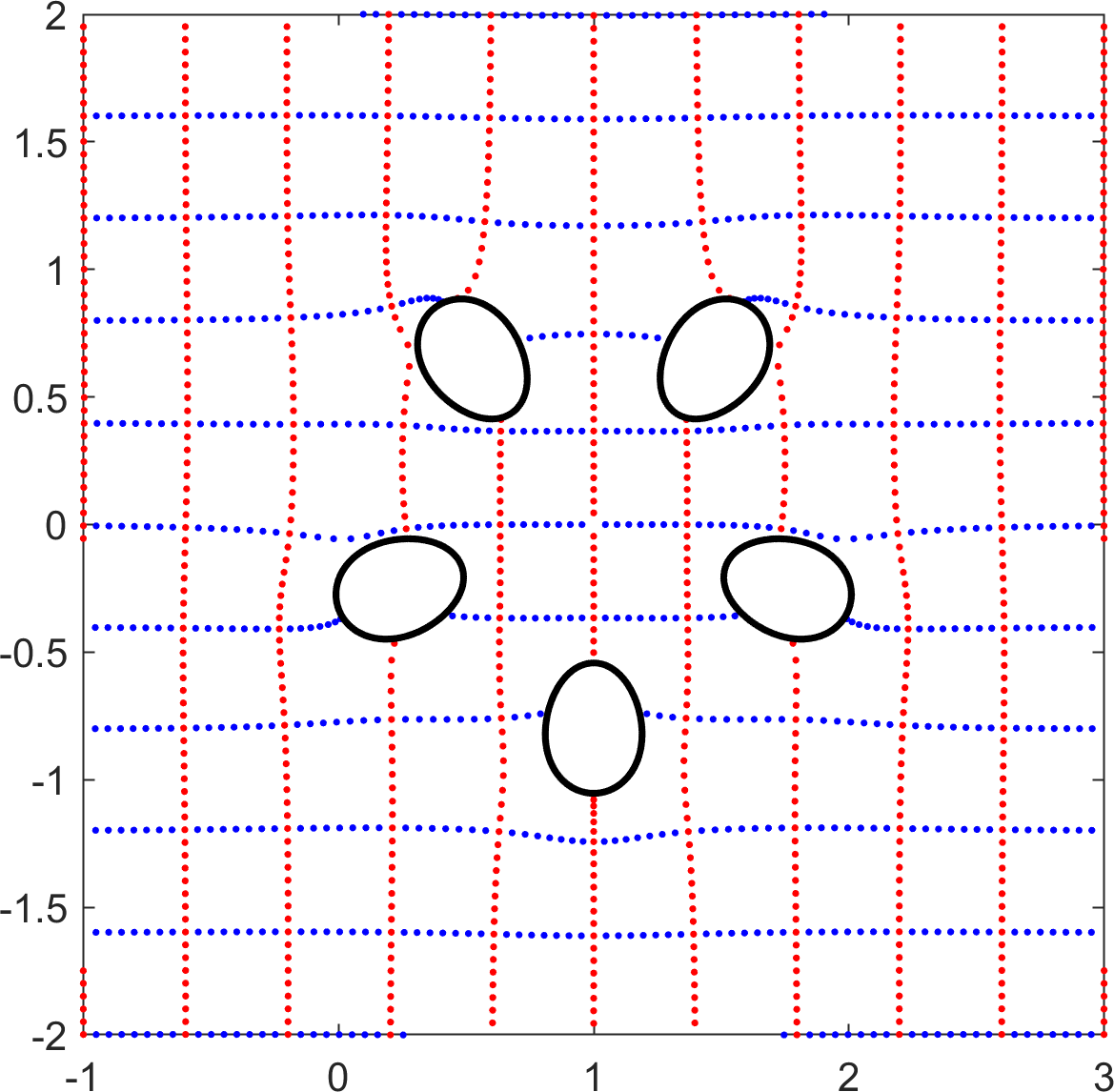}

\includegraphics[width=0.32\linewidth]{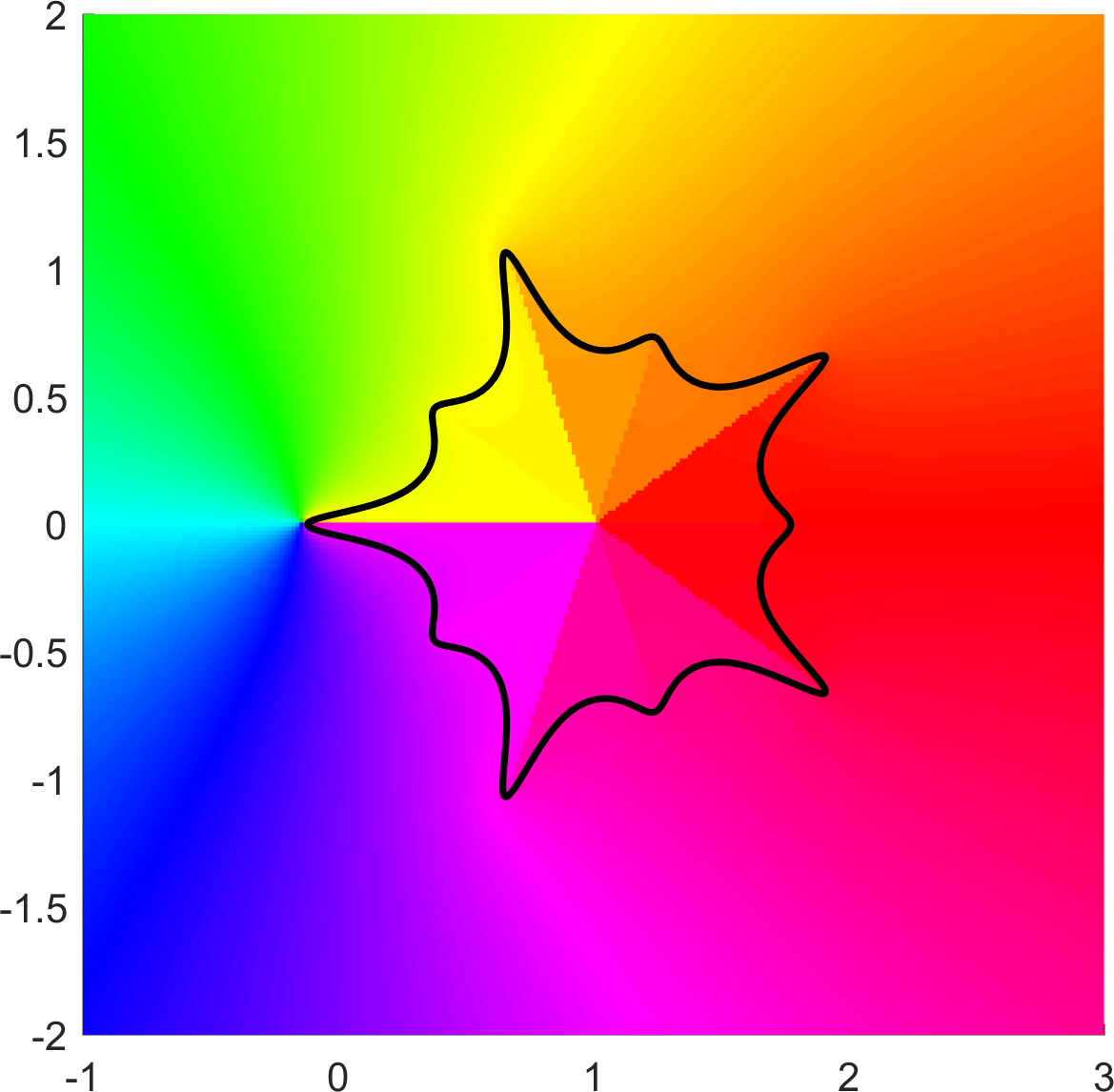}
\includegraphics[width=0.32\linewidth]{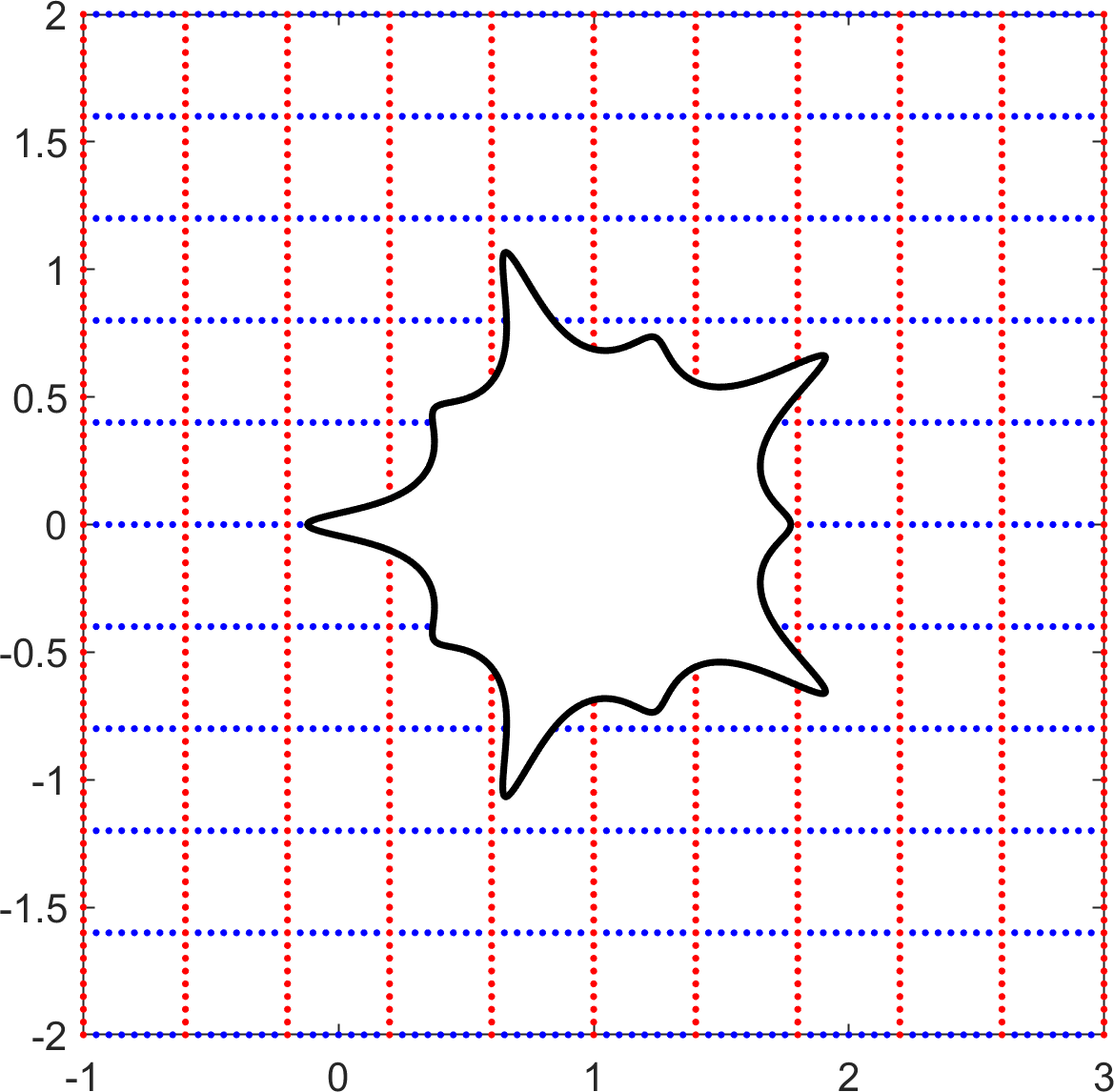}
\includegraphics[width=0.32\linewidth]{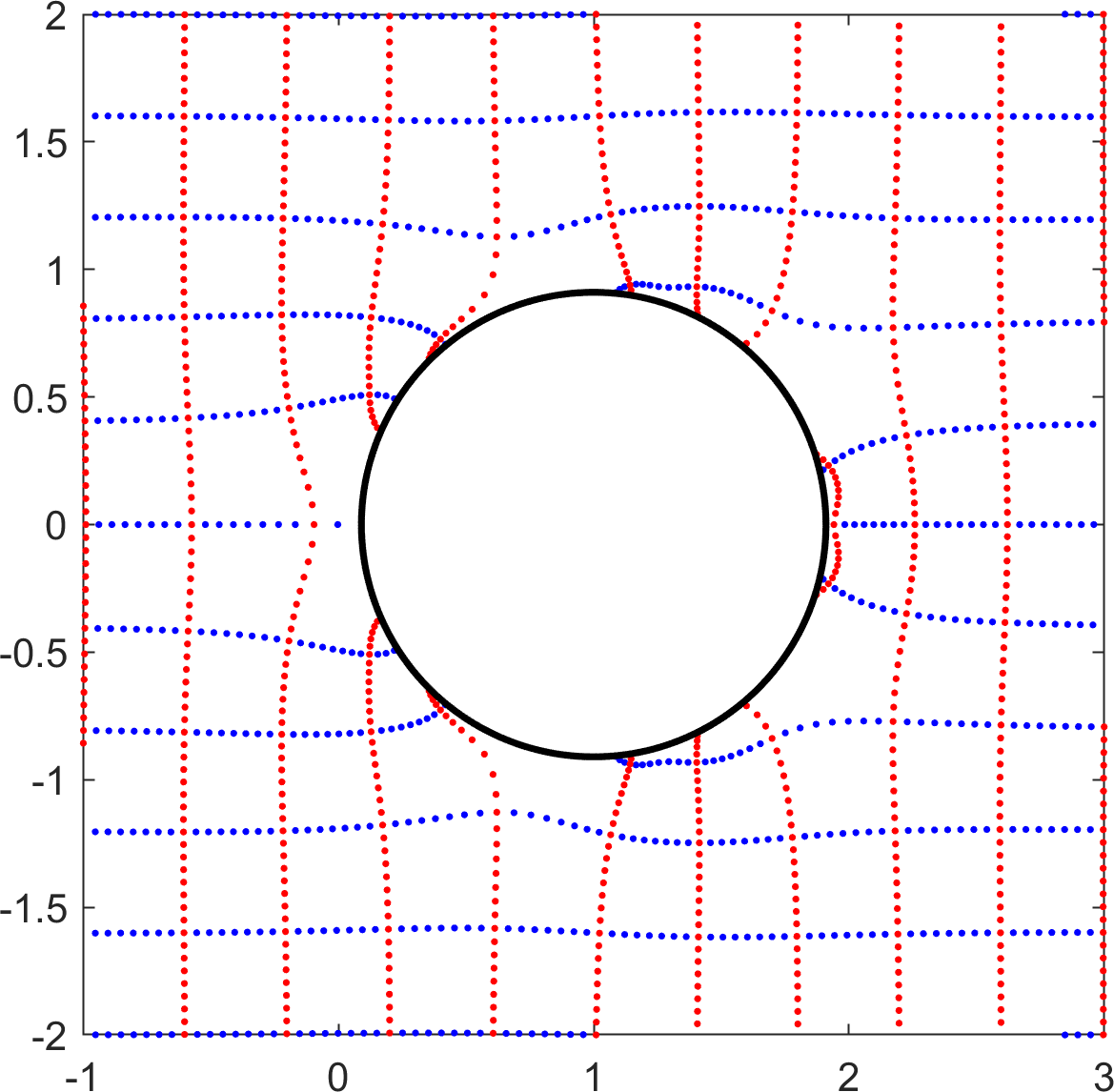}

}
\caption{Set $E = P_n^{-1}(\Omega)$ with a Chebyshev ellipse $\Omega = 
\cE_{1.25}$ and $P_n(z) = (z-1)^5 + \gamma$, with $\gamma = 0.3i \notin \Omega$ 
(top row) and $\gamma = 0.75 \in \Omega$ (bottom row); see 
Example~\ref{ex:zn_ellipse}.
Phase plot of $\Phi$ (left), original and image domains with $\partial E$ 
and $\partial L$ in black (middle and right).}
\label{fig:example_zn_ellipse}
\end{figure}
\end{example}

\begin{example} \label{ex:zn_disk}
Let $P_n(z) = \alpha (z-\beta)^n + \gamma$ and $E = P_n^{-1}(\overline{\bD})$.
\begin{enumerate}
\item If $\gamma \notin \overline{\bD}$ then $\Phi(z) = z$ by 
Proposition~\ref{prop:example_zn}, hence $L = E = \{ z \in \C : \abs{P_n(z)} 
\leq 1 \}$, i.e., $\comp{E}$ is a lemniscatic domain; see also 
Example~\ref{ex:preim_of_unit_disk_with_n_components} for pre-images of 
$\overline{\bD}$ under general polynomials.

\item If $\gamma \in \overline{\bD}$ then $E$ has only one component.  In this 
case $\Phi(z) = z$ if and only if $\gamma = 0$.
Figure~\ref{fig:example_zn_disk} shows an example with $\gamma \in 
\overline{\bD} \setminus \{ 0 \}$, where $E$ is not a lemniscatic domain and 
$\Phi(z) \neq z$.
\end{enumerate}

\begin{figure}[t]
{\centering
\includegraphics[width=0.32\linewidth]{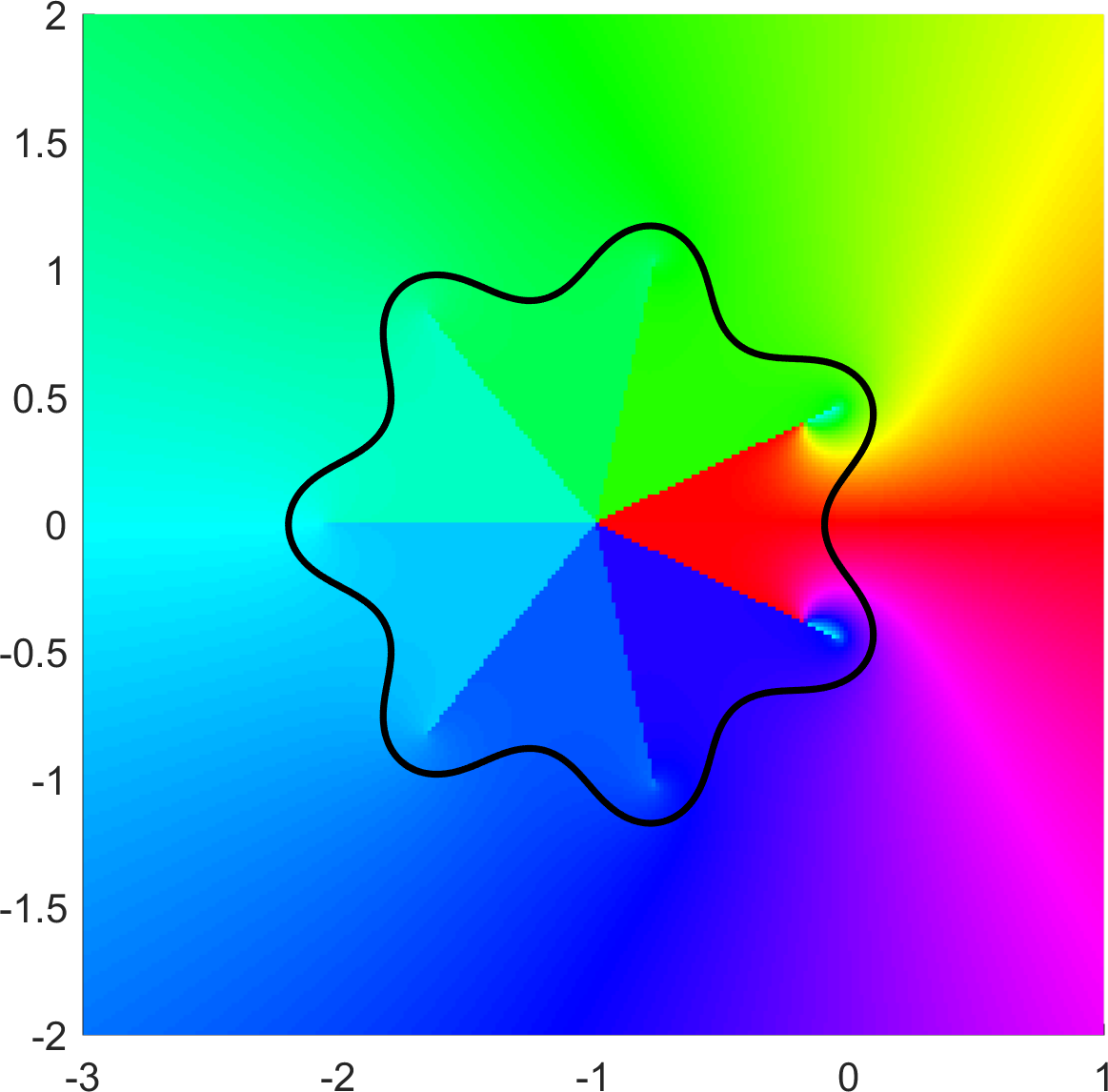}
\includegraphics[width=0.32\linewidth]{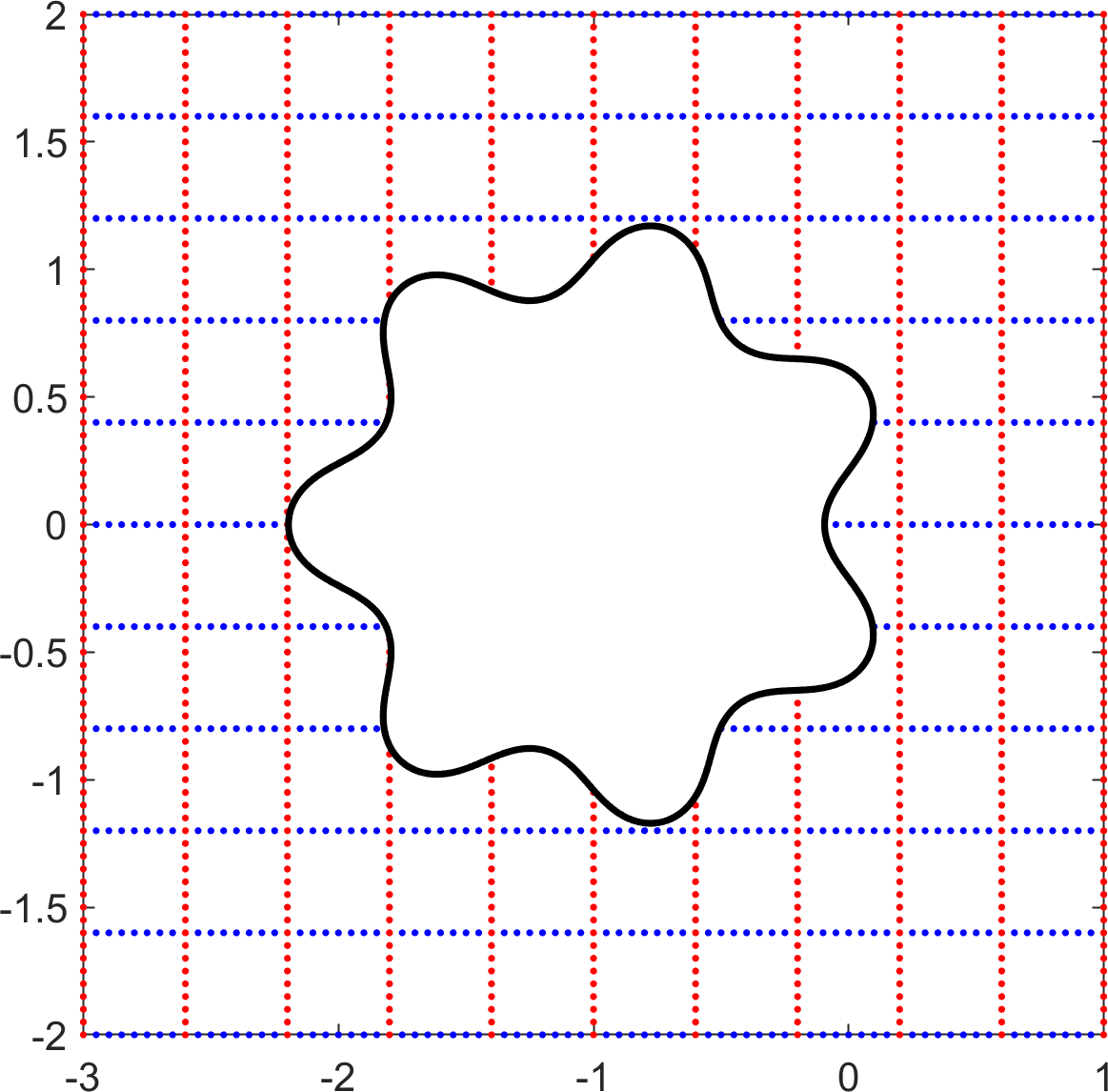}
\includegraphics[width=0.32\linewidth]{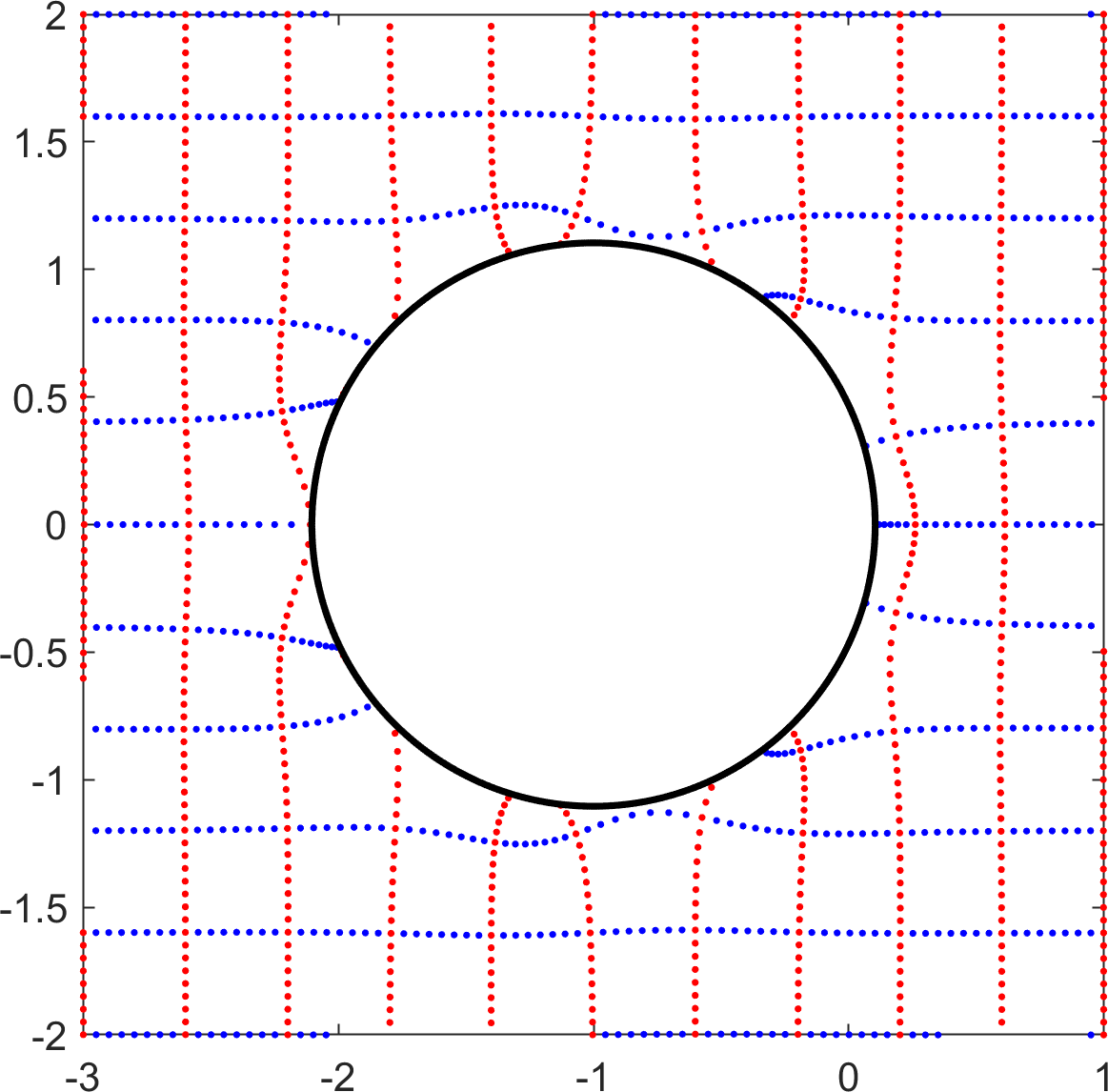}

}
\caption{The set $E = P_n^{-1}(\overline{\bD})$ with $P_n(z) = \frac{1}{2} 
(z+1)^7 + \frac{3}{4}$. Phase plot of $\Phi$ (left), original and image domains 
with $\partial E$ and $\partial L$ in black (middle and right); see 
Example~\ref{ex:zn_disk}.}
\label{fig:example_zn_disk}
\end{figure}
\end{example}

Next, we present an example for Corollary~\ref{cor:connected_pre-image}.

\begin{example} \label{ex:connected_pre-image}
Consider the polynomial
\begin{equation*}
P_4(z) = \frac{8 z^4 - 8 z^2 + \alpha}{\alpha}
\end{equation*}
with $\alpha \geq 1$ from~\cite[Example~(iv)]{Schiefermayr2014}.  Then $E = 
P_4^{-1}(\cc{-1, 1})$ is connected, since the 
critical points of $P_4$ are $0, \pm 1/\sqrt{2}$ with corresponding critical 
values $P_4(0) = 1 \in \cc{-1, 1}$ and $P_4(\pm 1/\sqrt{2}) = 1 - 
\frac{2}{\alpha} \in \cc{-1, 1}$; see Theorem~\ref{thm:E_decomposition}.
By Corollary~\ref{cor:connected_pre-image},
\begin{equation*}
L = \Bigg\{ w \in \C : \abs{w} \leq \capacity(E) = \frac{\alpha^{1/4}}{2} 
\bigg\},
\end{equation*}
and the conformal map is
\begin{equation*}
\Phi : \comp{E} \to \comp{L}, \quad
\Phi(z) = 
\frac{\sqrt[4]{\alpha}}{2} \sqrt[4]{P_4(z) + \sqrt{P_4(z)^2 - 1}},
\end{equation*}
see Figure~\ref{fig:connected_pre-image}.
Since $E^* = E$, we have that $\Phi(z) = \conj{\Phi(\conj{z})}$ and, since 
$\Phi(z) = z + \cO(1/z)$, we have in particular $\Phi(\oo{1, \infty}) = 
\oo{\capacity(E), \infty}$ and $\Phi(\oo{-\infty, -1}) = \oo{-\infty, - 
\capacity(E)}$.
Since $E$ is also symmetric with respect to the imaginary axis, we similarly 
have $\Phi(\oo{0, i \infty}) = \oo{i \capacity(E), i \infty}$ and 
$\Phi(\oo{-i \infty, 0}) = \oo{-i \infty, -i \capacity(E)}$.
Hence, $\Phi$ maps each quadrant to itself.
We use this to determine the correct branch of the fourth root.
\end{example}

\begin{figure}
{\centering
\includegraphics[width=0.32\linewidth]{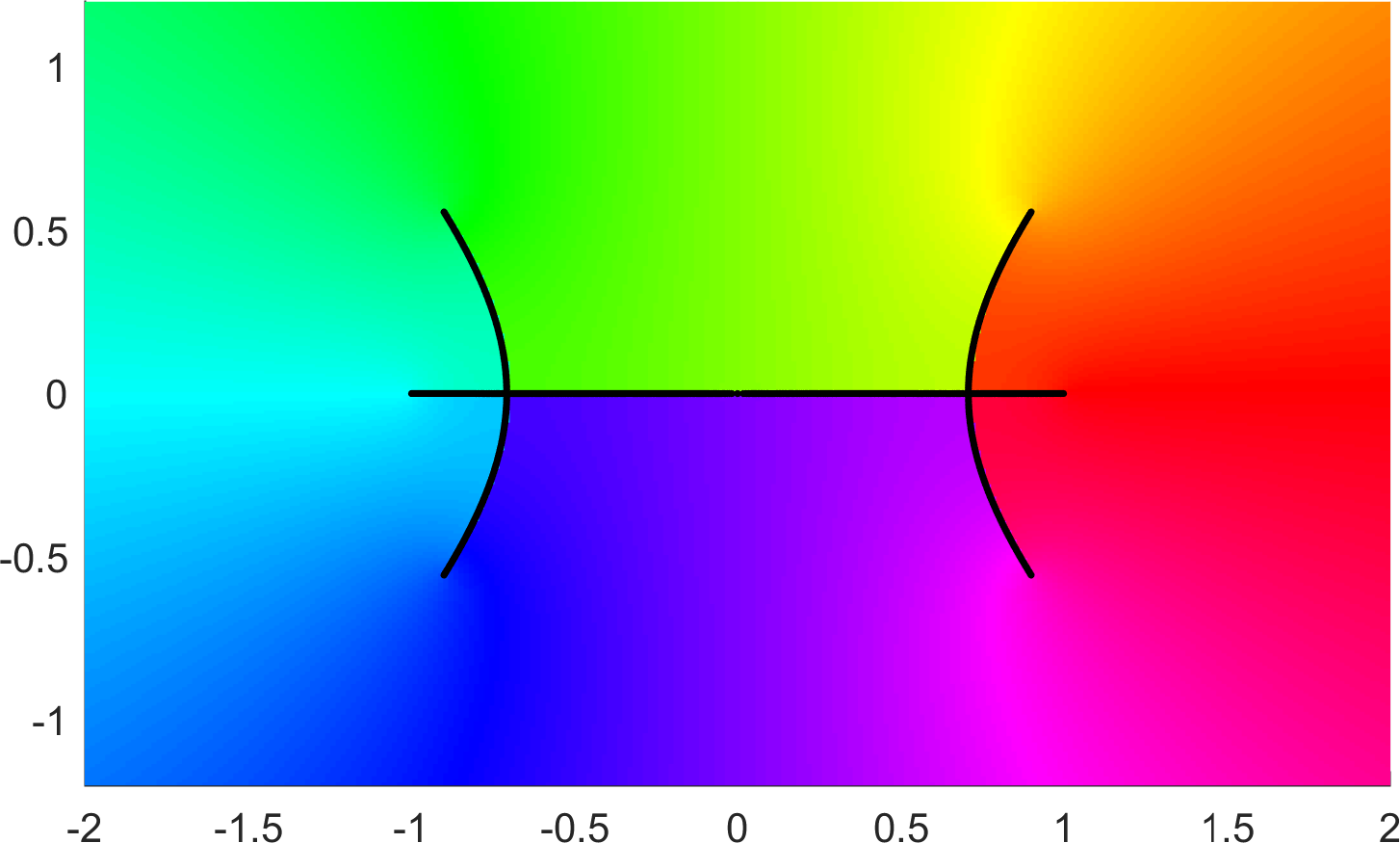}
\includegraphics[width=0.32\linewidth]{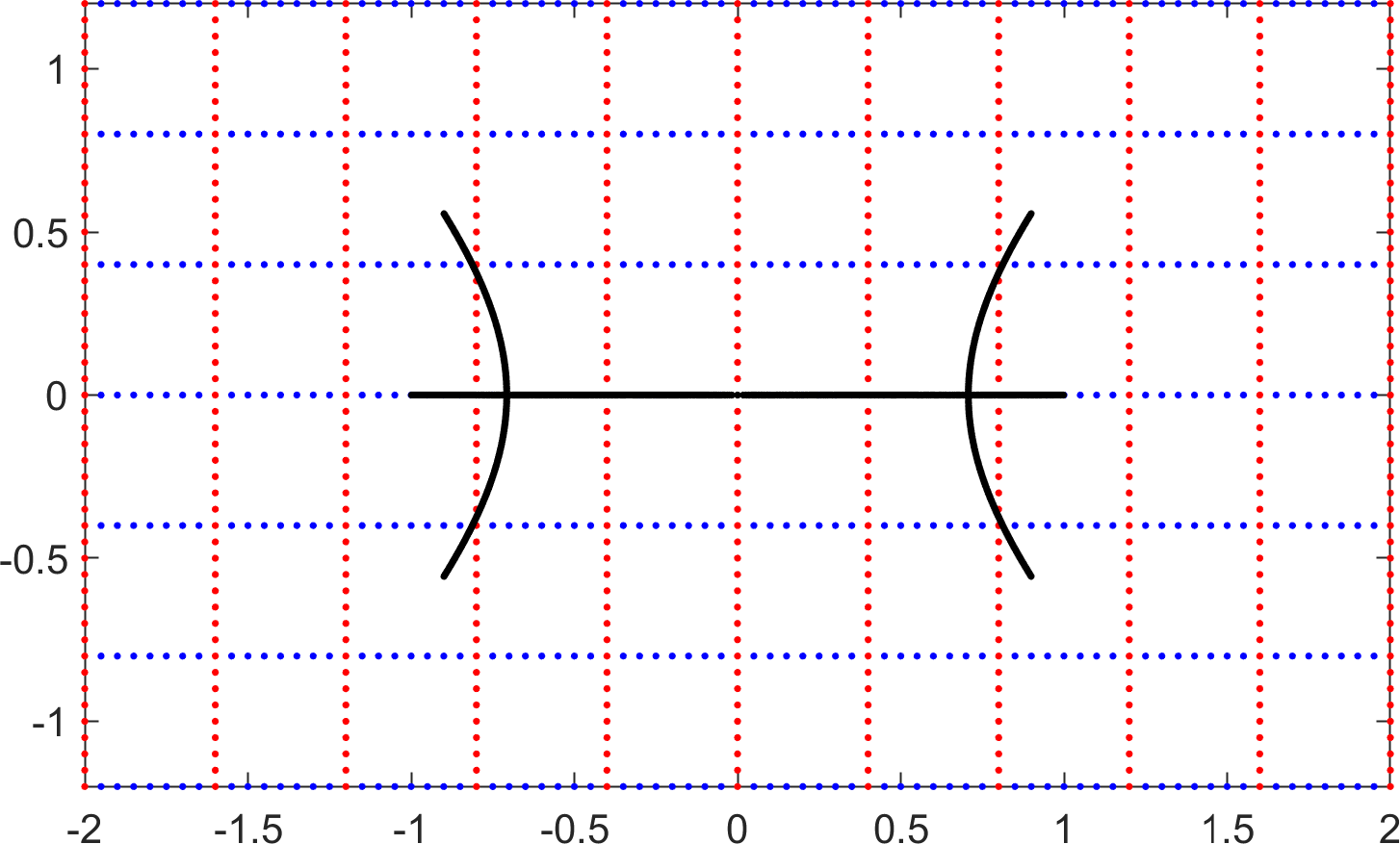}
\includegraphics[width=0.32\linewidth]{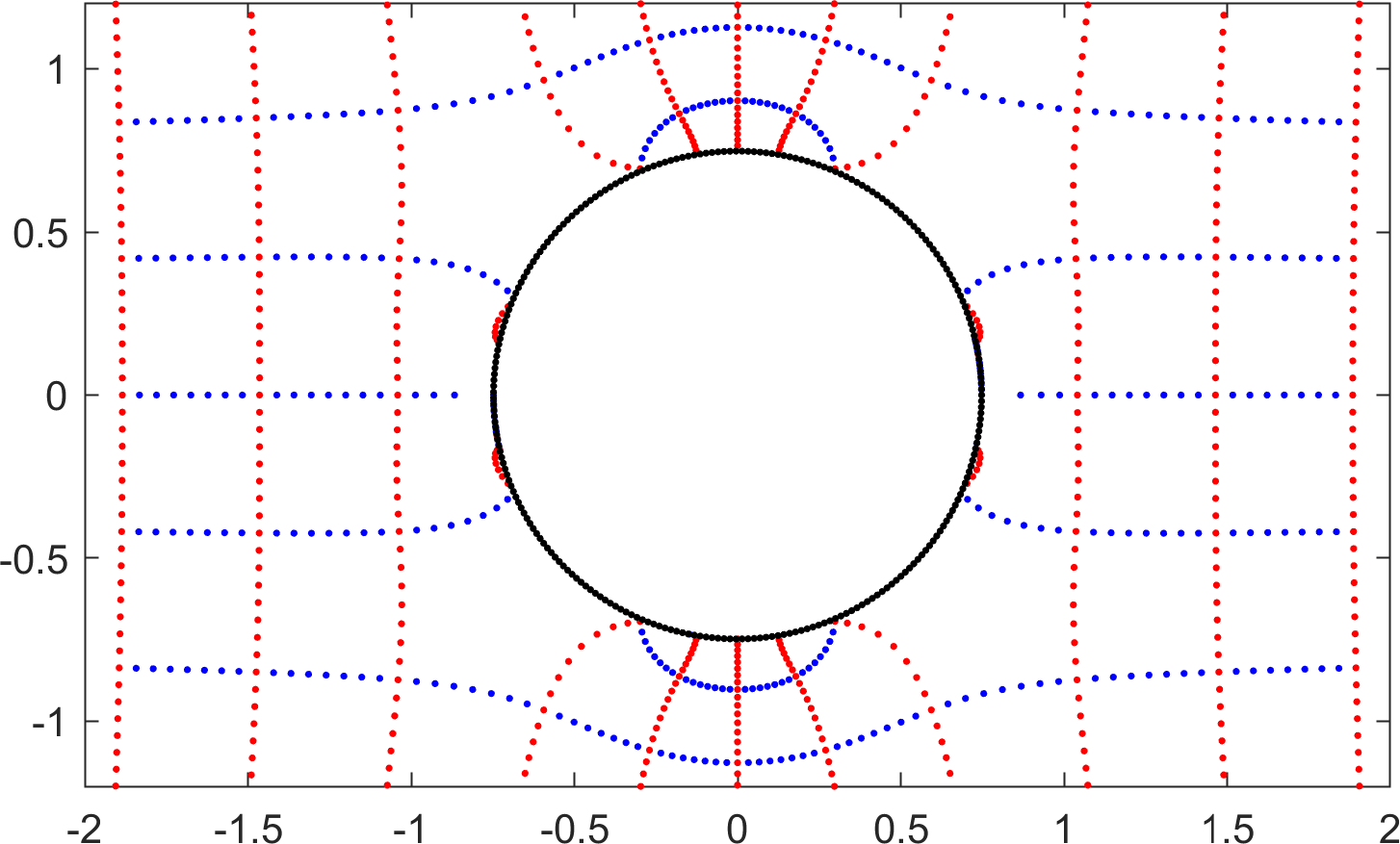}

}
\caption{The set $E = P_4^{-1}(\cc{-1, 1})$ with $\alpha = 5$ in 
Example~\ref{ex:connected_pre-image}.  Phase plot of $\Phi$ (left), original 
and 
image domains with $E$ and $\partial L$ in black (middle and right).}
\label{fig:connected_pre-image}
\end{figure}

\begin{example} \label{ex:preim_of_unit_disk_with_n_components}
Let $P_n(z) = p_n \prod_{j=1}^n (z-b_j)$ be a polynomial of degree $n$.
If $E = P_n^{-1}(\overline{\bD})$ consists of $n$ components then $\comp{E}$ is 
a lemniscatic domain, i.e., $L = E$ with $a_j = b_j$, $m_j = 1/n$, 
$\capacity(E) = \abs{p_n}^{-1/n}$, and $\Phi(z) = z$.
Similarly, if $P_n(z) = p_n \prod_{j=1}^\ell (z - b_j)^{n_j}$ with 
distinct $b_1, \ldots, b_\ell \in \C$ and if $E$ has $\ell$ components, 
then $\comp{E}$ is a lemniscatic domain, $L = E$ with $a_j = b_j$, $m_j = 
n_j/n$, and $\Phi(z) = z$.
\end{example}

\begin{figure}
{\centering
\includegraphics[width=0.4\linewidth]{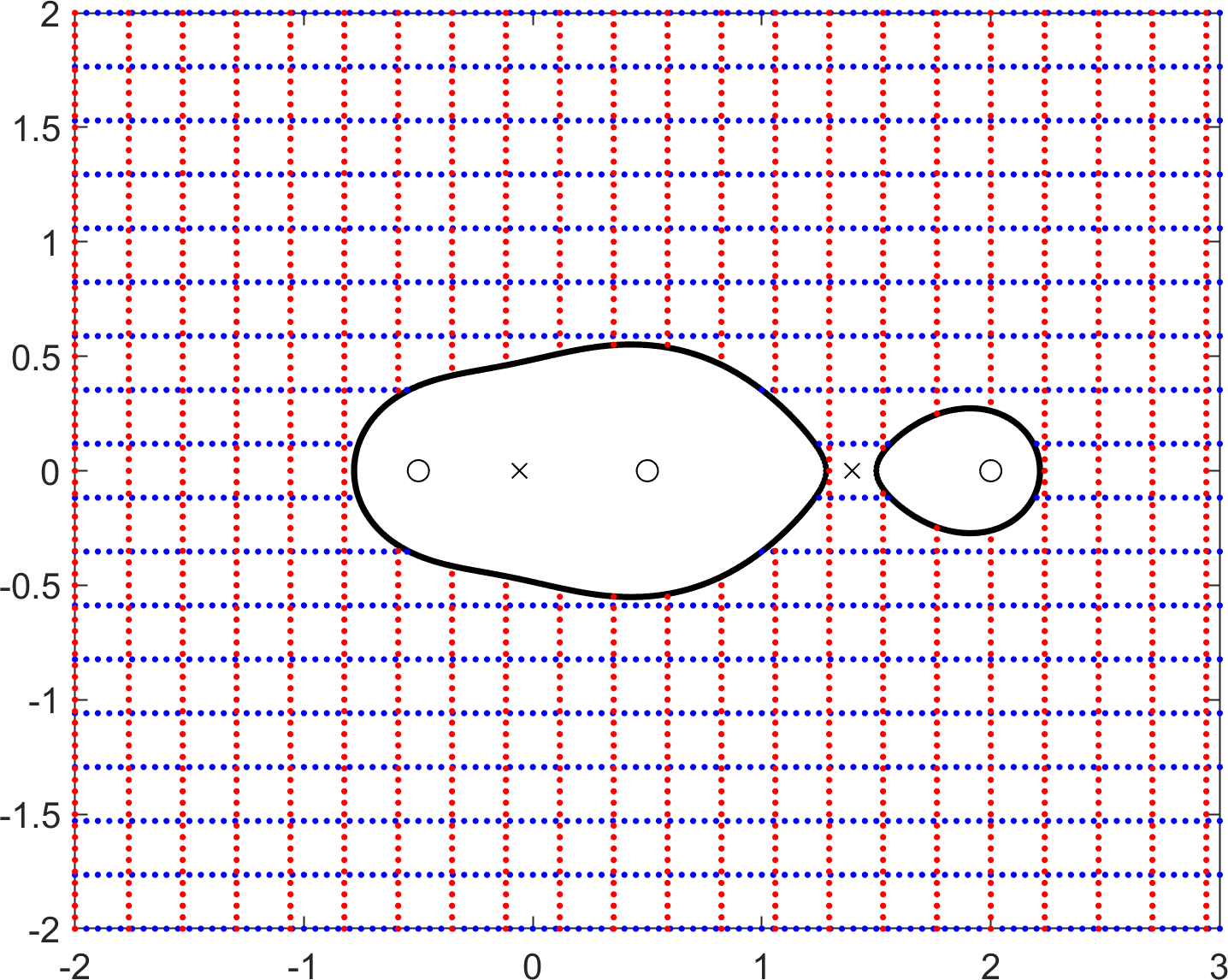}
\quad
\includegraphics[width=0.4\linewidth]{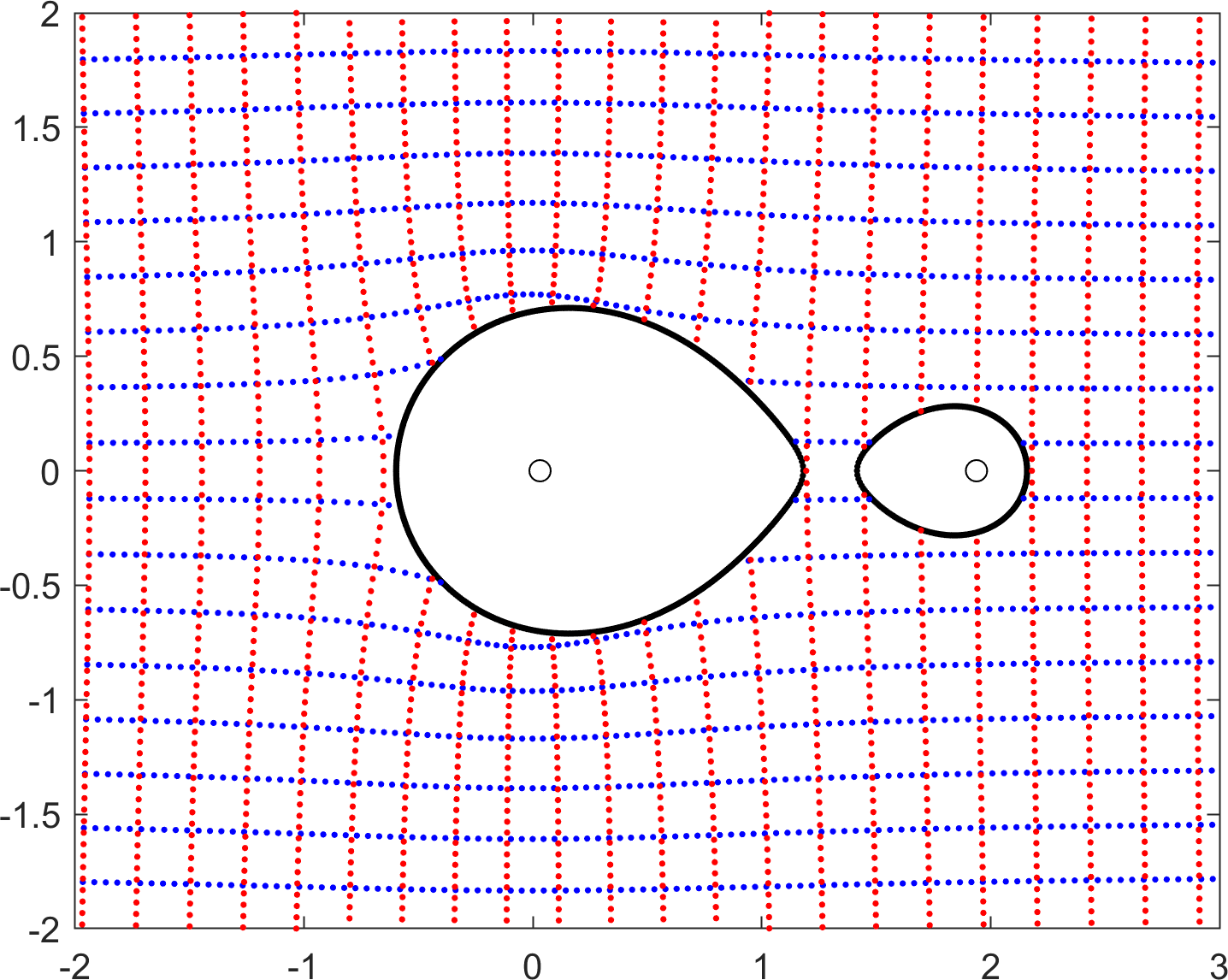}

}
\caption{Pre-image $E = P_3^{-1}(\overline{\bD})$ in Example~\ref{ex:disk}.
Left: $\partial E$ (black line), zeros of $P_3$ (circles) and $P_3'$ (crosses), 
and a cartesian grid.
Right: $\partial L$ (black line), $a_1, a_2$ (circles) and the image of the 
grid under $\Phi$.}
\label{fig:disk}
\end{figure}

Finally, we consider an example for Theorem~\ref{thm:aj_for_two_components}.

\begin{example} \label{ex:disk}
For $\alpha, \beta \in \C$, consider the polynomial
\begin{equation*}
P_3(z) = (z - \alpha) (z^2 - \beta^2)
= z^3 - \alpha z^2 - \beta^2 z + \alpha \beta^2
\end{equation*}
of degree $n = 3$.  The critical points of $P_3$ are
\begin{equation*}
z_\pm = \frac{\alpha \pm \sqrt{\alpha^2 + 3 \beta^2}}{3}.
\end{equation*}
In the case $\alpha = 2$ and $\beta = 1/2$, we have $P_3(z_-) \approx 0.5076 
\in \overline{\bD}$ and $P_3(z_+) \approx 1.9375 \in \C \setminus 
\overline{\bD}$, hence $E = 
P_3^{-1}(\overline{\bD})$ has $\ell = 2$ components by 
Theorem~\ref{thm:E_decomposition}; see Figure~\ref{fig:disk} (left). 
Note that $\comp{E}$ is not a lemniscatic domain (in contrast to the case 
considered in Example~\ref{ex:preim_of_unit_disk_with_n_components}).
Write $E = E_1 \cup E_2$, where $E_1$ is the component on the left (with 
$\pm \beta \in E_1$).  Then $m_1 = 2/3$ and $m_2 = 1/3$ by 
Theorem~\ref{thm:exponents_poly_preimage}.
Moreover, $E_1^* = E_1$ and $E_2^* = E_2$, since $P_n$ is real and 
$\overline{\bD}$ is symmetric with respect to the real line,
which implies that $a_1, a_2 \in \R$ by Theorem~\ref{thm:aj_real}.
Then, by Theorem~\ref{thm:aj_for_two_components},
\begin{equation*}
\left( a_2 - \frac{\alpha}{3} \right)^3 = - 2 P_3(z_+) \in \R.
\end{equation*}
Since $a_2 - \frac{\alpha}{3}$ is real, taking the real third root yields
\begin{equation*}
a_2 = \frac{\alpha}{3} + \sqrt[3]{-2 P_3(z_+)} \approx 1.9375
\quad \text{and} \quad
a_1 = \frac{1}{3} \alpha - \frac{1}{2} \sqrt[3]{-2 P_3(z_+)} \approx 0.0313.
\end{equation*}
Moreover, $\capacity(E) = 1$ by~\eqref{eqn:capacity_poly_preimage}, hence
\begin{equation*}
L = \{ w \in \C : \abs{w - a_1}^{2/3} \abs{w-a_2}^{1/3} \leq 1 \}.
\end{equation*}
Here, $Q(w) = (w-a_1)^2 (w-a_2)^1$, hence
\begin{equation*}
\Phi(z) = Q^{-1}(P_3(z)),
\end{equation*}
with a branch of $Q^{-1}$ such that $\Phi(z) = z + \cO(1/z)$ at infinity.
Here, we can obtain the boundary values of $\Phi$ for $z \in \partial E$ by 
solving $Q(w) = P_3(z)$ and identifying the boundary points in the correct way.
Then, since $\Phi(z) - z$ is analytic in $\comp{E}$ and zero 
at infinity, we have
\begin{equation} \label{eqn:Phi_as_intergal}
\Phi(z) = z + \frac{1}{2 \pi i} \int_{\partial E} \frac{\Phi(\zeta) - 
\zeta}{\zeta - z} \, d\zeta, \quad z \in \C \setminus E,
\end{equation}
where $\partial E$ is negatively oriented, such that $\comp{E}$ 
lies to the left of $\partial E$.  Figure~\ref{fig:disk} also shows a cartesian 
grid (left) and its image under $\Phi$ (right).  For the computation, we 
numerically approximate the integral in~\eqref{eqn:Phi_as_intergal} with the 
trapezoidal rule.
\end{example}

\appendix
\section{Appendix}

\begin{lemma} \label{lem:symmetry_of_Green_function}
Let $E \subseteq \C$ be compact, such that $\comp{E}$ has a 
Green's function $g_E$.
If $E^* = E$, then $g_E(\conj{z}) = g_E(z)$ and $\partial_z g_E(\conj{z}) = 
\conj{\partial_z g_E(z)}$.
Moreover, the critical points of $g_E$ are real or appear in complex conjugate 
pairs.
\end{lemma}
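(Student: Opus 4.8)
The plan is to prove the three assertions in order, obtaining the second and third as consequences of the first. The key tool is the uniqueness of the Green's function with pole at infinity.

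For the identity $g_E(\conj{z}) = g_E(z)$, I would set $h(z) \coloneq g_E(\conj{z})$. Since $E^* = E$ forces $\comp{E}$ to be symmetric with respect to $\R$, the function $h$ is well-defined on all of $\comp{E}$, and I would check that it satisfies the three defining properties of the Green's function of $\comp{E}$. Harmonicity holds because $z \mapsto \conj{z}$ acts as $y \mapsto -y$ and the Laplacian is invariant under this reflection; the boundary values vanish because $\partial E$ is symmetric and $g_E \to 0$ there; and at infinity $h(z) - \log\abs{z} = g_E(\conj{z}) - \log\abs{\conj{z}} \to -\log(\capacity(E))$, since $\conj{z} \to \infty$ as $z \to \infty$. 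By uniqueness of the Green's function, $h = g_E$, which is the first claim.

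For the Wirtinger derivative identity, I would differentiate $g_E(\conj{z}) = g_E(z)$ with $\partial_z$. Writing $\phi \coloneq \partial_z g_E$, which is analytic since $g_E$ is harmonic (as noted just before the lemma), the chain rule for Wirtinger derivatives gives
\[
\partial_z\bigl(g_E(\conj{z})\bigr) = (\partial_w g_E)(\conj{z})\cdot \partial_z \conj{z} + (\partial_{\conj{w}} g_E)(\conj{z})\cdot \partial_z z = (\partial_{\conj{w}} g_E)(\conj{z}),
\]
using $\partial_z \conj{z} = 0$ and $\partial_z z = 1$. Because $g_E$ is real-valued one has $\partial_{\conj{w}} g_E = \conj{\partial_w g_E}$, so the left-hand side equals $\conj{\phi(\conj{z})}$, while the right-hand side of the differentiated identity is $\phi(z)$. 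This yields $\conj{\phi(\conj{z})} = \phi(z)$, i.e.\ $\partial_z g_E(\conj{z}) = \conj{\partial_z g_E(z)}$.

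Finally, since $g_E$ is harmonic and real-valued, $z_0$ is a critical point of $g_E$ precisely when $\phi(z_0) = \partial_z g_E(z_0) = 0$ (the two real partial derivatives vanish simultaneously exactly when this complex derivative does). The symmetry just established gives $\phi(\conj{z_0}) = \conj{\phi(z_0)} = 0$ whenever $\phi(z_0) = 0$, so the zero set of the analytic function $\phi$ --- equivalently the set of critical points of $g_E$ --- is invariant under conjugation. Hence each critical point is either real or paired with its distinct complex conjugate. I expect the only step requiring genuine care to be the chain-rule computation in the second paragraph: one must track which slot of $g_E$ is differentiated and use real-valuedness to convert $\partial_{\conj{w}}$ into a conjugate of $\partial_w$. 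The first step is routine given uniqueness, and the third is immediate once the symmetry of $\phi$ is in hand.
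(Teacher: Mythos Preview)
Your proof is correct and follows essentially the same approach as the paper: uniqueness of the Green's function for the first identity, then a derivative computation for the second, then the observation that zeros of the analytic function $\partial_z g_E$ come in conjugate pairs. The only cosmetic difference is in the middle step, where the paper expands $\partial_z g_E$ directly in real coordinates (deriving $\partial_x g(x,y)=\partial_x g(x,-y)$ and $\partial_y g(x,y)=-\partial_y g(x,-y)$ first) while you apply the Wirtinger chain rule to $z\mapsto g_E(\conj{z})$; both routes yield the same identity.
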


\begin{proof}
Since $E^* = E$, the function $z \mapsto g_E(\conj{z})$ is also a Green's 
function with pole at infinity of $\comp{E}$, hence 
$g_E(\conj{z}) = g_E(z)$ for all $z \in \comp{E}$ by the 
uniqueness of the Green's function.
Write $g(x,y) = g_E(z)$, then $g(x,y) = g(x,-y)$ and
\begin{equation} \label{eqn:partial_deriv_of_green_function}
\frac{\partial g}{\partial x}(x,y) = \frac{\partial g}{\partial x}(x,-y),
\quad
\frac{\partial g}{\partial y}(x,y) = - \frac{\partial g}{\partial y}(x,-y),
\end{equation}
hence
\begin{equation*}
2 \partial_z g_E(z)
= \frac{\partial g}{\partial x}(x,y) - i \frac{\partial g}{\partial y}(x,y)
= \frac{\partial g}{\partial x}(x,-y) + i \frac{\partial g}{\partial y}(x,-y)
= \conj{2 (\partial_z g_E)(\conj{z})}.
\end{equation*}
The critical points of $g_E$ are the zeros of the analytic function $\partial_z 
g_E$.
Since $\partial_z g_E(\conj{z}) = \conj{\partial_z g_E(z)}$, if $z_*$ is a zero 
of $\partial_z g_E$ then also $\conj{z}_*$ is a zero.
\end{proof}

\begin{lemma} \label{lem:compact_cap_R_is_connected}
Let $K \subseteq \C$ be a non-empty compact, simply connected set with $K^* = 
K$, then $K \cap \R$ is either an interval or a single point.
\end{lemma}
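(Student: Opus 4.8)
The plan is to reduce the statement to the connectedness of the trace of $\widehat{\R} \coloneq \R \cup \{ \infty \}$ in the complementary domain $\comp{K}$, and then to establish that by transporting the reflection symmetry through a Riemann map. First I would dispose of the degenerate case: if $K$ is a single point, then $K^* = K$ forces that point to be real, so $K \cap \R = K$ is a single point and we are done. Hence assume $K$ has at least two points. Since $K$ is compact, connected, and has connected complement (which is what \emph{simply connected} amounts to for a continuum), $\comp{K} = \widehat{\C} \setminus K$ is a simply connected domain, proper in $\widehat{\C}$, whose complement $K$ has at least two points. The Riemann mapping theorem therefore provides a conformal bijection $\phi : \comp{K} \to \bD$.

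Second, I would exploit the symmetry $K^* = K$. Normalizing $\phi$ by $\phi(\infty) = 0$ together with a positivity condition on the leading Laurent coefficient of $\phi$ at $\infty$ determines $\phi$ uniquely. The map $z \mapsto \overline{\phi(\overline{z})}$ is again a conformal bijection $\comp{K} \to \bD$ (here $K^* = K$ is used to guarantee that the domain is preserved) satisfying the same normalization, so by uniqueness $\phi(\overline{z}) = \overline{\phi(z)}$ for all $z \in \comp{K}$. Consequently, for $z \in \comp{K}$ one has $\phi(z) \in \R$ if and only if $\phi(\overline{z}) = \phi(z)$, i.e.\ (by injectivity) $\overline{z} = z$; thus $\{ z \in \comp{K} : \phi(z) \in \R \} = \comp{K} \cap \widehat{\R}$, and $\phi$ restricts to a homeomorphism of $\comp{K} \cap \widehat{\R}$ onto $\bD \cap \R = \oo{-1, 1}$. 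In particular, $\comp{K} \cap \widehat{\R}$ is connected.

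Finally I would translate this back to $K$. Since $\comp{K} \cap \widehat{\R}$ is a connected, relatively open subset of the circle $\widehat{\R}$, its complement $K \cap \widehat{\R}$ is a closed sub-arc of $\widehat{\R}$, hence empty, a single point, or an arc. As $\infty \in \comp{K}$ we have $K \cap \widehat{\R} = K \cap \R$, so $K \cap \R$ is empty, a single point, or a closed bounded interval. To rule out the empty case I would argue from connectedness and symmetry: if $K$ had a non-real point $z_0$, then $\overline{z_0} \in K$ by $K^* = K$, so $K$ would meet both open half-planes $\{ \im z > 0 \}$ and $\{ \im z < 0 \}$; were $K \cap \R = \emptyset$, these two disjoint open sets would disconnect $K$, a contradiction. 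Hence $K \cap \R \neq \emptyset$ (if instead $K \subseteq \R$ this is immediate), which leaves exactly a single point or an interval.

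The step I expect to require the most care is the uniqueness-via-symmetry argument: one must fix a normalization of the Riemann map that is manifestly stable under $z \mapsto \overline{\phi(\overline{z})}$ (sending $\infty \mapsto 0$ with positive leading coefficient is convenient, and uses $\infty \in \comp{K}$) and then verify that the reflected map genuinely lies in the same normalized family, so that uniqueness applies. A purely topological alternative would avoid conformal maps entirely: pick $c \in \R \setminus K$ strictly between two points of $K \cap \R$, join $c$ to $\infty$ by an arc in the connected set $\comp{K}$, and show that this arc together with its reflection separates those two points of $K$ in $\widehat{\C}$, contradicting connectedness of $K$. The delicate point there is controlling the self-intersections of the arc and its reflection in order to extract a genuine Jordan separation, which is why I would prefer the conformal route.
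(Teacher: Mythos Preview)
Your argument is correct. You take a genuinely different route from the paper: the paper gives a two-line topological argument, asserting that if $K\cap\R$ were disconnected then ``the symmetry and simply-connectedness of $K$ would imply that $\comp{K}$ is not connected,'' and leaves it at that. That is exactly the separation argument you sketch in your final paragraph and correctly flag as delicate---one needs to extract from a path in $K$ between two real points (together with its reflection) a Jordan curve that actually encloses the gap point, and the paper does not do this. Your conformal approach sidesteps this entirely: by showing that the normalized Riemann map of $\comp{K}$ commutes with conjugation, you get directly that $\comp{K}\cap\widehat{\R}$ is homeomorphic to $\oo{-1,1}$, hence connected, and the rest follows cleanly. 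The cost is invoking the Riemann mapping theorem for a statement that is ultimately topological; the benefit is that every step is fully justified, and the symmetry argument for $\phi(\conj{z})=\conj{\phi(z)}$ is standard and exactly parallel to what the paper does elsewhere (Lemma~\ref{lem:symmetry_E_Phi}). Your handling of the degenerate single-point case and of the non-emptiness of $K\cap\R$ is also more explicit than the paper's.
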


\begin{proof}
Since $K^* = K$ and $K$ is connected, $K \cap \R$ is not empty.
Since $\comp{K}$ is connected, $K \cap \R$ must be connected
(otherwise the symmetry and simply-connectedness of $K$ would imply that 
$\comp{K}$ is not connected).
Thus, $K \cap \R$ is a point or an interval.
\end{proof}

\begin{lemma} \label{lem:symmetric_integral_is_real}
Let $\gamma$ be a smooth Jordan curve symmetric with respect to the real line 
and let $f$ be integrable with $f(\conj{z}) = \conj{f(z)}$ on $\gamma$.  Then
\begin{equation*}
\frac{1}{2 \pi i} \int_\gamma f(z) \, dz \in \R.
\end{equation*}
\end{lemma}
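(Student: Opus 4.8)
The plan is to show that the integral $I \coloneq \int_\gamma f(z)\,dz$ is purely imaginary, because dividing a purely imaginary number by $2\pi i$ yields a real number, which is exactly the claim. First I would fix a parametrization $\gamma \colon \cc{\alpha, \beta} \to \C$ and write $I = \int_\alpha^\beta f(\gamma(t))\,\gamma'(t)\,dt$. Taking complex conjugates, and using that conjugation commutes with the real integral and with the $t$-derivative, I obtain $\conj{I} = \int_\alpha^\beta \conj{f(\gamma(t))}\,\conj{\gamma'(t)}\,dt$, where $\conj{\gamma'(t)} = \tfrac{d}{dt}\conj{\gamma(t)}$.

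The next step is to invoke the hypothesis $f(\conj{z}) = \conj{f(z)}$, which replaces $\conj{f(\gamma(t))}$ by $f(\conj{\gamma(t)})$. Setting $\eta(t) \coloneq \conj{\gamma(t)}$, this identifies $\conj{I}$ as a contour integral of the \emph{same} function $f$ over the reflected curve $\eta$:
\begin{equation*}
\conj{I} = \int_\alpha^\beta f(\eta(t))\,\eta'(t)\,dt = \int_\eta f(z)\,dz.
\end{equation*}
Since $\gamma$ is symmetric with respect to the real line, the image of $\eta$ is exactly $\gamma^* = \gamma$, so $\eta$ and $\gamma$ trace the same Jordan curve and only their orientations remain to be compared.

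The crucial point — and the step I expect to require the most care — is this orientation bookkeeping. The reflection $z \mapsto \conj{z}$ is an orientation-reversing homeomorphism of $\C$ that fixes the symmetric Jordan curve setwise and maps its interior (the bounded component of the complement of $\gamma$) to itself; consequently $\eta$ traverses $\gamma$ with the \emph{opposite} orientation, so that $\int_\eta f(z)\,dz = -\int_\gamma f(z)\,dz = -I$. Combining this with the previous display gives $\conj{I} = -I$, hence $I \in i\R$, and therefore $\frac{1}{2\pi i}\int_\gamma f(z)\,dz = \frac{I}{2\pi i} \in \R$. I would finally remark that the conclusion is insensitive to the orientation chosen for $\gamma$, since reversing it merely changes $I$ into $-I$, which is still purely imaginary.
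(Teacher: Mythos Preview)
Your argument is correct: conjugating $I$, applying the hypothesis $f(\conj{z})=\conj{f(z)}$, and recognizing the result as the integral of $f$ over the reflected curve $\eta=\conj{\gamma}$ gives $\conj{I}=-I$, since reflection in the real axis is orientation-reversing and fixes the symmetric Jordan curve setwise. Your justification of the orientation reversal via the bounded complementary component is fine; one can also verify it by a one-line winding-number computation.

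The paper reaches the same conclusion $I\in i\R$ but by a slightly different route: it decomposes $\gamma=\gamma_1+\gamma_2$ into its upper and lower arcs, observes that symmetry forces $\gamma_2=-\conj{\gamma}_1$, and then computes directly that
\[
\int_\gamma f(z)\,dz=\int_{\gamma_1} f\,dz-\int_{\conj{\gamma}_1} f\,dz
=2i\int_a^b \im\bigl(f(\gamma_1(t))\gamma_1'(t)\bigr)\,dt.
\]
The advantage of the paper's half-curve decomposition is that it replaces your global orientation argument with an explicit parametric calculation, so no topological reasoning is needed. Your approach, by contrast, is more conceptual and applies verbatim without choosing where to cut the curve. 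Both are short and essentially equivalent; neither is preferable in any substantive way.
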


\begin{proof}
Since $\gamma$ is symmetric with respect to the real line, we can write $\gamma 
= \gamma_1 + \gamma_2$ with $\gamma_2 \coloneq - \conj{\gamma}_1$.  Then
\begin{equation*}
\begin{split}
\int_\gamma f(z) \, dz
&= \int_{\gamma_1} f(z) \, dz - \int_{\conj{\gamma}_1} f(z) \, dz
= \int_a^b f(\gamma_1(t)) \cdot \gamma_1'(t) \, dt - 
\int_a^b f(\conj{\gamma_1(t)}) \cdot \conj{\gamma_1'(t)} \, dt \\
&= 2i \int_a^b \im (f(\gamma_1(t)) \cdot \gamma_1'(t)) \, dt,
\end{split}
\end{equation*}
which yields the result.
\end{proof}

Though the following theorem must be known, we did not find it in the 
literature.  For completeness, we include a proof.

\begin{theorem} \label{thm:cK_is_connected}
Let $P$ be a non-constant polynomial and $\Omega \subseteq \C$ be a 
simply connected compact set.  Then $\widehat{\C} \setminus P^{-1}(\Omega)$ is 
open and connected, i.e., a region.
\end{theorem}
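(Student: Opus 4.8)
The plan is to reduce the statement to a connectivity claim about a branched covering and then force connectivity using the fact that the fibre over $\infty$ is a single, totally ramified point. First I would record the easy topological part: $P^{-1}(\Omega)$ is closed, being the preimage of the closed set $\Omega$ under the continuous map $P$, and it is bounded because $\abs{P(z)} \to \infty$ as $\abs{z} \to \infty$ while $\Omega$ is bounded. Hence $P^{-1}(\Omega)$ is compact, so $\cK \coloneq \widehat{\C} \setminus P^{-1}(\Omega)$ is open and contains $\infty$; this already yields the ``open'' claim, and once connectivity is shown we have a region. Extending $P$ to the rational map $\widehat{\C} \to \widehat{\C}$ with $P(\infty) = \infty$, we have $\cK = P^{-1}(\comp{\Omega})$, where $\comp{\Omega} = \widehat{\C} \setminus \Omega$ is connected because $\Omega$ is simply connected.

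For connectivity I would argue by contradiction. Suppose $\cK$ is disconnected and let $V$ be a connected component of $\cK$ not containing $\infty$; such a component exists since the open set $\cK$ is locally connected. Because $\partial V \subseteq P^{-1}(\Omega)$ and $\infty = P(\infty) \notin \Omega$, the point $\infty$ lies neither in $V$ nor in $\partial V$. Consequently $\overline{V}$ is a compact subset of $\C$, and in particular $P(\overline{V}) \subseteq \C$, so $\infty \notin P(V)$.

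The heart of the argument is to show that $P|_V \colon V \to \comp{\Omega}$ is proper. Given a compact set $K \subseteq \comp{\Omega}$, the set $P^{-1}(K) \cap V$ is bounded, since it sits inside $\overline{V}$, and it is closed: any limit point $z_*$ satisfies $P(z_*) \in K \subseteq \comp{\Omega}$, so $z_* \notin P^{-1}(\Omega) \supseteq \partial V$, which forces $z_* \in V$. Thus $P|_V$ is proper, hence a closed map into the locally compact Hausdorff space $\comp{\Omega}$, so $P(V)$ is closed in $\comp{\Omega}$; it is also open by the open mapping theorem, as $P$ is nonconstant and holomorphic on the connected open set $V$. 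Since $\comp{\Omega}$ is connected and $P(V) \neq \emptyset$, we conclude $P(V) = \comp{\Omega}$, whence $\infty \in P(V)$. This contradicts $\infty \notin P(V)$ from the previous step, so $\cK$ is connected.

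I expect the main obstacle to be the careful bookkeeping around $\infty$: one must keep the extended map and the two possible targets $\comp{\Omega}$ and $\C \setminus \Omega$ consistent, because the entire contradiction rests on the fibre $P^{-1}(\infty) = \{\infty\}$ being a single point that lies in the component containing $\infty$ but not in $V$. An equivalent way to package the final step, if one prefers, is to note that $P$ restricted to each component of $\cK$ is a proper branched covering onto $\comp{\Omega}$; the ramification of $P$ over $\infty$ is $n$ and is carried entirely by the component containing $\infty$, so the generic fibre over $\comp{\Omega}$ already has all $n$ sheets there, leaving none for $V$ and contradicting the surjectivity of $P|_V$. Either route comes down to the same core fact, that properness together with a connected target yields surjectivity.
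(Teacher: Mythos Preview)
Your proof is correct and follows essentially the same strategy as the paper's: assume a bounded component $V$ of $\cK$ exists, show that $P(V)$ is both open and closed in the connected set $\comp{\Omega}$, and obtain a contradiction from $\infty \in \comp{\Omega} = P(V)$. The only difference is in how the closedness step is packaged---you invoke properness of $P|_V$ to conclude that $P(V)$ is closed in $\comp{\Omega}$, whereas the paper argues directly via sequences that $\partial P(V) \subseteq \partial \Omega$; these are equivalent ways of establishing the same fact.
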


\begin{proof}
Clearly, $G \coloneq \widehat{\C} \setminus P^{-1}(\Omega) = 
P^{-1}(\widehat{\C} \setminus \Omega)$ is open and contains $\infty$.
Let $G_\infty \subseteq G$ be that component of $G$ that contains $\infty$.
Suppose that $G$ is not connected, i.e., $G \neq G_\infty$.  Then there exists 
another component $G_1 \subseteq G$, and $G_1$ is a bounded region.
Then $P(G_1)$ is a bounded region with $P(G_1) \subseteq \widehat{\C} \setminus 
\Omega$.

Next, we show that $\partial P(G_1) \subseteq \partial \Omega$.
Let $w \in \partial P(G_1)$.  Then there exists $w_k \in P(G_1)$ with $w_k \to 
w$.  For each $k$, there exists $z_k \in G_1$ with $P(z_k) = w_k$.
Since $G_1$ is bounded, the sequence $(z_k)_k$ has a convergent subsequence 
$(z_{k_j})_j$ with $z_{k_j} \to z \in \overline{G}_1$.
This implies that $P(z) = w$.
Since $w \in \partial P(G_1)$, we have $z \in \partial G_1$ (otherwise, $z \in 
G_1$ would imply $P(z) \in P(G_1)$ and, since $P(G_1)$ is open, $w = P(z) 
\notin \partial P(G_1)$).
Since $G$ is open, this implies that $z \notin G$ and hence that $z \in 
P^{-1}(\Omega)$ and $w = P(z) \in \Omega$.
Since $w_k \in P(G_1) \subseteq \widehat{\C} \setminus \Omega$ and $w_k \to w$, 
we obtain that $w \in \partial \Omega$.

We have shown that $P(G_1) \subseteq \widehat{\C} \setminus \Omega$ is a region 
with $\partial P(G_1) \subseteq \partial \Omega = \partial (\widehat{\C} 
\setminus \Omega)$.
Since $\widehat{\C} \setminus \Omega$ is connected, this implies that $P(G_1) = 
\widehat{\C} \setminus \Omega$, which contradicts that $P(G_1)$ is bounded.
This shows that $G = \widehat{\C} \setminus P^{-1}(\Omega)$ is connected.
\end{proof}

\small
\bibliographystyle{siam}
\bibliography{walshmap.bib}

\begin{thebibliography}{10}

\bibitem{Ahlfors1979}
{\sc L.~V. Ahlfors}, {\em Complex analysis: {A}n introduction of the theory of
  analytic functions of one complex variable}, third edition, McGraw-Hill Book
  Co., New York-Toronto-London, 1979.

\bibitem{Grunsky1957a}
{\sc H.~Grunsky}, {\em \"{U}ber konforme {A}bbildungen, die gewisse
  {G}ebietsfunktionen in elementare {F}unktionen transformieren. {I}}, Math.
  Z., 67 (1957), pp.~129--132.

\bibitem{Grunsky1957b}
\leavevmode\vrule height 2pt depth -1.6pt width 23pt, {\em \"{U}ber konforme
  {A}bbildungen, die gewisse {G}ebietsfunktionen in elementare {F}unktionen
  transformieren. {II}}, Math. Z., 67 (1957), pp.~223--228.

\bibitem{Grunsky1978}
\leavevmode\vrule height 2pt depth -1.6pt width 23pt, {\em Lectures on theory
  of functions in multiply connected domains}, Vandenhoeck \& Ruprecht,
  G\"{o}ttingen, 1978.
\newblock Studia Mathematica, Skript 4.

\bibitem{Jenkins1958}
{\sc J.~A. Jenkins}, {\em On a canonical conformal mapping of {J}. {L}.
  {W}alsh}, Trans. Amer. Math. Soc., 88 (1958), pp.~207--213.

\bibitem{Landau1961}
{\sc H.~J. Landau}, {\em On canonical conformal maps of multiply connected
  domains}, Trans. Amer. Math. Soc., 99 (1961), pp.~1--20.

\bibitem{Markushevich1965}
{\sc A.~I. Markushevich}, {\em Theory of functions of a complex variable.
  {V}ol. {I}}, Translated and edited by Richard A. Silverman, Prentice-Hall,
  Inc., Englewood Cliffs, N.J., 1965.

\bibitem{NasserLiesenSete2016}
{\sc M.~M.~S. Nasser, J.~Liesen, and O.~S\`ete}, {\em Numerical computation of
  the conformal map onto lemniscatic domains}, Comput. Methods Funct. Theory,
  16 (2016), pp.~609--635.

\bibitem{Ransford1995}
{\sc T.~Ransford}, {\em Potential theory in the complex plane}, vol.~28 of
  London Mathematical Society Student Texts, Cambridge University Press,
  Cambridge, 1995.

\bibitem{Schiefermayr2012}
{\sc K.~Schiefermayr}, {\em Geometric properties of inverse polynomial images},
  in Approximation theory {XIII}: {S}an {A}ntonio 2010, vol.~13 of Springer
  Proc. Math., Springer, New York, 2012, pp.~277--287.

\bibitem{Schiefermayr2014}
\leavevmode\vrule height 2pt depth -1.6pt width 23pt, {\em The
  {P}\'{o}lya-{C}hebotarev problem and inverse polynomial images}, Acta Math.
  Hungar., 142 (2014), pp.~80--94.

\bibitem{SeteLiesen2016}
{\sc O.~S\`ete and J.~Liesen}, {\em On conformal maps from multiply connected
  domains onto lemniscatic domains}, Electron. Trans. Numer. Anal., 45 (2016),
  pp.~1--15.

\bibitem{Walsh1956}
{\sc J.~L. Walsh}, {\em On the conformal mapping of multiply connected
  regions}, Trans. Amer. Math. Soc., 82 (1956), pp.~128--146.

\bibitem{Walsh1958}
\leavevmode\vrule height 2pt depth -1.6pt width 23pt, {\em A generalization of
  {F}aber's polynomials}, Math. Ann., 136 (1958), pp.~23--33.

\bibitem{Walsh1969}
\leavevmode\vrule height 2pt depth -1.6pt width 23pt, {\em Interpolation and
  approximation by rational functions in the complex domain}, Fifth edition.
  American Mathematical Society Colloquium Publications, Vol. XX, American
  Mathematical Society, Providence, R.I., 1969.

\bibitem{Wegert2012}
{\sc E.~Wegert}, {\em Visual complex functions}, Birkh\"{a}user/Springer Basel
  AG, Basel, 2012.

\end{thebibliography}

\end{document}